  \definecolor{lanse}{RGB}{0,0,255} 
\definecolor{zise}{RGB}{112,48,160} 
 \definecolor{hongse}{RGB}{200,0,0} 
\renewenvironment{proof}[1][\proofname]{%
   \par\pushQED{\qed}\normalfont%
   \topsep6\p@\@plus6\p@\relax
   \trivlist\item[\hskip\labelsep\bfseries#1\@addpunct{.}]%
   \ignorespaces
}{%
   \popQED\endtrivlist\@endpefalse
}
\numberwithin{equation}{section}
\newtheorem{theorem}{Theorem}
\newtheorem{proposition}[theorem]{Proposition}
 \numberwithin{theorem}{section}
 \newtheorem{corollary}[theorem]{Corollary}
\newtheorem{lemma}[theorem]{Lemma}
\newtheorem{remark}[theorem]{Remark}
\def\keywords{\xdef\@thefnmark{}\@footnotetext}
\renewcommand{\P}{\mathbb{P}}
\newcommand{\W}{\mathbb{W}}
\newcommand{\E}{\mathbb{E}}
\newcommand{\R}{\mathbb{R}}
\newcommand{\cH}{\mathbb{H}}
\newcommand{\cN}{\mathcal{N}}
\newcommand{\cF}{\mathcal F}
\newcommand{\cM}{\mathcal M}
\newcommand{\eps}{\varepsilon}
 \newcommand{\nn}{\nonumber}
 \newcommand{\no}{\noindent}
\newcommand{\Extra}[1]{{\color{blue}#1}}
\renewcommand{\Extra}[1]{}
\newcommand{\la}{\langle}
\newcommand{\ra}{\rangle}
\begin{document}

\keywords{\today}%
\keywords{{\bf AMS 2020  subject classification:}  60H15, 60G57, 60J80}%
\keywords{ {\bf Key words and phrases:} Superprocess, stochastic partial differential equation, random environment}%

\author{
Zeteng Fan$^*$ \quad Jieliang Hong$^\dagger$ \quad Jie Xiong$^{\star}$
}
\title{Some quenched and annealed limit theorems of superprocesses in random environments}

\date{{\small  {\it  
$^{*}$$^{\dagger}$$^{\star}$Department of Mathematics, Southern University of Science and Technology, \\Shenzhen, China\\
  $^{*}$E-mail:  {\tt fanzet@hotmail.com} \\
   $^\dagger$E-mail:  {\tt hongjl@sustech.edu.cn} \\
  $^{\star}$E-mail:  {\tt xiongj@sustech.edu.cn} 
  }
  }
  }
 
 \maketitle
\begin{abstract}
 Let $X=(X_t, t\geq 0)$ be a superprocess in a random environment described by a Gaussian noise $W=\{W(t,x), t\geq 0, x\in \R^d\}$ white in time and colored in space with correlation kernel  $g(x,y)$. When $d\geq 3$, under the condition that the correlation function $g(x,y)$ is bounded above by some appropriate function $\bar{g}(x-y)$, we present the quenched and annealed Strong Law of Large Numbers and the Central Limit Theorems regarding the weighted occupation measure $\int_0^t X_s ds$ as $t\to \infty$.  
\end{abstract}

\section{Introduction}\label{s1}

The Dawson-Watanabe superprocess, also called the super-Brownian motion (SBM), is a measure-valued branching process that arises as a scaling limit of the critical branching particle system (see, e.g., Watanabe \cite{Wat68} and Dawson \cite{Daw75}).  The super-Brownian motion has been shown to appear in various models such as the voter model \cite{CDP},  the contact process \cite{DP99}, the lattice trees \cite{DS}, \cite{Hol08}, the oriented percolation \cite{HS03}, and more recently, the SIR epidemic process \cite{Hong23}. In the classical setting of the SBM, all the underlying particles in the system move and branch independently of all other particles. The law of the SBM can be uniquely determined by two ingredients: the spatial motion and the branching mechanism. In this paper, we study a variant of the classical SBM--the superprocess in random environments proposed by Mytnik \cite{M96}. The model appears as the scaling limits of the branching particle system where the branching probabilities depend on the random environment. We refer the reader to Mytnik \cite{M96} for a detailed description of the approximating particle system. The associated martingale problem is stated below in \eqref{eMP}.

The well-known work by Iscoe \cite{Isc86} and \cite{Isc86b} investigate the asymptotic behavior of the occupation measure of the super-Brownian motion on $\R^d$, which is defined as follows: If $(X_t, t\geq 0)$ is some measure-valued process on $\R^d$, then the occupation measure $Y_t$ is given by
\begin{align} \label{aae}
Y_t(\cdot):=\int_0^t X_s(\cdot) ds.
\end{align}
For any integrable function $\phi$ and measure $\mu$, we write
\[
\la \mu, \phi\ra=\mu(\phi)=\int \phi(x) \mu(dx).
\]

 Let $\lambda$ be the Lebesgue measure on $\R^d$. Denote by $P_\lambda$  the law of the super-Brownian motion starting from $X_0=\lambda$.
When $d\geq 3$, Iscoe \cite{Isc86b} proves that with $P_\lambda$-probability one,
\begin{align} \label{aae1}
 \lim_{t\to \infty} t^{-1} \la Y_t, \phi\ra=\la \lambda, \phi\ra, \quad \forall \phi\in C_c(\R^d).
\end{align}
Here, we use $C_c(\R^d)$ to denote the space of continuous functions with compact supports. The above can be seen as a Law of Large Number (LLN) type convergence theorem. Moreover, Iscoe establishes in \cite{Isc86} the Central Limit Theorem (CLT) type results: For any $\phi\in C_c(\R^d)$, there are some $b_t>0$ and $\sigma(\phi)>0$ such that as $t\to \infty$,
\begin{align} \label{aae2}
\frac{ \la Y_t, \phi\ra-t\la \lambda, \phi\ra }{b_t } \Rightarrow \sigma(\phi) Z,
\end{align}
where the random variable $Z\sim \cN(0,1)$ is standard normal and we use $\Rightarrow$ to denote the weak convergence of random variables. The choice of the normalizing term $b_t$ depends much on the spatial dimension:
\begin{align*}
b_t=
\begin{cases}
t^{3/4}, \quad & d=3,\\
\sqrt{t\log t}, \quad & d=4,\\
t^{1/2}, \quad & d\geq 5.
\end{cases}
\end{align*}
 
In the lower dimensions $d\leq 2$, the asymptotic behaviors of $Y_t$  turn out to be rather different: When $d=1$, Iscoe \cite[Theorem 4.3]{Isc86} implies that $\lim_{t\to \infty}\la Y_t, \phi\ra <\infty$ for all $\phi\in C_c(\R)$ a.s.; when $d=2$, Iscoe \cite[Theorem 2]{Isc86b} shows that
\begin{align} \label{aae3}
 t^{-1} \la Y_t, \phi\ra  \Rightarrow \xi \cdot \la \lambda, \phi\ra,  
\end{align}
 where $\xi$ is some nontrivial, strictly positive, and infinitely divisible random variable.\\

Given the above limit theorems of the super-Brownian motion, we will study in the current manuscript the case when $X_t$ is a superprocess in random environments. Since there are two different sources of randomness (i.e., the superprocess and the random environment), one would expect the limiting behavior for $X_t$ or $Y_t$ to rely on the strength of the environment aside from the dependence on the spatial dimension. The dimensional dependence comes from the fact that there is plenty of space for the particles to diffuse in high dimensions so that the particles would be less correlated than in the lower dimensions. Similarly, we expect that if the random environment is ``weakly'' correlated, there ought to be LLN and CLT for the superprocess in random environments in high dimensions. These heuristics lead to our main results below.\\

To formally state our results, we provide the necessary notations and definitions. 
Denote by  $C_b^k(\R^d)$  the space of all bounded continuous functions on $\R^d$ with bounded continuous derivatives up to order $k$. 
Let $X=(X_t, t\geq 0)$ be a superprocess in a random environment with covariance function $g(x,y)$ defined on some complete filtered probability space $(\Omega, \cF, \cF_t, \P)$ such that $X$ satisfies the following martingale problem: 
\begin{align} \label{eMP}
(MP)_{X_0}: \quad &\text{ For any }  \phi \in C_b^2(\R^d), \ M_t(\phi)=X_t(\phi)-X_0(\phi)-\int_0^t X_s(\frac{\Delta}{2}\phi) ds\nn\\
&\text{ is  a continuous $(\cF_t)$-martingale with } \\
&\langle M(\phi)\rangle_t=\int_0^t X_s(\phi^2)ds+\int_0^t ds \int_{\R^d} \int_{\R^d} g(u,v) \phi(u)\phi(v) X_s(du) X_s(dv).\nn
\end{align}
We refer the reader to Mytnik \cite{M96} and Mytnik-Xiong \cite{MX07} for more information on the above martingale problem. Throughout the rest of the paper, we use $(X_t, t\geq 0)$ to denote the unique solution to the above martingale problem and let $Y_t$ be defined by \eqref{aae}. We assume $X_0=\lambda$ following Iscoe's setting. Denote by $\P_\lambda$ the law of the solution $X$ to $(MP)_{\lambda}$.\\

To study the long-time behavior of the superprocess in random environments,  we introduce the conditional martingale problem from \cite{MX07}. Let $S_0$ be the linear span of the set of functions $\{g(x,\cdot): x\in \R^d\}$. Define an inner product on $S_0$ by
\[
\la g(x,\cdot), g(y,\cdot)\ra_{\cH}=g(x,y),
\]
where $\cH$ is the completion of $S_0$ with respect to the norm $\|\cdot\|_{\cH}$ induced by $\la \cdot, \cdot\ra_{\cH}$. Let $\{e_i: i\geq 1\}$ be a complete orthonormal basis of $\cH$. Let $\{B^i: i\geq 1\}$ be a collection of independent linear Brownian motions defined on some probability space $(\Omega, \cF, \P)$. Set
\begin{align}  \label{aee2.1}
 W(t,x):=\sum_{i=1}^\infty \la g(x,\cdot), e_i \ra_{\cH} B^i(t), \quad \forall t\geq 0, x\in \R^d.
\end{align}
Notice that
\begin{align*}  
 \sum_{i=1}^\infty \la g(x,\cdot), e_i \ra_{\cH}^2 =\la g(x,\cdot), g(x,\cdot) \ra_{\cH}=g(x,x)<\infty.
\end{align*}
Hence, $W=\{W(t,x): t\geq 0, x\in \R^d\}$ is a Gaussian noise white in time and colored in space with
\begin{align}  \label{aee2.2}
 \E\Big[ W(t,x) W(s,y)\Big]=g(x,y) \cdot (s\wedge t).
\end{align}

Define 
\begin{align*} 
\cF_t^W:=\sigma\{W(s,x): s\leq t, x\in \R^d\}=\sigma\{B^i(s): s\leq t, i\geq 1\},
\end{align*}
and set
\begin{align*} 
\cF_\infty^W:=  \bigvee_{t\geq 0} \cF_t^W.
\end{align*}
 For an $\cF_t^W$-predictable function $f(s,x)$, we define the stochastic integral of $f$ with respect to $W$ by
\begin{align}  
 \int_0^t f(s,x) W(ds, x):=\sum_{i=1}^\infty  \int_0^t \la f(s,x) g(x,\cdot), e_i \ra_{\cH} dB_s^i.
\end{align}
By using the linearity of the inner product, one may obtain that
\begin{align}  \label{ea4.49}
 \int_0^t \int_{\R^d} f(s,x) W(ds, x) dx=\sum_{i=1}^\infty  \int_0^t \Big\la \int f(s,x) g(x,\cdot) dx, e_i \Big\ra_{\cH} dB_s^i.
\end{align}
In particular, the above is an $\cF_t^W$-martingale with the quadratic variation given by
\begin{align*}  
& \sum_{i=1}^\infty  \int_0^t \Big\la \int f(s,x) g(x,\cdot) dx, e_i \Big\ra_{\cH}^2 ds\nn\\
 &=\int_0^t \Big\la \int f(s,x) g(x,\cdot) dx, \int f(s,y) g(y,\cdot) dy \Big\ra_{\cH}  ds\nn\\
 &=\int_0^t  \int  \int f(s,x)  f(s,y) g(x,y)  dx dy ds.
\end{align*}
Hence, we may rewrite \eqref{ea4.49} using the notation from the display above (2.2) of \cite{MX07}:
\begin{align}  \label{ea4.41}
 \int_0^t \int_{\R^d} f(s,x) W(ds, x) dx= \int_0^t \Big\la \int f(s,x) g(x,\cdot) dx, d\W_s \Big\ra_{\cH},
\end{align}
where $\W$ is some $\cH$-cylindrical Brownian motion.\\

By rewriting the last term of (2.2) in \cite{MX07} by using \eqref{ea4.41}, we conclude from Lemma 2.3 of \cite{MX07} that if $X$ satisfies the following conditional martingale problem:
\begin{align} \label{eCMP}
(CMP)_{X_0}: \quad &\text{ For any }  \phi \in C_b^2(\R^d), \ N_t(\phi)=X_t(\phi)-X_0(\phi)-\int_0^t X_s(\frac{\Delta}{2}\phi) ds\\
&-\int_0^t \int \phi(x)W(ds, x) X_s(dx) \text{ is  a continuous $(\cF_t \vee \cF_{\infty}^W)$-martingale}\nn  \\
&\text{  with } \langle N(\phi)\rangle_t=\int_0^t X_s(\phi^2)ds,\nn
\end{align}
then $X$ is a solution to $(MP)_{X_0}$.\\

We collect all the known results for $X_t$ and $Y_t$. It is always assumed that $$\|g\|_\infty=\sup_{x,y\in \R^d} |g(x,y)|<\infty.$$ Fix any $\phi \in C_c(\R^d)$. Theorems 1.1-1.3 of \cite{MX07} show that when $d=1$, there is a finite random time $\tau>0$ such that with $\P_\lambda$-probability one,  
\begin{align} \label{aee1.1}
\la X_t, \phi \ra=0, \quad \forall t\geq \tau \quad  \Longrightarrow \quad \limsup_{t\to \infty} \  \la Y_t, \phi \ra<\infty.
\end{align}
When $d=2$, we have
\begin{align} \label{aee1.2}
\lim_{t\to\infty} \la X_t, \phi \ra=0 \text{ in probability.}
\end{align}
For any $d\geq 2$, if there are some constant $0<c_1<c_2$ and $0<\alpha<2$ such that
 \begin{align}\label{aae0.0}
 c_1 (|x-y|^{-\alpha} \wedge 1)\leq g(x,y) \leq c_2, \quad \forall x,y\in\R^d,
\end{align}
then \eqref{aee1.1} holds; if $\alpha=2$, then \eqref{aee1.2} holds.\\

Turning to the case when $d\geq 3$ and $\alpha>2$, in an unpublished note, Chen-Ren-Zhao \cite{CRZ15} proves that if there is some $\eps>0$ small such that
 \begin{align}\label{e0.0}
g(x,y) \leq \eps (|x-y|^{-\alpha} \wedge 1), \quad \forall x,y\in\R^d,
\end{align}
then $X_t$ converges weakly to some random measure $X_\infty$ with $\E_\lambda \la X_\infty, \phi \ra=\la \lambda, \phi \ra$. On the other hand,  if there is some constant $C>0$ large such that $g(x,y)=g(x-y)$ with $g\in C^2(\R^d)$ and $g(0) \geq C$, then \eqref{aee1.1} holds.\\

From now on, we set $d\geq 3$ and $\alpha>2$. We will only consider the case when the random environment is ``weak'' enough in the sense that \eqref{e0.0} holds. The following lemma justifies our choice of \eqref{e0.0}.
Let $B_t, \tilde{B}_t$ be independent $d$-dimensional Brownian motion starting respectively from $x,y\in \R^d$ under $\Pi_{(x,y)}$.

\begin{lemma}\label{l1.1}
For any $\alpha>2$, $q>0$ and $d\geq 3$, there is some constant $c=c(\alpha, d, q)>0$ such that if $\eps\leq c$, then
\begin{align} \label{e1}
\sup_{x,y\in \R^d} \Pi_{(x,y)} \Big(e^{q\int_0^\infty   g(B_s, \widetilde{B}_s) ds} \Big)\leq 2
\end{align}
holds for all function $g(x,y)$ such that $g(x,y) \leq \eps (|x-y|^{-\alpha} \wedge 1)$.
\end{lemma}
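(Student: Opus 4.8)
The plan is to reduce the exponential moment bound to a uniform control of the Green-type functional $G(x,y) := \Pi_{(x,y)}\big(\int_0^\infty g(B_s,\widetilde B_s)\,ds\big)$ and then invoke a Khasminskii-type argument. First I would observe that since $B_s - \widetilde B_s$ is (up to a time change by a factor $2$) a $d$-dimensional Brownian motion started from $x-y$, the bound $g(x,y)\le \eps(|x-y|^{-\alpha}\wedge 1)$ gives
\[
G(x,y)\le \eps\,\Pi_{x-y}\Big(\int_0^\infty \big(|\beta_s|^{-\alpha}\wedge 1\big)\,ds\Big),
\]
where $\beta$ is a $d$-dimensional Brownian motion. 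Because $d\ge 3$ the process $\beta$ is transient, and since $\alpha>2$ the function $z\mapsto |z|^{-\alpha}\wedge 1$ is integrable against the Green's function $c_d|z|^{2-d}$ at infinity (the tail integral $\int_{|z|\ge 1}|z|^{-\alpha}|z|^{2-d}\,dz$ converges precisely when $\alpha>2$) and the $|z|^{-\alpha}$ singularity at the origin is integrable against $|z|^{2-d}$ when $\alpha<d$; for $\alpha\ge d$ one instead uses the trivial bound $|z|^{-\alpha}\wedge 1\le 1$ near $0$ together with the fact that $\beta$ spends only finite expected time in a bounded ball. Either way, $\sup_{z}\Pi_z\big(\int_0^\infty(|\beta_s|^{-\alpha}\wedge 1)\,ds\big)=:K(\alpha,d)<\infty$, so $\|G\|_\infty\le \eps K$.

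Next I would run the standard iteration behind Khasminskii's lemma. Writing $A=\int_0^\infty g(B_s,\widetilde B_s)\,ds$ and using the Markov property at each successive "layer," one shows by induction that
\[
\Pi_{(x,y)}\big(A^n\big)\le n!\,\|G\|_\infty^n,
\]
since conditioning on the path up to an intermediate time and applying the Markov property bounds the $n$-fold time-ordered integral by $\|G\|_\infty$ times the $(n-1)$-fold one. Summing the series,
\[
\Pi_{(x,y)}\big(e^{qA}\big)=\sum_{n\ge 0}\frac{q^n}{n!}\Pi_{(x,y)}\big(A^n\big)\le \sum_{n\ge 0}(q\|G\|_\infty)^n=\frac{1}{1-q\|G\|_\infty}
\]
provided $q\|G\|_\infty<1$. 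Choosing $c=c(\alpha,d,q)$ so that $\eps\le c$ forces $q\eps K\le \tfrac12$, we get the uniform bound $\le 2$, which is exactly \eqref{e1}.

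The main obstacle — really the only nontrivial analytic point — is establishing that the Green functional $K(\alpha,d)=\sup_z\Pi_z\big(\int_0^\infty(|\beta_s|^{-\alpha}\wedge 1)\,ds\big)$ is finite and, more importantly, bounded \emph{uniformly in the starting point} $z$. Finiteness away from the origin is clear from transience; the delicate regime is $z$ near $0$, where one must check that the expected occupation time of the unit ball, weighted by the possibly-singular kernel $|\cdot|^{-\alpha}$, does not blow up. I would handle this by splitting $\int_0^\infty = \int_{\{|\beta_s|\le 1\}} + \int_{\{|\beta_s|>1\}}$: the second piece is bounded by $\Pi_z(\text{total time in }\{|\beta|>1\}$ with weight $1)$, which by transience is at most the Green's mass of $\{|\beta|>1\}$ and is finite uniformly in $z$; the first piece is bounded by $\int_{|w|\le 1} g_d(z,w)|w|^{-\alpha}\,dw$ where $g_d(z,w)=c_d|z-w|^{2-d}$, and a direct estimate (convolution of two locally integrable radial functions, using $\alpha<d$ when applicable, or the crude bound $|w|^{-\alpha}\wedge 1\le 1$ otherwise) shows this is bounded by a constant independent of $z$. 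Once $K(\alpha,d)<\infty$ is in hand the rest is the routine Khasminskii summation above; one should take a little care to phrase the time-change $B_s-\widetilde B_s \stackrel{d}{=}\sqrt 2\,\beta_s$ correctly, which only changes $K$ by a harmless dimensional constant.
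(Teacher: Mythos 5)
Your proposal is correct and follows essentially the same route as the paper: reduce to a single $d$-dimensional Brownian motion $\beta = B-\widetilde B$, bound the Green potential of $\bar g(z)=\eps(|z|^{-\alpha}\wedge 1)$ uniformly in the starting point using transience and $\alpha>2$, and then apply a Khasminskii-type argument to pass from a small first moment to the exponential moment. The only cosmetic differences are that you re-derive the Khasminskii iteration $\Pi(A^n)\le n!\|G\|_\infty^n$ by hand whereas the paper cites Lemma 2.1 of Ying, and you bound $\int g_d(z,w)\bar g(w)\,dw$ by a direct convolution estimate whereas the paper invokes Sugitani's rearrangement lemma to reduce to $z=0$.
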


The constant $2$ on the right-hand side of \eqref{e1} could be replaced by any constant larger than $1$; we choose $2$ for simplicity.
The proof of Lemma \ref{l1.1} deviates from our discussions of the superprocess, so we defer it to Appendix \ref{a1}.
In what follows, we will pick $q>0$ depending only on $d$ and $\alpha$. Hence, we may choose $\eps=\eps(d,\alpha)>0$ to be small such that \eqref{e1} holds.  \\

Now, we state our main results. Denote by $\P_{\lambda}^W$ the law of $\P_{\lambda}$ conditioning on $\cF_\infty^W$.

\begin{theorem}[Quenched LLN] \label{t1}
Let $d\geq 3$ and $\alpha>2$. There is some constant $c=c(\alpha, d)>0$ such that if $\eps\leq c$ and the covariance function $g$ satisfies \eqref{e0.0}, then for $\P$-a.s. environment $W$, we have $\P_{\lambda}^W$-a.s. that
\begin{align*} 
\lim_{T\to \infty} {T}^{-1} Y_T(\phi)   =\la \lambda, \phi\ra
\end{align*}
holds for all $\phi \in C_c(\R^d)$.
\end{theorem}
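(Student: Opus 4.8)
The plan is to prove the quenched Law of Large Numbers by a first-moment/second-moment (or $L^2$) argument carried out \emph{conditionally} on the environment $W$, combined with a Borel--Cantelli argument to upgrade convergence along a subsequence to a.s.\ convergence. First I would establish the conditional mean and variance formulas. Under $(CMP)_\lambda$, for fixed $\phi\in C_c(\R^d)$ the process $N_t(\phi)$ is an $(\cF_t\vee\cF_\infty^W)$-martingale, so taking $\P_\lambda^W$-expectations in the conditional martingale problem gives $\E_\lambda^W[X_t(\phi)] = \lambda(P_t\phi) + \E_\lambda^W\big[\int_0^t\!\int \phi(x)\,W(ds,x)\,X_s(dx)\big]$, where $P_t$ is the Brownian semigroup. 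Because $X_0=\lambda$ is the invariant measure for $P_t$, one has $\lambda(P_t\phi)=\lambda(\phi)$; the subtlety is the stochastic-integral term, which does \emph{not} vanish under $\P_\lambda^W$ (conditioning on $W$ removes the martingale property of the $W$-integral). The right way to organize this is to work with the conditional Laplace/log-Laplace functional: conditionally on $W$, $X$ is an (inhomogeneous, environment-dependent) superprocess, and $\E_\lambda^W[\exp(-\theta X_t(\phi))]$ is governed by a random PDE, namely the mild equation $v(t,x) = \theta P_t\phi(x) - \tfrac12\int_0^t P_{t-s}(v(s,\cdot)^2)(x)\,ds + \int_0^t P_{t-s}\big(v(s,x)\,W(ds,x)\big)$ (the environment enters linearly). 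Differentiating at $\theta=0$ yields $\E_\lambda^W[X_t(\phi)]$ as the solution of the \emph{linear} random equation $u(t,x) = P_t\phi(x) + \int_0^t P_{t-s}\big(u(s,x)\,W(ds,x)\big)$.

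The key analytic input is Lemma~\ref{l1.1}: the linear random equation above can be solved by a Feynman--Kac / chaos expansion, and its solution admits the representation $u(t,x) = \Pi_x\big[\phi(B_t)\,\mathcal E_t\big]$ where $\mathcal E_t$ is a suitable stochastic exponential of the $W$-integral along the Brownian path; bounding its moments is exactly what \eqref{e1} (applied with the two-particle motion $(B,\widetilde B)$) controls. Concretely, I would compute the \emph{conditional second moment} $\E_\lambda^W[Y_T(\phi)^2]$ via the standard moment formula for superprocesses, which expresses it in terms of the one- and two-particle motions weighted by the environment exponential $\exp\big(\int_0^\cdot g(B_s,\widetilde B_s)\,ds\big)$ coming from the branching-covariance term in $(MP)$; the bound $\|g\|_\infty<\infty$ together with Lemma~\ref{l1.1} gives uniform-in-$x,y$ control of these exponential functionals. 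The transience of $d$-dimensional Brownian motion for $d\ge3$ (so that $\int_0^\infty$ of the Green's-function-type kernels converges) is what makes the variance grow slower than $T^2$: one expects $\E_\lambda^W[Y_T(\phi)^2] - (\E_\lambda^W[Y_T(\phi)])^2 = O(T^{2-\delta})$ for some $\delta>0$ (indeed $O(T)$ for $d\ge5$, with logarithmic/fractional corrections for $d=3,4$, matching Iscoe's rates $b_t$). Likewise I would check that $\E_\lambda^W[Y_T(\phi)] = T\lambda(\phi) + o(T)$ for $\P$-a.e.\ $W$, using the decay of $u(s,\cdot)$ and an ergodic-type averaging of the environment term over $s\in[0,T]$.

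Granting the two estimates $\E_\lambda^W[T^{-1}Y_T(\phi)] \to \lambda(\phi)$ and $\mathrm{Var}_\lambda^W(T^{-1}Y_T(\phi)) = O(T^{-\delta})$ for $\P$-a.e.\ $W$, Chebyshev's inequality gives, for any $\eta>0$, $\P_\lambda^W\big(|T^{-1}Y_T(\phi)-\lambda(\phi)|>\eta\big) \le C(W)\,T^{-\delta}$. Choosing the subsequence $T_n = n^{2/\delta}$ (so $\sum_n T_n^{-\delta}<\infty$) and applying Borel--Cantelli yields $T_n^{-1}Y_{T_n}(\phi)\to\lambda(\phi)$ $\P_\lambda^W$-a.s.; monotonicity of $T\mapsto Y_T(\phi)$ (since $\phi$ may be taken $\ge0$ by splitting $\phi=\phi^+-\phi^-$) lets me sandwich $T\in[T_n,T_{n+1}]$ between $T_n^{-1}Y_{T_n}(\phi)$ and $T_{n+1}^{-1}Y_{T_{n+1}}(\phi)$, whose ratio $T_{n+1}/T_n\to1$, giving the full limit. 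Finally, since $C_c(\R^d)$ is separable, a single $\P$-null set and a single $\P_\lambda^W$-null set handle a countable dense family, and a density/monotone-class argument extends to all $\phi\in C_c(\R^d)$.

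I expect the main obstacle to be the \emph{conditional} moment analysis: unlike the classical SBM, conditioning on $W$ destroys the martingale structure of the environment integral, so both the first- and second-moment computations involve random (environment-dependent) linear PDEs, and one must show the relevant Brownian functionals of the environment are $\P$-a.s.\ finite with the right growth. Lemma~\ref{l1.1} is precisely the tool that tames the exponential moments of $\int_0^\infty g(B_s,\widetilde B_s)\,ds$ uniformly, but translating that uniform bound into an $o(T)$ bias and an $o(T^2)$ variance — in particular extracting the correct dimension-dependent rate and ensuring the bounds hold for $\P$-a.e.\ fixed $W$ rather than merely in $\P$-expectation — is the delicate part of the argument.
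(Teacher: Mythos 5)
Your skeleton---conditional Chebyshev, Borel--Cantelli along a sparse subsequence, monotonicity of $T\mapsto Y_T(\phi)$ to fill in between subsequence times, and a countable determining family in $C_c(\R^d)$---is the right one and matches the paper's, and you correctly identify Lemma~\ref{l1.1} and the conditional Laplace transform (yielding $V_1^\phi$ as the quenched mean) as the key analytic inputs. The gap is in how you plan to feed into Borel--Cantelli: you propose to establish, for $\P$-a.e.\ fixed $W$, that the quenched bias $\E_\lambda^W[T^{-1}Y_T(\phi)]-\la\lambda,\phi\ra$ decays and the quenched variance is $O(T^{-\delta})$, and you concede in your closing paragraph that you do not see how to promote the $\P$-expectation bounds to $\P$-a.s.\ bounds with a uniform-in-$T$ rate. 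That is indeed the hard part of your plan---it would amount to an a.s.\ bound on $\sup_T \E_\lambda^W[(Y_T(\phi)-T\la\lambda,\phi\ra)^2]/T^{2-\delta}$, which is stronger than anything the moment formulas directly give---and it is in fact unnecessary.

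The paper sidesteps it entirely: Lemma~\ref{al2.1} is a purely \emph{annealed} bound $\E_\lambda[(Y_T(\phi)-T\la\lambda,\phi\ra)^2]\le CT^{2-\delta}$. One applies conditional Chebyshev and then immediately takes $\E$ over the environment, so Tonelli and the tower property give
\[
\E\Big[\sum_{n}\P_{\lambda}^W\big(|r^{-n}Y_{r^n}(\phi)-\la\lambda,\phi\ra|>K^{-1}\big)\Big]\le K^2\sum_n r^{-2n}\,\E_\lambda\big[(Y_{r^n}(\phi)-r^n\la\lambda,\phi\ra)^2\big]\le CK^2\sum_n r^{-\delta n}<\infty,
\]
and finiteness of the $\E$-expectation of a nonnegative series forces the series itself to be finite for $\P$-a.e.\ $W$, which is exactly what Borel--Cantelli needs. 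No $\P$-a.s.\ quenched moment estimates, and no separate bias-versus-variance decomposition, are required: a single annealed second moment centered at the deterministic $T\la\lambda,\phi\ra$ handles both at once. (A further small issue with your setup: for the Chebyshev inequality $\P_\lambda^W(|T^{-1}Y_T(\phi)-\la\lambda,\phi\ra|>\eta)\le C(W)T^{-\delta}$ to hold, you would need the quenched bias to decay at a \emph{polynomial} rate, not merely $o(T)$; again the annealed-plus-Tonelli route makes this moot.) You should rewrite Steps A and B as the single annealed estimate of Lemma~\ref{al2.1}, and move the conditioning inside the Borel--Cantelli sum as above.
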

The proof of Theorem \ref{t1} will be given in Section \ref{as2}.
 Given the above, the following annealed version is immediate.

\begin{corollary}[Annealed LLN]\label{c1.1}
Let $d\geq 3$. For any $\alpha>2$, with $\P_{\lambda}$-probability one, we have
\begin{align*} 
\lim_{T\to \infty} {T}^{-1} Y_T(\phi)   =\la \lambda, \phi\ra
\end{align*}
holds for all $\phi \in C_c(\R^d)$.
\end{corollary}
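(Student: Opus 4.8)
The plan is to obtain Corollary~\ref{c1.1} from the quenched statement Theorem~\ref{t1} by conditioning on the environment and using the tower property; beyond Theorem~\ref{t1} itself no analytic input is needed. Throughout I keep the standing hypotheses of Theorem~\ref{t1}, namely $d\ge 3$, $\alpha>2$, and $g$ satisfying \eqref{e0.0} with $\eps\le c(\alpha,d)$ small enough.

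First I would introduce the event
\[
A:=\Big\{\lim_{T\to\infty}T^{-1}Y_T(\phi)=\la\lambda,\phi\ra\ \text{ for all }\phi\in C_c(\R^d)\Big\},
\]
which is exactly the conclusion of Theorem~\ref{t1}. One checks $A$ is a genuine measurable event by reducing the quantifier ``for all $\phi$'' to a countable subfamily $\{\phi_k\}\subset C_c(\R^d)$ that is dense for uniform convergence on compacts and dominates indicators of compact sets: using linearity of $\phi\mapsto Y_T(\phi)$ and the local finiteness of $Y_T$ one sees that $A$ coincides with the countable intersection $\bigcap_k\{\lim_{T\to\infty}T^{-1}Y_T(\phi_k)=\la\lambda,\phi_k\ra\}$, each factor being defined through limits of the measurable maps $T\mapsto Y_T(\phi_k)$. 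With this notation, Theorem~\ref{t1} says that $\P_\lambda^W(A)=1$ for $\P$-almost every environment $W$.

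It then remains to integrate out the environment. Since $\P_\lambda^W$ is, by definition, a regular conditional probability of $\P_\lambda$ given $\cF_\infty^W$, and since the law of $W$ is the same under $\P$ and under $\P_\lambda$ (the environment is built from Brownian motions independent of the initial measure $X_0=\lambda$), so that ``$\P$-a.s.\ in $W$'' and ``$\P_\lambda$-a.s.\ in $W$'' coincide, the tower property gives
\[
\P_\lambda(A)=\E_\lambda\big[\,\P_\lambda(A\mid\cF_\infty^W)\,\big]=\E_\lambda\big[\,\P_\lambda^W(A)\,\big]=\E_\lambda[1]=1,
\]
where the third equality uses Theorem~\ref{t1}; this is the assertion of Corollary~\ref{c1.1}. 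I do not expect any real obstacle here: apart from the routine measurability of $A$ and the immediate identification of the $W$-marginal of $\P_\lambda$, all of the substance sits in Theorem~\ref{t1}, which is the genuinely hard step.
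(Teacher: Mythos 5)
Your proposal is correct and takes essentially the same route as the paper: both proofs integrate out the environment via the tower property, using that $\P_\lambda^W$ is a regular conditional probability given $\cF_\infty^W$, and both handle the universal quantifier over $\phi\in C_c(\R^d)$ by passing to a countable determining family. The only cosmetic difference is order of operations (the paper applies the tower identity to the complement-event per fixed $\phi$ and then takes a countable union of null sets, whereas you first exhibit the full event $A$ as a countable intersection and then apply the tower property once); you also spell out the helpful observation that the $W$-marginals of $\P$ and $\P_\lambda$ coincide, which the paper leaves implicit.
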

\begin{proof}
For any $\phi \in C_c(\R^d)$, we get
\begin{align*} 
&\P_{\lambda}\Big(\lim_{T\to \infty} {T}^{-1} Y_T(\phi) \neq \la \lambda, \phi\ra\Big)\\
=&\P\Big[\P_{\lambda}^W\Big(\lim_{T\to \infty} {T}^{-1} Y_T(\phi) \neq \la \lambda, \phi\ra\Big)\Big]=0.
\end{align*}
The proof is complete by considering a countable determining class of $C_c(\R^d)$.
\end{proof}

\begin{remark}
The conclusions in Theorem \ref{t1} and Corollary \ref{c1.1} indeed imply the convergence of the measures ${T}^{-1} Y_T$ to $\lambda$ as $T\to \infty$ in the vague topology.
\end{remark}

Now that we have established the strong law of large numbers regarding $Y_T$, we will prove the associated central limit theorems. 

Let $\cM(\R^d)$ be the set of Radon measures on $\R^d$.
The following conditional Laplace transform is an easy consequence of \cite[Theorem 2.15]{MX07}:  For any $\mu \in \cM(\R^d)$ and any  $\phi, f \in C_c^+(\R^d)$, we have
\begin{align} 
 \E_\mu^W (e^{-X_t(f)- Y_t(\phi)  })= e^{-\la \mu, U^{f, \phi}(t,\cdot)\ra},
\end{align}
where $\E_\mu^{W}$ is the conditional expectation of $\E_\mu$ given ${W}$ and $U^{f, \phi}\geq 0$ is the solution to the following SPDE: 
\begin{align*} 
 \frac{\partial}{\partial t}U^{f, \phi}  (t,x)&= \phi(x)+  \frac{\Delta}{2} U^{\phi}(t,x)  - \frac{1}{2}  U^{\phi}(t,x)^2 +U^{\phi}(t,x)  \dot{W}(t,x), \\
U^{f, \phi}  (0,x)&=f(x).
\end{align*}
The derivation of the above from \cite[Theorem 2.15]{MX07} follows in a similar way to that of  Theorem 3.1 in \cite{Isc86} by using the Markov property of $X$. We also refer the reader to the proof of \cite[Theorem 2.18]{MX07} for a similar application. Now we set $U^{\phi}  (t,x)=U^{0, \phi}  (t,x)$ such that
\begin{align}\label{ed1.1}
 \E_\mu^W (e^{- Y_t(\phi)  })= e^{-\la \mu, U^{\phi}(t,\cdot)\ra},
\end{align}
where $U^{ \phi}\geq 0$ is the solution to the following SPDE: 
\begin{align}\label{ed10.3}
 U^{\phi}(t,x)=t\phi(x)+\int_0^t \frac{\Delta}{2} U^{\phi}(s,x) ds- \frac{1}{2} \int_0^t U^{\phi}(s,x)^2 ds  +\int_0^t U^{\phi}(s,x)  {W}(ds,x).
\end{align}

Set
\begin{align}\label{ae10.21}
p_t^x(y)=p_t(x,y)= \frac{1}{(2\pi t)^{d/2}} e^{-|y-x|^2/(2t)}
\end{align}
to be the transition density of the $d$-dimensional Brownian motion.
For any function $f$, define 
\begin{align}\label{ae10.99}
P_t f(x)=\int_{\R^d} p_t(x,y) f(y) dy, \quad \text{ and } \quad Q_t f(x)=\int_0^t P_s f(x) ds.
\end{align}
We may rewrite \eqref{ed10.3} as
\begin{align}\label{ea10.3}
U^{\phi}(t,x)=Q_t \phi(x)&- \frac{1}{2}\int_0^t ds \int p_{t-s}(x,y)  U^{\phi}(s,y)^2  dy\nn\\
&+\int_0^t ds \int p_{t-s}(x,y) U^{\phi}(s,y) \dot{W}(s,y) dy.
\end{align}
Define $V_1^{\phi}(t,x)$ by
\begin{align*} 
V_1^{\phi}(t, x)= \frac{\partial}{\partial \theta} U^{\theta \phi}(t,x)|_{\theta=0}.
\end{align*}
By differentiating \eqref{ed10.3} or \eqref{ea10.3} with respect to $\theta$ and noticing that $U^{0 \phi}(t,x)\equiv 0$ , we conclude that $V_1^{\phi}$ is the solution to the following SPDE: 
\begin{align}\label{ae10.33}
V_1^{\phi}(t,x)=t\phi(x)+\int_0^t \frac{\Delta}{2} V_1^{\phi}(s,x) ds   +\int_0^t V_1^{\phi}(s,x)  {W}(ds,x),
\end{align}
or equivalently,
\begin{align}\label{ae1.33}
V_1^{\phi}(t,x)=Q_t \phi(x)+\int_0^t ds \int p_{t-s}(x,y) V_1^{\phi}(s,y) \dot{W}(s,y) dy.
\end{align}
Replace $\phi$ in \eqref{ed1.1} by $\theta \phi$, differentiate $\theta$ and then let $\theta=0$ to obtain that 
\begin{align} \label{ea6.2}
 \E_\mu^W \Big( Y_t(\phi)   \Big)= \la \mu, V_1^{\phi}(t)\ra,
\end{align}
and hence
\begin{align*} 
 \E_\mu \Big( Y_t(\phi)   \Big)= \E \Big[ \E_\mu^W \Big( Y_t(\phi)   \Big)   \Big]= \la \mu, Q_t \phi(\cdot)\ra.
\end{align*}

Let $Z$ be a standard normal random variable defined on the same probability space $(\Omega, \cF, \P)$ such that $Z$ is independent of $W$. Denote by $\P^W$  the law of $\P$ conditional on $\cF_\infty^W$.

\begin{theorem}[Quenched CLT] \label{t2}
 Let $d\geq 5$ and $\alpha>2$. For any $\phi \in C_c^+(\R^d)$ and any sequence $T_n \to \infty$, there is a subsequence $T_{n_k} \to \infty$ such that for $\P$-a.s. environment $W$,  we have
\begin{align} 
\P^W\Big[T_{n_k}^{-1/2} \Big(Y_{T_{n_k}}(\phi)   -\la \lambda, V_1^{\phi}({T_{n_k}})\ra\Big) \in \cdot  \Big] \Rightarrow \P^W\Big(\sigma(W, \phi) Z \in \cdot \Big),
\end{align}
where $\sigma(W, \phi) \in (0,\infty)$ is defined by
\begin{align} \label{aee3.1}
 \sigma(W, \phi)^2:=\lim_{t\to \infty} \int \Big(V_1^\phi(t,x) \Big)^2 dx.
\end{align}
 
\end{theorem}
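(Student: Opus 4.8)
We outline the argument; the method is a conditional expansion in cumulants.

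\medskip

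\noindent\textbf{Reduction to cumulant asymptotics.}
Condition on the environment $W$. Replacing $\phi$ by $\theta\phi$ in \eqref{ed1.1} gives $\E_\lambda^W\bigl(e^{-\theta Y_t(\phi)}\bigr)=e^{-\la\lambda,\,U^{\theta\phi}(t,\cdot)\ra}$ for $\theta\ge 0$. Every moment $\E_\lambda\bigl(Y_t(\phi)^m\bigr)$ is finite --- this is a consequence of the moment formula for $X$ together with \cref{l1.1}, which is precisely what keeps the moment recursion from exploding under \eqref{e0.0} --- so $\theta\mapsto\E_\lambda^W\bigl(e^{-\theta Y_t(\phi)}\bigr)$ is smooth on $[0,\infty)$ and $U^{\theta\phi}(t,x)=\sum_{k\ge 1}\frac{\theta^k}{k!}\,V_k^{\phi}(t,x)$ with $V_k^{\phi}(t,x)=\partial_\theta^k U^{\theta\phi}(t,x)\big|_{\theta=0}$. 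Matching with the cumulant generating function, the $k$-th cumulant of $Y_t(\phi)$ under $\P_\lambda^W$ is $\kappa_k(W,t)=(-1)^{k-1}\la\lambda,V_k^{\phi}(t)\ra$; in particular $\kappa_1(W,t)=\la\lambda,V_1^{\phi}(t)\ra$ is the centering term in the theorem and $\kappa_2(W,t)=-\la\lambda,V_2^{\phi}(t)\ra$ is the conditional variance. Differentiating \eqref{ed10.3} repeatedly at $\theta=0$ and using $U^{0}\equiv0$, each $V_k^{\phi}$ solves a linear SPDE driven by $W$: $V_1^{\phi}$ is the solution of \eqref{ae10.33}--\eqref{ae1.33}, while for $k\ge2$
\begin{align*}
V_k^{\phi}(t,x)=-\frac12\int_0^t\!\!\int p_{t-u}(x,y)\!\!\sum_{\substack{i+j=k\\ i,j\ge1}}\!\!\binom{k}{i}V_i^{\phi}(u,y)V_j^{\phi}(u,y)\,dy\,du+\int_0^t\!\!\int p_{t-u}(x,y)V_k^{\phi}(u,y)\,W(du,y)\,dy.
\end{align*}
Since the $\cN(0,\sigma(W,\phi)^2)$ law is moment-determinate, it suffices to show, for $\P$-a.e.\ $W$ along a subsequence of $\{T_n\}$, that $\kappa_2(W,T)/T\to\sigma(W,\phi)^2\in(0,\infty)$ and $\kappa_k(W,T)/T^{k/2}\to 0$ for every $k\ge3$; then all conditional cumulants, hence all conditional moments, of $T^{-1/2}\bigl(Y_T(\phi)-\kappa_1(W,T)\bigr)$ converge to those of $\cN(0,\sigma(W,\phi)^2)$, which gives the claimed weak convergence. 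The subsequence is needed only to promote a few $L^1(\P)$-convergences to $\P$-a.s.\ ones, and a diagonal extraction in $k$ handles the countably many conditions at once.

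\medskip

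\noindent\textbf{The variance.}
Integrating \eqref{ae1.33} and the $V_2^{\phi}$-equation against $\lambda$ and using $\int p_r(x,\cdot)\,dx\equiv1$ gives
\begin{align*}
\kappa_1(W,T)=T\la\lambda,\phi\ra+\int_0^T\!\!\int V_1^{\phi}(u,y)\,W(du,y)\,dy,\qquad
\kappa_2(W,T)=\int_0^T\bigl\|V_1^{\phi}(u)\bigr\|_{L^2(\R^d)}^2\,du+N_T,
\end{align*}
where $N_T:=\int_0^T\!\int\bigl(-V_2^{\phi}(u,y)\bigr)\,W(du,y)\,dy$ is a continuous $(\cF_t^W)$-martingale (note that the CLT centering differs from the law-of-large-numbers centering $T\la\lambda,\phi\ra$ by a martingale of order $T^{1/2}$, which is exactly why one must center by $\kappa_1(W,T)$). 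Using \eqref{e0.0}/\cref{l1.1} to close the second- and fourth-moment recursions for \eqref{ae1.33}, one establishes the uniform bounds $\sup_t\E\|V_1^{\phi}(t)\|_{L^2}^2<\infty$, $\sup_t\E\|V_1^{\phi}(t)\|_{L^2}^4<\infty$, and that $V_1^{\phi}(t)$ is Cauchy in $L^2(\P\times\lambda)$ as $t\to\infty$. Here $d\ge5$ enters decisively: $Q_t\phi\uparrow\int_0^\infty P_s\phi\,ds=:G\phi$, which decays like $|x|^{-(d-2)}$ and therefore lies in $L^2(\R^d)$ exactly when $d\ge5$. Hence $\|V_1^{\phi}(t)\|_{L^2}^2$ converges in $L^1(\P)$; its limit is $\sigma(W,\phi)^2$, establishing \eqref{aee3.1}, and $\sigma(W,\phi)^2>0$ $\P$-a.s.\ follows from a Paley--Zygmund estimate based on $\E\|V_1^{\phi}(t)\|_{L^2}^2\ge\|Q_t\phi\|_{L^2}^2\to\|G\phi\|_{L^2}^2>0$ and the uniform fourth-moment bound. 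Along a subsequence $\tfrac1T\int_0^T\|V_1^{\phi}(u)\|_{L^2}^2\,du\to\sigma(W,\phi)^2$ (a Ces\`aro-type average of the above $L^1(\P)$-convergence), while $N_T/T\to0$ $\P$-a.s.: the uniform spatial decay of $V_2^{\phi}(u,\cdot)$ (of order $|y|^{-(d-2)}$ out to scale $|y|\sim\sqrt u$) together with $g(y,z)\le\eps(|y-z|^{-\alpha}\wedge1)$ and $\alpha>2$ forces $\la N\ra_T=\int_0^T\!\int\!\int V_2^{\phi}(u,y)V_2^{\phi}(u,z)g(y,z)\,dy\,dz\,du=O\bigl(T^{\max\{(6-\alpha)/2,\,1\}}\bigr)=o(T^2)$, so the strong law for continuous martingales applies. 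Thus $\kappa_2(W,T)/T\to\sigma(W,\phi)^2$.

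\medskip

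\noindent\textbf{Higher cumulants and conclusion.}
These are handled by induction on $k$ using the same moment machinery (\cref{l1.1} with a larger parameter $q$): for $d\ge5$ one gets $\sup_t\|V_k^{\phi}(t)\|_{L^2(\R^d)}<\infty$ and the same $|y|^{-(d-2)}$-type decay out to scale $\sqrt t$. Integrating the $V_k^{\phi}$-equation against $\lambda$, the drift term is a time integral of quantities bounded by $\tfrac12\sum_{i+j=k}\binom{k}{i}\|V_i^{\phi}(u)\|_{L^2}\|V_j^{\phi}(u)\|_{L^2}=O(1)$, hence $O(T)$, and the martingale term has quadratic variation $o(T^2)$ exactly as for $N$, hence is $o(T)$; therefore $\kappa_k(W,T)=O(T)=o(T^{k/2})$ for $k\ge3$. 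Finally, given $\{T_n\}$ one extracts a single subsequence along which all of these $\P$-a.s.\ convergences hold simultaneously, and the reduction above then completes the proof. The main obstacle is the analysis in the previous paragraph: the uniform-in-time second- and fourth-moment bounds, the accompanying spatial-decay estimates, and the existence of $\lim_{t\to\infty}\|V_1^{\phi}(t)\|_{L^2}^2$. This is exactly where the weakness of the environment is used (via \cref{l1.1}, to keep the moment recursions bounded) and where $d\ge5$ is genuinely needed (so that the limiting Green-potential profile lies in $L^2$ and the norms $\|V_k^{\phi}(t)\|_{L^2}$ stay bounded), with $\alpha>2$ entering both there and in the estimate $\la N\ra_T=o(T^2)$; once these are in hand, the induction controlling the higher cumulants is routine though lengthy.
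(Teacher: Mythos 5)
Your cumulant-expansion strategy is a genuinely different route from the paper's: instead of analyzing the full nonlinear $U^{T^{-1/2}\phi}$ as a single object, you expand $U^{\theta\phi}$ in powers of $\theta$, control each conditional cumulant $\kappa_k(W,T)$ separately, and invoke moment-determinacy of the Gaussian. The reduction itself is sound, and your treatment of $k=2$ is essentially what the paper does for $I_1$ and $I_3$ in \eqref{ae7.32}: note that by monotonicity of $t\mapsto\int V_1^\phi(t,x)^2\,dx$ the Ces\`aro average $\tfrac1T\int_0^T\|V_1^\phi(u)\|_{L^2}^2\,du$ already converges $\P$-a.s.\ along the \emph{full} sequence, so no subsequence is needed there (the subsequence is only needed to upgrade $L^1/L^2$ convergences of the remaining error terms to a.s.\ convergence). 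Your observation that $Q_t\phi\uparrow G\phi\in L^2(\R^d)$ exactly when $d\geq5$ correctly isolates the role of the dimension restriction, in the spirit of Lemma \ref{al4.1}.

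The genuine gap is the entire $k\geq3$ analysis, which you dismiss as ``routine though lengthy.'' To prove $\kappa_k(W,T)=O(T)$ for every $k$ you need, for every $k$, a uniform bound $\sup_t\E\|V_k^\phi(t)\|_{L^2}^2<\infty$ and a quadratic-variation bound $\E\la M^{(k)}\ra_T=\int_0^T\!\int\!\int\E[V_k^\phi(u,y)V_k^\phi(u,z)]\,g(y,z)\,dy\,dz\,du=o(T^2)$. Unfolding the Feynman--Kac representation of $\E[V_k(u,y)V_k(u,z)]$ through the $V_k$-recursion brings in joint spatial moments $\E\big[\prod_j V_{i_j}(\cdot,x_j)\big]$ with $\sum_j i_j$ of order $2k$, so closing the induction requires controlling moments of $V_1^\phi$ (and mixed moments of lower $V_i^\phi$'s) of arbitrarily high order, with integrable-in-space and uniform-in-time decay. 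None of this is supplied, and the pathwise decay of $V_2^\phi(u,\cdot)$ ``out to scale $\sqrt u$'' that you invoke for $\la N\ra_T=O(T^{(6-\alpha)/2})$ is stated pathwise but would at best come out in expectation, so another subsequence extraction (or a genuinely pathwise estimate) would be needed. The paper sidesteps this entirely: defining $u_T=\sqrt T\,U^{T^{-1/2}\phi}$ and $v_T=V_1^\phi-u_T$, the comparison principle gives $0\leq v_T\leq V_1^\phi$ (see \eqref{ae7.31}), and the scalar $T^{-1/2}\la\lambda,v_T^\phi(T)\ra$ --- which packages the contribution of \emph{all} $k\geq2$ cumulants at once --- is then controlled in \eqref{ae7.32}--\eqref{ea7.1} using only second and fourth moments of $V_1^\phi$ (Lemma \ref{al4.1}, Lemma \ref{al6.3}). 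If you insist on the cumulant route, the missing ingredient is a for-each-$k$ moment scheme for $V_k^\phi$; without it the argument does not close.
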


The proof of Theorem \ref{t2} will be given in Section \ref{as2}. Again, we have the following annealed version given the above-quenched result.

\begin{corollary}[Annealed CLT]\label{c2.2}
Let $d\geq 5$ and $\alpha>2$. For any $\phi \in C_c^+(\R^d)$, we have 
\begin{align*} 
\P \Big[T^{-1/2} \Big(Y_T(\phi)   -\la \lambda, V_1^{\phi}({T})\ra\Big) \in \cdot  \Big] \Rightarrow \P\Big(\sigma(W, \phi) Z \in \cdot \Big)
\end{align*}
as $T\to \infty$.
\end{corollary}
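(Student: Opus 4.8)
The plan is to deduce Corollary \ref{c2.2} from the quenched central limit theorem (Theorem \ref{t2}) by integrating out the environment, handling the continuous time parameter through the subsequence characterization of convergence. Write $\mu_T$ for the law under $\P$ of the random variable $T^{-1/2}\big(Y_T(\phi)-\la\lambda,V_1^\phi(T)\ra\big)$, and $\mu$ for the law under $\P$ of $\sigma(W,\phi)Z$. By Theorem \ref{t2} one has $\sigma(W,\phi)\in(0,\infty)$ $\P$-a.s., so $\mu$ is a bona fide probability measure on $\R$; moreover, writing $\E^W$ for conditional expectation given $\cF_\infty^W$ and $\P^W$ for the associated conditional law, the tower property gives $\int_\R h\,d\mu=\E\big[\E^W\big(h(\sigma(W,\phi)Z)\big)\big]$ for every bounded measurable $h$. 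Our goal is therefore to show $\int_\R h\,d\mu_T\to\int_\R h\,d\mu$ as $T\to\infty$, for every $h\in C_b(\R)$.

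Fix such an $h$. To prove this convergence as $T\to\infty$ it suffices to prove it along an arbitrary sequence $T_n\to\infty$, and for that it suffices to extract, from an arbitrary subsequence, a further subsequence along which it holds. So let $(T_n)$ be any sequence tending to infinity. Applying Theorem \ref{t2} to $(T_n)$ produces a subsequence $T_{n_k}\to\infty$ and a $\P$-null set $N$ such that, for every environment $W\notin N$, the conditional laws
\[
\P^W\Big[T_{n_k}^{-1/2}\big(Y_{T_{n_k}}(\phi)-\la\lambda,V_1^\phi(T_{n_k})\ra\big)\in\cdot\,\Big]
\]
converge weakly, as $k\to\infty$, to $\P^W\big(\sigma(W,\phi)Z\in\cdot\big)$. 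Since $h$ is bounded and continuous, this yields, for each $W\notin N$,
\[
\E^W\Big(h\big(T_{n_k}^{-1/2}(Y_{T_{n_k}}(\phi)-\la\lambda,V_1^\phi(T_{n_k})\ra)\big)\Big)\ \longrightarrow\ \E^W\big(h(\sigma(W,\phi)Z)\big),\qquad k\to\infty.
\]

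The left-hand side is an $\cF_\infty^W$-measurable function of $W$, bounded in absolute value by $\|h\|_\infty$, which is $\P$-integrable since $\P$ is a probability measure. Hence dominated convergence together with the tower property gives
\[
\int_\R h\,d\mu_{T_{n_k}}=\E\Big[\E^W\Big(h\big(T_{n_k}^{-1/2}(Y_{T_{n_k}}(\phi)-\la\lambda,V_1^\phi(T_{n_k})\ra)\big)\Big)\Big]\ \longrightarrow\ \E\big[\E^W\big(h(\sigma(W,\phi)Z)\big)\big]=\int_\R h\,d\mu
\]
as $k\to\infty$. This furnishes the desired further subsequence; consequently $\int_\R h\,d\mu_T\to\int_\R h\,d\mu$ as $T\to\infty$, and since $h\in C_b(\R)$ was arbitrary, $\mu_T\Rightarrow\mu$. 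The argument is parallel to (and slightly simpler than) the proof of Corollary \ref{c1.1}.

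I do not anticipate a genuine obstacle: all the probabilistic content is packaged in Theorem \ref{t2}, and what remains is the routine interchange of limit and expectation via dominated convergence together with the measurability of the conditional expectations. The one point worth flagging is that Theorem \ref{t2} only guarantees a subsequence along which the quenched convergence is almost sure in $W$; this is exactly why the deduction must be phrased through the ``every subsequence has a further subsequence'' criterion rather than as a direct almost-sure statement valid for all $T$, and it is also the reason the limiting law in the annealed statement is the (deterministic) measure $\mu=\P(\sigma(W,\phi)Z\in\cdot)$ rather than a $W$-dependent one.
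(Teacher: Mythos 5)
Your proof is correct and takes essentially the same approach as the paper: apply the quenched CLT to an arbitrary sequence $T_n\to\infty$ to get a subsequence along which the quenched convergence is $\P$-a.s., then integrate out $W$ by dominated convergence (bound $\|h\|_\infty$) together with the tower property, and conclude via the ``every sequence has a convergent subsequence'' criterion. The only cosmetic difference is that you test weak convergence against $h\in C_b(\R)$ while the paper works with the cumulative distribution functions $\P^W(\cdots\leq a)$; both are valid, and your version sidesteps any need to remark that the limiting CDF has no atoms.
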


\begin{proof}
Fix any $\phi \in C_c^+(\R^d)$. By Theorem \ref{t2}, for any sequence $T_n\to \infty$, there is a further subsequence $T_{n_k} \to \infty$ such that for $\P$-a.s. environment $W$, we get for all $a\in \R$,
\begin{align*} 
&\lim_{T_{n_k}\to \infty} \P^W\Big[T_{n_k}^{-1/2} \Big(Y_{T_{n_k}}(\phi)   - T_{n_k}\la \lambda, \phi\ra  \Big)    \leq a \Big] =   \P^W \Big[\sigma(W, \phi) Z   \leq a \Big].
\end{align*}
It follows that
\begin{align*} 
&\lim_{T_{n_k}\to \infty} \P \Big[T_{n_k}^{-1/2} \Big(Y_{T_{n_k}}(\phi)   - T_{n_k}\la \lambda, \phi\ra  \Big)    \leq a \Big]  \\
=&\lim_{T_{n_k}\to \infty}\P\Big[\P^W\Big(T_{n_k}^{-1/2} \Big(Y_{T_{n_k}}(\phi)   - T_{n_k}\la \lambda, \phi\ra  \Big)    \leq a\Big) \Big]\\
= & \P\Big[\P_{\lambda}^W\Big(\sigma(W, \phi) Z   \leq a\Big) \Big]=\P\Big(\sigma(W, \phi) Z   \leq a\Big).
\end{align*}
Since $T_n$ is arbitrary, we conclude that
\begin{align*} 
&\lim_{T\to \infty} \P\Big[T^{-1/2} \Big(Y_{T}(\phi)   - T\la \lambda, \phi\ra  \Big)    \leq a \Big]  =\P\Big(\sigma(W, \phi) Z   \leq a\Big),
\end{align*}
as required.
\end{proof}
 
It would be desirable if one could replace $\la \lambda, V_1^{\phi}({T})\ra$ in Theorem \ref{t2} and Corollary \ref{c2.2} by $T\la \lambda,  \phi\ra$ suggested by Theorem \ref{t1} and Corollary \ref{c1.1}. However, the problem is that the limiting distribution may no longer be a centered Gaussian due to the random environment. We have the following result.

\begin{proposition}\label{p1.1}
Let $d\geq 5$ and $\alpha>4$. For any $\phi \in C_c^+(\R^d)$, as $T\to \infty$ we have 
\begin{align*} 
\P \Big[T^{-1/2} \Big(\la \lambda, V_1^{\phi}({T})\ra   -T\la \lambda,  \phi\ra\Big) \in \cdot  \Big] \Rightarrow \P\Big(B_{\xi(W,\phi)} \in \cdot \Big),
\end{align*}
where $(B_t, t\geq 0)$ is a linear Brownian motion and $\xi(W,\phi)\in (0,\infty)$ is given by
\begin{align} 
 \xi(W,\phi):= \lim_{t\to \infty}  \int \int  V_1^\phi(t,x) V_1^\phi(t,y) g(x,y) dx dy.
\end{align}
\end{proposition}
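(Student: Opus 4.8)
\emph{Overview and reduction.} The plan is to exhibit $\la\lambda,V_1^{\phi}(T)\ra-T\la\lambda,\phi\ra$ as a continuous martingale in the environment filtration $(\cF_t^W)$, to identify the long‑time limit of its normalised quadratic variation with $\xi(W,\phi)$, and then to invoke the functional martingale central limit theorem in its mixed‑normal (stable) form. Integrating the mild equation \eqref{ae1.33} for $V_1^{\phi}$ over $x\in\R^d$ and using $\int_{\R^d}p_{t-s}(x,y)\,dx=1$ together with $\int_{\R^d}P_s\phi(x)\,dx=\la\lambda,\phi\ra$, a stochastic Fubini argument gives
\begin{align*}
\la\lambda,V_1^{\phi}(T)\ra-T\la\lambda,\phi\ra=M_T:=\int_0^T\!\!\int_{\R^d}V_1^{\phi}(s,x)\,W(ds,x)\,dx .
\end{align*}
By the construction of the integral against $W$ recalled around \eqref{ea4.41}, $(M_T)_{T\ge0}$ is a continuous $\cF_t^W$-martingale (a true martingale once the $L^2$ bound on $V_1^{\phi}$ coming from \eqref{e1} via a Feynman--Kac representation is in hand), with
\begin{align*}
\la M\ra_T=\int_0^T h(s)\,ds,\qquad h(s):=\int_{\R^d}\!\int_{\R^d}V_1^{\phi}(s,x)V_1^{\phi}(s,y)\,g(x,y)\,dx\,dy\ \ge\ 0 .
\end{align*}
So it suffices to prove $T^{-1/2}M_T\Rightarrow B_{\xi(W,\phi)}$.

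\emph{The long‑time limit of $h$.} The core is to show $h(s)\to\xi(W,\phi)$ as $s\to\infty$ --- a.s., or at least in probability, which is all the last step needs --- with $\xi(W,\phi)\in(0,\infty)$. Two ingredients are required. First, a uniform spatial envelope for $V_1^{\phi}$: its deterministic part $Q_s\phi(x)$ is dominated, uniformly in $s$, by the Green potential $\int_0^\infty P_r\phi(x)\,dr$, which is bounded and of order $(1+|x|)^{2-d}$ at infinity, while the noisy part is controlled in mean square through \eqref{e1} and the same Feynman--Kac representation; this yields an $s$‑independent envelope of order $(1+|x|)^{2-d}$, which is square‑integrable exactly because $d\ge5$ and for which, using \eqref{e0.0},
\begin{align*}
\int_{\R^d}\!\int_{\R^d}(1+|x|)^{2-d}(1+|y|)^{2-d}\big(|x-y|^{-\alpha}\wedge1\big)\,dx\,dy<\infty
\end{align*}
exactly because $\alpha>4$; hence $h(s)$ is finite with an $s$‑independent integrable dominating bound. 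Second, the stabilisation of $V_1^{\phi}(s,\cdot)$ as $s\to\infty$ --- the phenomenon behind the existence of the limit in \eqref{aee3.1}, analysed in Section~\ref{as2} --- in a topology making the bilinear form $(u,v)\mapsto\int\!\int u(x)v(y)g(x,y)\,dx\,dy$ continuous on the envelope‑dominated family. Combining the two with dominated convergence gives $h(s)\to\xi(W,\phi)$, the limit being strictly positive because $V_1^{\phi}(s,\cdot)\ge Q_s\phi$ increases to a strictly positive limit.

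\emph{Cesàro and the martingale CLT.} Given the convergence of $h$, a Cesàro argument gives, for each fixed $u\ge0$,
\begin{align*}
\la M^{(T)}\ra_u:=\tfrac1T\,\la M\ra_{uT}=u\cdot\tfrac1{uT}\!\int_0^{uT}\!h(s)\,ds\ \longrightarrow\ u\,\xi(W,\phi),
\end{align*}
where $M^{(T)}_u:=T^{-1/2}M_{uT}$ is a continuous martingale in the filtration $(\cF_{uT}^W)_{u\ge0}$. Since the limiting quadratic‑variation process $u\mapsto u\,\xi(W,\phi)$ is a.s.\ continuous and $\cF_\infty^W$‑measurable, the functional martingale central limit theorem in its stable form (Jacod--Shiryaev) gives $M^{(T)}\Rightarrow\big(B_{u\,\xi(W,\phi)}\big)_{u\ge0}$, with $B$ a standard Brownian motion independent of $\cF_\infty^W$, hence of $\xi(W,\phi)$; evaluating at $u=1$ yields $T^{-1/2}M_T\Rightarrow B_{\xi(W,\phi)}$, which is the assertion. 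The mixed‑normal limit $B_{\xi(W,\phi)}$ genuinely requires fresh randomness, since the martingale fluctuations decorrelate asymptotically from the slowly varying functional $h$ --- precisely what the stable version of the theorem delivers.

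\emph{Where the difficulty lies.} The genuinely delicate step is the second one: establishing the stabilisation of $V_1^{\phi}(s,\cdot)$ and thereby the existence of $\xi(W,\phi)$ as a finite, strictly positive limit. It is here that $d\ge5$ and $\alpha>4$ enter, and where Lemma~\ref{l1.1} is indispensable --- without the weak‑environment hypothesis the multiplicative noise $V_1^{\phi}\,\dot W$ could destroy both the uniform mean‑square control and the stabilisation, so that the limit defining $\xi(W,\phi)$ need not exist. The reduction in Step~1 and the martingale CLT in Step~3 are then standard.
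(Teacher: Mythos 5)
Your reduction to the continuous $\cF_t^W$-martingale $M_T=\int_0^T\int V_1^\phi(s,x)W(ds,x)\,dx$ with $\la M\ra_T=\int_0^T h(s)\,ds$ is exactly the paper's, and your second‑moment computation showing $\int\!\int (1+|x|)^{2-d}(1+|y|)^{2-d}(|x-y|^{-\alpha}\wedge1)\,dx\,dy<\infty$ for $d\ge5,\ \alpha>4$ is morally what underlies Lemma~\ref{al5.1}(b) and Lemma~\ref{l7.1}. However, your passage to $h(s)\to\xi(W,\phi)$ ``by dominated convergence'' has a gap: the envelope you produce controls $V_1^\phi$ only in mean square, not pathwise, so Lebesgue dominated convergence is not available. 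The paper avoids this entirely by noting that $s\mapsto V_1^\phi(s,x)$ is nonnegative and nondecreasing (see \eqref{ea6.6}), so $s\mapsto V_1^\phi(s,x)V_1^\phi(s,y)g(x,y)$ is monotone and $h(s)\uparrow\xi(W,\phi)\in[0,\infty]$ by monotone convergence, with a.s.\ finiteness then supplied by $\E[\xi(W,\phi)]<\infty$ (Lemma~\ref{l7.1}). You should replace dominated by monotone convergence; your envelope estimate is still needed, but only for the finiteness of $\E[\xi]$, not for the pointwise limit.

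The final step is a genuinely different route. You invoke the functional stable martingale CLT (Jacod--Shiryaev) for $M^{(T)}_u:=T^{-1/2}M_{uT}$, asserting that the limiting bracket $u\,\xi(W,\phi)$ being $\cF_\infty^W$-measurable yields a limit $B$ independent of $\cF_\infty^W$. That conclusion is correct, but the off‑the‑shelf nesting conditions in Jacod--Shiryaev assume the limiting bracket is measurable with respect to the initial $\sigma$-field of a nested array, which $\xi(W,\phi)$ is not; one must appeal to a version of the stable CLT adapted to this situation. The paper sidesteps these conditions by arguing directly with Dambis--Dubins--Schwarz: it produces a single Brownian motion $B$ with $N^\phi(t)=B_{\la N^\phi\ra_t}$, uses Brownian scaling to identify $T^{-1/2}N^\phi(T)$ in law with $B_{T^{-1}\la N^\phi\ra_T}$ (see \eqref{e2.5.2}), and then proves $B_{T^{-1}\la N^\phi\ra_T}\to B_{\xi(W,\phi)}$ in probability from the a.s.\ convergence \eqref{e2.5.1}, tightness of $\xi$ via Lemma~\ref{l7.1}, and L\'evy's modulus of continuity (display \eqref{ae4.21}). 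Both routes give the assertion, but the DDS argument is self‑contained and elementary, whereas your route is cleaner conceptually once the appropriate stable CLT is located. With the monotone‑convergence fix, your outline proves the proposition.
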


The proof of Proposition \ref{p1.1} will be given in Section \ref{as7}.\\

\no {\bf Conjectures and open problems}:\\

\no (1) For the central limit theorem in $d=3,4$, we do conjecture that both Theorem \ref{t2} and Corollary \ref{c2.2} hold if we replace $T^{-1/2}$ by $T^{-3/4}$ in $d=3$ and $T^{-1/2} (\log T)^{-1}$ in $d=4$.  \\

\no (2) The current manuscript proves the LLN and CLT when the random environment satisfies \eqref{e0.0}. Together with Chen-Ren-Zhao's results (see \cite{CRZ15}), we may summarize that when $\alpha>2$, there exist two constants $0<c_1\leq c_2$ such that if $g(x,y)\leq c_1 (|x-y|^{-\alpha} \wedge 1)$, then the above four results follow; if $g(x,y)\geq c_2 (|x-y|^{-\alpha} \wedge 1)$, then there are no such limit theorems in view of \eqref{aee1.1}. It is not clear whether $c_1=c_2$ or not. Will there be any other limiting theorems? We leave it as an open problem.\\

\no  (3) Assuming that $g$ satisfies \eqref{aae0.0} with $d=2$ and $\alpha\geq 2$ or $d\geq 3$ and $\alpha=2$, we have \eqref{aee1.2} holds. However, we do not know the limiting behavior of $Y_t$ as $t\to \infty$. There might exist some nontrivial limits of $t^{-1} Y_t$ similar to the super-Brownian motion case (see \eqref{aae3}). For instance, in $d=2$, if we set 
one needs to study the scaled noise
\begin{align} 
 w_T(t,x)=W(Tt, \sqrt{T}x),
 \end{align}
 where $W$ is the colored noise as in \eqref{aee2.1} and \eqref{aee2.2}.
 Assume for simplicity that
  \begin{align*} 
 g(x,y) =c (|x-y|^{-\alpha} \wedge 1).
\end{align*}
It follows that if $\alpha=2$, then for any $x\neq y$, we have
  \begin{align} 
\lim_{T\to \infty}\E[w_T(t,x) w_T(s,y)]=\lim_{T\to \infty} Tg(\sqrt{T} x, \sqrt{T} y) (t\wedge s)=c|x-y|^{-2} (t\wedge s);
 \end{align}
if $\alpha>2$, then
   \begin{align} 
\lim_{T\to \infty}\E[w_T(t,x) w_T(s,y)]=0, \quad \forall x\neq y.
 \end{align}
 We hope to return to this problem in future work.\\

  \no {\bf Organization of the paper}.   In Section \ref{as2}, we will give the proofs of the main results Theorem \ref{t1} and Theorem \ref{t2} assuming some moment bounds. Section \ref{as3} states all the moment formulas regarding $Y_T$ using the conditional Laplace transform.  Section \ref{as4} presents some preliminary estimates for the second moment. In Section \ref{as5}, we combine the moment formulas from Section \ref{as3} and the moment estimates from Section \ref{as4} to complete the proof of Theorem \ref{t1}. Section \ref{as6} establishes the convergence of the conditional Laplace transform and thus finishes the proof of Theorem \ref{t2}. In Section \ref{as7}, we use the Dubins-Schwartz Theorem to prove Proposition \ref{p1.1}.

\section*{Acknowledgement}

 Jie Xiong is supported by China's National Key R\&D Program (No. 2022YFA1006102).  Jieliang Hong is supported by the Startup Foundation of Shenzhen (No. Y01286145).

\section{Proofs of the main theorems}\label{as2}

In this section, we present the proofs of Theorem \ref{t1} and Theorem \ref{t2} by assuming some intermediate results.

 \subsection{Proof of Theorem \ref{t1}}
 
 We claim that the proof of Theorem \ref{t1} can be reduced to proving the following second moment bounds on $\la Y_T,  \phi \ra$.
\begin{lemma}\label{al2.1}
Let $d\geq 3$ and $\alpha>2$. There is some $\delta=\delta(d,\alpha)\in (0,1)$ such that for any $\phi\in C_c^+(\R^d)$, we have
\begin{align*}
\E_{\lambda}\Big[\Big(\la Y_T,  \phi \ra-T\la {\lambda}, \phi\ra\Big)^2\Big]\leq CT^{2-\delta}, \quad \forall T>0,
\end{align*}
where $C>0$ is some constant depending only on $d, \alpha, \phi$.
\end{lemma}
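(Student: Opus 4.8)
The plan is to compute the second moment $\E_\lambda[\la Y_T,\phi\ra^2]$ exactly using the conditional Laplace transform \eqref{ed1.1} and compare it with $(T\la\lambda,\phi\ra)^2 = (\la\lambda,Q_T\phi\ra)^2$. Since $\E_\lambda[\la Y_T,\phi\ra] = \la\lambda,Q_T\phi\ra$, it suffices to show $\mathrm{Var}_\lambda(\la Y_T,\phi\ra)\le CT^{2-\delta}$. First I would expand \eqref{ed1.1} in powers of $\theta$ (replacing $\phi$ by $\theta\phi$): writing $U^{\theta\phi} = \theta V_1^\phi + \tfrac{\theta^2}{2}V_2^\phi + o(\theta^2)$, and recalling $\E_\lambda^W[\la Y_T,\phi\ra] = \la\lambda,V_1^\phi(T)\ra$, the next-order term gives the conditional variance-type quantity $\E_\lambda^W[\la Y_T,\phi\ra^2] - \la\lambda,V_1^\phi(T)\ra^2 = \la\lambda, V_2^\phi(T)\ra$, where $V_2^\phi$ solves the linear SPDE obtained by differentiating \eqref{ea10.3} twice: schematically $V_2^\phi(t,x) = -\int_0^t ds\int p_{t-s}(x,y)V_1^\phi(s,y)^2\,dy + \int_0^t ds\int p_{t-s}(x,y)V_2^\phi(s,y)\dot W(s,y)\,dy$. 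Then
\[
\mathrm{Var}_\lambda(\la Y_T,\phi\ra) = \E\big[\la\lambda,V_2^\phi(T)\ra\big] + \mathrm{Var}_{\P}\big(\la\lambda,V_1^\phi(T)\ra\big),
\]
and the problem splits into bounding these two pieces, each by $CT^{2-\delta}$.

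For the first piece, taking $\E$ of the $V_2^\phi$ integral equation kills the noise martingale term, leaving $\E[\la\lambda,V_2^\phi(T)\ra] = -\int_0^T ds\int\int \lambda(dx)p_{T-s}(x,y)\E[V_1^\phi(s,y)^2]\,dy = -\int_0^T \la\lambda, \E[(V_1^\phi(s,\cdot))^2]\ra\,ds$ using $\int\lambda(dx)p_{T-s}(x,y) = 1$. The key is therefore a bound on $\E[\la\lambda, V_1^\phi(s)^2\ra] = \int\int\int \lambda(dx)\,p_s(x,y_1)p_s(x,y_2)\E[V_1^\phi(s,y_1)V_1^\phi(s,y_2)]$... more directly, from \eqref{ae1.33} and the Itô isometry for $W$, one gets $\E[V_1^\phi(s,y_1)V_1^\phi(s,y_2)] = Q_s\phi(y_1)Q_s\phi(y_2) + \int_0^s dr\int\int p_{s-r}(y_1,z_1)p_{s-r}(y_2,z_2)g(z_1,z_2)\E[V_1^\phi(r,z_1)V_1^\phi(r,z_2)]\,dz_1dz_2$. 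This Gronwall-type/renewal integral equation, combined with the Feynman-Kac / Brownian-bridge reformulation of $g$-interaction exactly as in Lemma \ref{l1.1} (which guarantees $\sup_{x,y}\Pi_{(x,y)}(\exp(q\int_0^\infty g(B_s,\widetilde B_s)\,ds))\le 2$ for small $\eps$), should yield $\E[V_1^\phi(s,y_1)V_1^\phi(s,y_2)]\le C\,\Pi_{(y_1,y_2)}[e^{q\int_0^s g}\,Q_\infty\phi(B_u)Q_\infty\phi(\widetilde B_u)\text{-type}]$, hence integrable kernel bounds. Integrating over $\lambda(dx)$ and $dy$ and using the transient-dimension decay of $\int p_s\,Q_s\phi\cdot p_s\,Q_s\phi$ — where the $d\ge3$ assumption is exactly what makes $\int_0^\infty \|P_s\phi\|$-type quantities manageable — should give $\E[\la\lambda,V_1^\phi(s)^2\ra]\le Cs^{1-\delta}$ (this is the Iscoe-type estimate: in $d=3$ it grows like $s^{1/2}$, in $d=4$ like $\log s$, in $d\ge 5$ bounded), and then integrating $ds$ over $[0,T]$ gives $CT^{2-\delta}$.

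For the second piece, $\mathrm{Var}_\P(\la\lambda,V_1^\phi(T)\ra)$, I would use \eqref{ae1.33}: $\la\lambda,V_1^\phi(T)\ra = \la\lambda,Q_T\phi\ra + \int_0^T\int (\int\lambda(dx)p_{T-s}(x,y))V_1^\phi(s,y)\dot W(s,y)\,dy = T\la\lambda,\phi\ra + \int_0^T\int V_1^\phi(s,y)\,W(ds,y)\,dy$ since $\int\lambda(dx)p_{T-s}(x,y)\equiv 1$. Hence $\mathrm{Var}_\P(\la\lambda,V_1^\phi(T)\ra) = \int_0^T ds\int\int V_1^\phi(s,y_1)V_1^\phi(s,y_2)g(y_1,y_2)\,dy_1dy_2$ in expectation, which is controlled by the very same kernel $\E[V_1^\phi(s,y_1)V_1^\phi(s,y_2)]$ against $g$, and since $g\le\eps(|y_1-y_2|^{-\alpha}\wedge 1)$ with $\alpha>2$ is integrable in $d\ge 3$, this is of the same order as (or smaller than) the first piece, again $\le CT^{2-\delta}$.

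The main obstacle I expect is establishing the sharp decay estimate for $\E[V_1^\phi(s,y_1)V_1^\phi(s,y_2)]$ uniformly in $s$ and in $y_1,y_2$ — i.e., solving the renewal integral equation for the two-point correlation of $V_1^\phi$ and showing it behaves like the super-Brownian two-point function modulated by a bounded (by Lemma \ref{l1.1}) exponential-of-$g$ Feynman-Kac correction. Getting the exponent $\delta$ right (equivalently, the correct power $s^{1-\delta}$ of the occupation-time variance growth) requires carefully tracking the dimension-dependent heat-kernel integrals $\int_0^s\int p_r Q_r\phi\,dr$ and the self-intersection-type integral $\int_0^s\int p_{2r}(0,\cdot)\,dr$, which converges for $d\ge 3$; this is precisely where Iscoe's dimensional trichotomy enters, and why the statement only asserts existence of some $\delta(d,\alpha)\in(0,1)$ rather than an explicit value. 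I would organize these heat-kernel computations as the "preliminary estimates for the second moment" promised for Section \ref{as4}, feeding the resulting bounds into the moment formulas of Section \ref{as3} to conclude.
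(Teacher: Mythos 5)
Your proposal follows essentially the same approach as the paper's proof in Sections \ref{as3}--\ref{as5}: decompose the variance as $\E_\lambda[(\la Y_T,\phi\ra-T\la\lambda,\phi\ra)^2]=\E[\la\lambda,V_2^\phi(T)\ra]+\E[(\la\lambda,V_1^\phi(T)\ra-T\la\lambda,\phi\ra)^2]$, reduce both terms to the two-point kernel $V_t^{\phi,\phi}(x,y)=\E[V_1^\phi(t,x)V_1^\phi(t,y)]$ (evaluated on the diagonal for the first term and integrated against $g$ for the second), and bound that kernel via H\"older's inequality combined with the Feynman--Kac exponential moment from Lemma \ref{l1.1} and heat-kernel decay. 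Two small inaccuracies are worth fixing in a full write-up: your sign on $V_2^\phi$ (the $-$ in front of the $V_1^2$ drift) forces $\la\lambda,V_2^\phi\ra\le 0$, contradicting the variance identity you state --- the paper instead sets $V_n^\phi=(-1)^{n-1}\partial_\theta^n U^{\theta\phi}|_{\theta=0}$ so that $V_2^\phi\ge 0$; and the $g$-weighted piece is \emph{not} of the same order as or smaller than the diagonal piece in general --- for $2<\alpha\le 4$ it is the dominant contribution and is what determines the exponent $\delta$ (this is the content of Lemma \ref{al5.1}, the most delicate estimate in Section \ref{as5}), though your asserted bound $\le CT^{2-\delta}$ for it is exactly what the paper proves.
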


The proof of Lemma \ref{al2.1} will be done in Section \ref{as5}. We can prove Theorem \ref{t1} assuming the above result.

\begin{proof}[Proof of Theorem \ref{t1} assuming Lemma \ref{al2.1}]
 
Fix any $r>1$. For any $K\geq 1$, by using Chebyshev's inequality, we get  \begin{align*}
&\E\Big[\sum_{n=1}^\infty\P_{\lambda}^W\Big(\Big| r^{-n} Y_{r^n}(\phi)-\la {\lambda}, \phi\ra\Big|>K^{-1}\Big)\Big] \\
&\leq  K^{2}  \E\Big[ \sum_{n=1}^\infty  r^{-2n} \E_{\lambda}^W \Big(Y_{r^n}(\phi)-r^n \la {\lambda}, \phi\ra\Big)^2\Big]\\
&=  K^{2} \sum_{n=1}^\infty  r^{-2n}\E_{\lambda} \Big[ \Big(Y_{r^n}(\phi)-r^n \la {\lambda}, \phi\ra\Big)^2\Big] \leq CK^{2} \sum_{n=1}^\infty r^{-\delta n} <\infty,
\end{align*}
where the second inequality follows by Lemma \ref{al2.1}.
Therefore for $\P$-a.e. $W$, we get
\begin{align}\label{ea5.2}
\sum_{n=1}^\infty\P_{\lambda}^W\Big(\Big| r^{-n} Y_{r^n}(\phi)-\la {\lambda}, \phi\ra\Big|>K^{-1}\Big)<\infty.
\end{align}
Fix $\omega$ outside a $\P$-null set such that for each $W=W(\omega)$, we have \eqref{ea5.2} holds for all $K\geq 1$. By Borel-Cantelli's lemma, we conclude that 
\begin{align}\label{ea5.7}
\lim_{n\to\infty}  \frac{Y_{r^n}(\phi)}{r^{n}}=\la {\lambda}, \phi\ra, \quad \P_{\lambda}^W\text{-a.s.}
\end{align}
For all $t>0$ large, there is some $n\geq 1$ such that $r^{n}\leq t<r^{n+1}$. Hence by the monotonicity of $t\mapsto Y_t(\phi)$, we get
\begin{align*}
 \frac{Y_{r^{n}}(\phi)}{r^{n+1}} \leq \frac{Y_{t}(\phi)}{t}\leq  \frac{Y_{r^{n+1}}(\phi)}{r^{n}}.
\end{align*}
Together with \eqref{ea5.7}, we conclude that with $\P_{\lambda}^W$-probability one,
\begin{align*}
r^{-1} \la {\lambda}, \phi\ra \leq  \liminf_{t\to\infty}  \frac{Y_{t}(\phi)}{t}\leq \limsup_{t\to\infty}  \frac{Y_{t}(\phi)}{t}\leq r \la {\lambda}, \phi\ra.
\end{align*}
Let $r \downarrow 1$ to get the desired results.
\end{proof}

 \subsection{Proof of Theorem \ref{t2}}
 
We proceed to prove the Central Limit Theorem as in Theorem \ref{t2}.  Following the proof of Iscoe \cite[Theorem 5.4]{Isc86},   we only need to show that for any $\phi \in C_c^+(\R^d)$ and any sequence $T_n\to \infty$, there is a subsequence $T_{n_k} \to \infty$ such that for $\P$-a.s. environment $W$,  we have
\begin{align}\label{ae9.31}
 \lim_{T_{n_k} \to \infty}\E^W& \Big[\exp\Big(-\theta T_{n_k}^{-1/2} \Big(\la Y_{T_{n_k}}, \phi \ra   -\la {\lambda}, V_1^{\phi}({T_{n_k}})\ra \Big)\Big) \Big]
  =  e^{\frac{1}{2} \theta^2 \sigma(W, \phi)^2}, \quad \forall \theta \geq 0.  
\end{align}

Take $\mu=\delta_x$ in \eqref{ea6.2} to see that
\begin{align}\label{cea6.6}
 V_1^\phi(t, x)=\E_{\delta_x}^W \Big(\la Y_t, \phi\ra\Big).
\end{align}
By replacing $\phi$ by $\theta \phi$ for any $\theta\geq 0$ in the above, we get
\[
V_1^{\theta \phi}(t,x)= \theta V_1^{\phi}(t,x).
\]
It follows that (recall $\sigma(W,  \phi)$ from \eqref{aee3.1})
\begin{align} \label{ea6.12}
\sigma(W, \theta\phi)= \theta \sigma(W,  \phi), \quad \P\text{-a.s.}
\end{align}
In view of the above, the proof of \eqref{ae9.31} is reduced to  showing that
\begin{align}\label{cea6.7}
 \lim_{T_{n_k} \to \infty}\E^W& \Big[\exp\Big(- T_{n_k}^{-1/2} \Big(\la Y_{T_{n_k}}, \phi \ra   -\la {\lambda}, V_1^{\phi}({T_{n_k}})\ra \Big)\Big) \Big]
  =  e^{\frac{1}{2}  \sigma(W, \phi)^2}.  
\end{align}

 Fix any   $\phi \in C_c^+(\R^d)$. For each $T>0$, define
 \begin{align*} 
 Z_T^\phi :=& T^{-1/2} \Big(Y_T(\phi)   -\la {\lambda}, V_1^{\phi}(T)\ra\Big)\nn\\
 =& Y_T(T^{-1/2}  \phi)  -\la {\lambda}, T^{-1/2}  V_1^{\phi}(T)\ra.
\end{align*}
 It follows from the conditional Laplace transform \eqref{ed1.1} that
\begin{align} \label{ea7.2} 
 \E^W (e^{- Z_T^\phi})&= \E_\mu^W (e^{- Y_t(T^{-1/2}  \phi)  }) \cdot e^{\la {\lambda}, T^{-1/2}  V_1^{\phi}(T)\ra} \nn\\
 &=e^{-\la \lambda, U^{T^{-1/2}\phi}(T) \ra+\la {\lambda}, T^{-1/2}  V_1^{\phi}(T)\ra}= e^{ T^{-1/2}\la \lambda, v^{\phi}_T(T) \ra},
\end{align}
where we set
\begin{align} \label{ea6.1}
 v_T^\phi(t,x):=  V_1^{\phi}(t,x)-u_T^\phi(t, x)  \quad \text{ and } \quad u_T^\phi(t, x):=T^{1/2}   U^{T^{-1/2}\phi}(t,x).
 \end{align}
 
 It suffices to find the limit
 \begin{align*}
 \lim_{T\to \infty}T^{-1/2}\la \lambda, v^{\phi}_T(T) \ra.  
\end{align*}
In view of \eqref{ed10.3}, we obtain that
\begin{align}\label{ae2.2}
 u_T^\phi(t, x)= t \phi(x)&+\int_0^t \frac{\Delta}{2} u_T^\phi(s,x) ds\\
&-\frac{1}{2} T^{-1/2} \int_0^t  u_T^\phi(s,x)^2 ds  + \int_0^t  u_T^\phi(s,x) \dot{W}(s,x) ds.\nn
\end{align}
 By comparing the above with \eqref{ae10.33}, one may use the comparison principle for SPDEs (see, e.g., Theorem 2.26 of Pardoux \cite{Par}) to conclude that for any $T> 0$, with probability one, we have
\begin{align}\label{ae7.31}
  0\leq  v_T^\phi(t,x)= V_1^{\phi}(t,x)- u_T^\phi(t, x) \leq V_1^{\phi}(t,x), \quad \forall t\geq 0, x\in \R^d,
\end{align}

Combine \eqref{ae10.33} and \eqref{ae2.2} to see that
\begin{align*} 
v_T^\phi(t,x)= & \int_0^t \frac{\Delta}{2} v_T^\phi(s,x) ds+\frac{1}{2T^{1/2}}  \int_0^t  u_T^\phi(s,x)^2 ds  + \int_0^t  v_T^\phi(s,x) \dot{W}(s,x) ds. 
\end{align*}
Rewrite the above as
\begin{align}\label{ae7.34}
v_T^\phi(t,x)=\frac{1}{2T^{1/2}} & \int_0^t ds \int p_{t-s}(x,y) u_T^\phi(s,y)^2 dy\nn\\
 &+\int_0^t ds \int p_{t-s}(x,y) v_T^\phi(s,y) \dot{W}(s,y) dy.
 \end{align}
By integrating $x$ over  $\R^d$ on both sides above, we get
\begin{align}\label{ae9.32}
T^{-1/2}\la \lambda, v^{\phi}_T(T) \ra=\frac{1}{2T}  &\int_0^T dt \int  u_T^\phi(s,y)^2 dx+T^{-1/2}\int_0^T ds \int  v_T^\phi(t, x) \dot{W}(t, x) dx,
 \end{align}
where we have applied Fubini's theorem as well as the stochastic Fubini's theorem (see, e.g., Theorem 4.33 of \cite{Da14}).\\

 Define
 \begin{align}\label{ae7.32}
I_1(T, \phi):=&\frac{1}{2}  \frac{1}{T} \int_0^T dt \int  V_1^\phi(t, x)^2 dx,\\
I_2(T, \phi):=&\frac{1}{2}  \frac{1}{T}    \int_0^T dt \int  \Big(V_1^\phi(t, x)^2-u_T^\phi(t,x)^2\Big) dx,\nn\\
I_3(T, \phi):=&T^{-1/2}\int_0^T ds \int  v_T^\phi(t, x) \dot{W}(t, x) dx.\nn
\end{align}
Then \eqref{ae9.32} becomes
\begin{align}\label{ea7.1}
T^{-1/2}\la \lambda, v^{\phi}_T(T) \ra= I_1(T, \phi)-I_2(T, \phi)+I_3(T, \phi)
 \end{align}

Recall from \eqref{cea6.6} that  
\begin{align}\label{ea6.6}
 V_1^\phi(t, x)=\E_{\delta_x}^W [\la Y_t, \phi\ra] \quad \text{ is increasing in $t\geq 0$,}
\end{align}
The monotonicity implies that 
\begin{align*} 
 \lim_{t\to \infty} \int  V_1^\phi(t, x)^2 dx   \quad \text{ exists a.s.}
\end{align*}
To get the a.s. finiteness, we need the following moment estimates. The proof is deferred to Section \ref{as4}.

\begin{lemma}\label{al4.1}
Let $d\geq 5$ and $\alpha>2$. For any $\phi \in C_c^+(\R^d)$, we have
\begin{align*}
  \lim_{t\to \infty} \int  \E[V_1^\phi(t, x)^2] dx<\infty.
\end{align*}
\end{lemma}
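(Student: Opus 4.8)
\textbf{Proof proposal for Lemma \ref{al4.1}.}

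The plan is to use the mild formulation \eqref{ae1.33} to set up a moment recursion for $m(t,x,y):=\E[V_1^\phi(t,x)V_1^\phi(t,y)]$, and then bound the resulting integral in closed form using the environment assumption \eqref{e0.0} together with the exponential-moment estimate from Lemma \ref{l1.1}. First, from \eqref{ae1.33} and the It\^o isometry for the stochastic integral against $W$ (whose covariance structure is $g(x,y)$ in space and white in time, as recorded in the quadratic-variation computation after \eqref{ea4.49}), I would derive
\begin{align}\label{ppx1}
\E\big[V_1^\phi(t,x)\,V_1^\phi(t,y)\big]
&= Q_t\phi(x)\,Q_t\phi(y)\nn\\
&\quad+\int_0^t ds\int\int p_{t-s}(x,x')\,p_{t-s}(y,y')\,g(x',y')\,\E\big[V_1^\phi(s,x')\,V_1^\phi(s,y')\big]\,dx'\,dy'.
\end{align}
This is a linear integral equation in $m$, so it can be solved by Picard iteration: the $n$-th term is an $n$-fold space-time integral of a product of heat kernels weighted by $n$ copies of $g$, sandwiched by two $Q_t\phi$ factors. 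Iterating indefinitely and using the Brownian interpretation of the heat kernel, the whole series reorganizes into a two-particle Feynman--Kac expression, namely
\begin{align}\label{ppx2}
\int \E[V_1^\phi(t,x)^2]\,dx
= \int dx\;\Pi_{(x,x)}\Big[\,Q_{\tau}\phi(B_\tau)\,Q_{\tau}\phi(\widetilde B_\tau)\,\exp\Big(\int_0^\tau g(B_s,\widetilde B_s)\,ds\Big)\Big]
\end{align}
with an appropriate time reversal (here I am using $B,\widetilde B$ independent Brownian motions as in Lemma \ref{l1.1}, and $\tau$ ranging up to $t$); the precise bookkeeping of which time variable the exponential runs over is the sort of routine but fiddly step I would carry out carefully rather than sketch. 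The upshot is a bound of the form $\int \E[V_1^\phi(t,x)^2]\,dx \le \sup_{x,y}\Pi_{(x,y)}\big[e^{q\int_0^\infty g(B_s,\widetilde B_s)ds}\big]^{1/q'}$ times an $L^{p}$-type norm of the two-particle functional $(x,y)\mapsto Q_t\phi(x)Q_t\phi(y)$, after an application of H\"older's inequality to split the exponential weight off.

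The second ingredient is to show the remaining (weight-free) integral stays bounded as $t\to\infty$. With $g\equiv 0$, \eqref{ppx2} is exactly $\int (Q_t\phi(x))^2 dx$ up to the two-particle coupling, and since $Q_t\phi(x)=\int_0^t P_s\phi(x)\,ds \uparrow \int_0^\infty P_s\phi(x)\,ds =: G\phi(x)$, the Green potential of $\phi$, finiteness reduces to $\int (G\phi(x))^2\,dx<\infty$. This is where $d\ge 5$ enters: $G\phi(x) \asymp |x|^{-(d-2)}$ at infinity for compactly supported $\phi$, so $(G\phi)^2 \asymp |x|^{-2(d-2)}$ is integrable at infinity precisely when $2(d-2)>d$, i.e. $d>4$. (This is the same dimensional threshold as in Iscoe's CLT normalization $b_t=t^{1/2}$ for $d\ge 5$.) Monotone convergence in $t$ then gives the existence and finiteness of the limit for the $g\equiv 0$ piece, and the H\"older split from Lemma \ref{l1.1} transfers this to the true $m(t,x,y)$: one bounds $\Pi_{(x,x)}[Q_t\phi(B_\tau)Q_t\phi(\widetilde B_\tau)]$ by $\|G\phi\|_\infty$ times a single Green-potential factor, or more cleanly estimates $m(t,x,y)\le C(G\phi(x)\vee G\phi(y))\cdot(\text{heat-kernel convolution of }g)$, keeping everything integrable in $x$.

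The main obstacle I anticipate is making the Feynman--Kac representation \eqref{ppx2} rigorous: one must justify convergence of the Picard series (for which Lemma \ref{l1.1} with a large enough $q$ is exactly the tool — it controls the factorial growth coming from the $n$ copies of $g$), correctly handle the time reversal so that the heat kernels $p_{t-s}$ assemble into a genuine Brownian expectation, and verify the stochastic Fubini steps needed to pass from \eqref{ae1.33} to \eqref{ppx1}. A cleaner alternative that avoids the full representation: work directly with the Gronwall-type inequality obtained by integrating \eqref{ppx1} in $x=y$ and in $(x,y)$, i.e. set $h(t):=\sup_x G\phi(x)^{-1}\!\int \E[V_1^\phi(t,x)V_1^\phi(t,x')]\,dx'$ or a similar quantity, use \eqref{e0.0} to bound the $g$-kernel and the fact that $\int p_{t-s}(x,x')\,dx' =1$, and close a linear integral inequality whose kernel has finite total mass by Lemma \ref{l1.1}; then $\sup_t h(t)<\infty$ plus the $d\ge5$ integrability of $(G\phi)^2$ finishes it. Either way, once the uniform-in-$t$ $L^1_x$ bound is in hand, monotonicity of $t\mapsto V_1^\phi(t,x)$ (from \eqref{ea6.6}) and monotone convergence deliver the stated limit.
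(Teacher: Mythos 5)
Your proposal follows essentially the same route the paper takes: identify $\E[V_1^\phi(t,x)V_1^\phi(t,y)]$ via a two-particle moment recursion, solve it by a Feynman--Kac representation involving the weight $\exp(\int_0^{\cdot} g(B_u,\widetilde B_u)\,du)$, split this weight off by H\"older's inequality together with the exponential-moment bound of Lemma~\ref{l1.1}, and close with the Green-potential integrability that forces $d\geq 5$. The main ingredients are correctly identified, and your heuristic $2(d-2)>d$ does locate the right threshold. Note that the representation you are rebuilding by Picard iteration is already established in Lemma~\ref{l2.2}, so you would be better off simply invoking it.

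However, there are two concrete gaps. First, your explicit expression \eqref{ppx2} is not the right Feynman--Kac solution. The integral equation \eqref{ppx1} (equivalently, the PDE with inhomogeneous term $\phi(x)Q_t\phi(y)+\phi(y)Q_t\phi(x)$) is solved by
\begin{align*}
V^{\phi,\phi}_t(x,y)=\Pi_{(x,y)}\Big\{\int_0^t \big[\phi(B_s)Q_{t-s}\phi(\widetilde B_s)+\phi(\widetilde B_s)Q_{t-s}\phi(B_s)\big]\,e^{\int_0^s g(B_u,\widetilde B_u)\,du}\,ds\Big\},
\end{align*}
i.e.\ a time integral in which one particle carries $\phi(B_s)$ and the other carries $Q_{t-s}\phi(\widetilde B_s)$, not a two-sided product $Q_\tau\phi(B_\tau)Q_\tau\phi(\widetilde B_\tau)$ evaluated at a single time; the subsequent estimates depend on getting this right. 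Second, the ``cleaner alternative'' you sketch of bounding the Brownian expectation by $\|G\phi\|_\infty$ times a \emph{single} Green-potential factor would fail: $G\phi(x)\asymp |x|^{2-d}$ and $\int_{|x|>1}|x|^{2-d}\,dx=\infty$ in every dimension, so a single factor is never integrable. You must retain the two-factor decay on the diagonal. Moreover, after H\"older with exponent $p>1$, the two factors decay like $|x|^{2-d/p}$ rather than $|x|^{2-d}$, so integrability of $V^{\phi,\phi}_t(x,x)\lesssim |x|^{4-2d/p}$ requires $p<\tfrac{2d}{d+4}$, which for $d=5$ means $p<\tfrac{10}{9}$ --- exactly the constraint the paper tracks through $\widetilde Q(t,x)$, Lemma~\ref{la4.1}, and Corollary~\ref{ac4.6}. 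If you correct the Feynman--Kac formula and carry the $p$-dependence through to the very end, your argument closes as in the paper.
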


Using the above with Fatou's lemma, we get
\begin{align*}
 \E\Big[\lim_{t\to \infty} \int  V_1^\phi(t, x)^2 dx\Big] \leq \liminf_{t\to \infty} \int  \E[V_1^\phi(t, x)^2] dx<\infty.
\end{align*}
Hence with $\P$-probability one, 
\begin{align}\label{ea6.7}
\sigma(W,\phi):= \Big(\lim_{t\to \infty} \int  V_1^\phi(t, x)^2 dx \Big)^{1/2}>0 \quad \text{ exists and is finite.}
\end{align}
It follows that
\begin{align}\label{ea6.3}
  \lim_{T\to \infty} I_1(T, \phi)=\frac{1}{2}   \sigma(W,\phi)^2, \quad \P\text{-a.s.}
\end{align}

 We will prove the following lemma in Section \ref{as6} to deal with the other two terms.
 
\begin{lemma}\label{al4.2}
Let $d\geq 5$ and $\alpha>2$. For any $\phi \in C_c^+(\R^d)$, we have
\begin{align}\label{ea6.4}
  \lim_{T\to \infty} \E\Big(|I_2(T, \phi)|\Big)=0,
\end{align}
and 
\begin{align}\label{ea6.5}
  \lim_{T\to \infty} \E[I_3(T, \phi)^2]=0.
\end{align}
\end{lemma}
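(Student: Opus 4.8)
The plan is to exploit that $v_T^\phi=V_1^\phi-u_T^\phi$ is small: its mild equation \eqref{ae7.34} has an inhomogeneous term carrying the prefactor $T^{-1/2}$, while the multiplicative-noise term of \eqref{ae7.34} is prevented from amplifying it by the exponential-moment estimate \eqref{e1}. I would use throughout the uniform bound $0\le u_T^\phi(t,x)\le V_1^\phi(t,x)\le V_\infty(x):=\lim_{s\to\infty}V_1^\phi(s,x)$, which exists a.s.\ by the monotonicity \eqref{ea6.6} and obeys $\int\E[V_\infty(x)^2]\,dx<\infty$ by Lemma \ref{al4.1} and monotone convergence; together with the Brownian Green-function bound $\int_0^t\!\int p_{t-s}(x,y)V_\infty(y)^2\,dy\,ds\le\int_0^\infty P_r(V_\infty^2)(x)\,dr=c_d\int|x-y|^{-(d-2)}V_\infty(y)^2\,dy$ (finite for $d\ge3$), this controls, uniformly in $t$, the contribution of the inhomogeneous term $F_T(t,x):=\tfrac1{2T^{1/2}}\int_0^t\!\int p_{t-s}(x,y)u_T^\phi(s,y)^2\,dy\,ds$ of \eqref{ae7.34} by $O(T^{-1/2})$ times a fixed kernel.

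For $\E[|I_2|]\to0$: by \eqref{ae7.31} one has $I_2\ge0$, so $\E[|I_2|]=\E[I_2]$, and $(V_1^\phi)^2-(u_T^\phi)^2=v_T^\phi(V_1^\phi+u_T^\phi)\le 2v_T^\phi V_1^\phi$; it thus suffices to show $\int\E[v_T^\phi(t,x)V_1^\phi(t,x)]\,dx\le CT^{-1/2}$ uniformly in $t$, since then $\E[I_2]\le CT^{-1/2}\to0$. Writing $v_T^\phi=F_T+M_T$ (with $M_T$ the stochastic-integral term of \eqref{ae7.34}) and pairing with $V_1^\phi$ through its mild form \eqref{ae1.33}, the It\^o isometry for the colored noise $W$ yields for the two-point function $\Psi_T(t,y,y'):=\E[v_T^\phi(t,y)V_1^\phi(t,y')]$ the closed linear equation
\begin{align*}
\Psi_T(t,y,y')=A_T(t,y,y')+\int_0^t\!\!\int\!\!\int p_{t-s}(y,z)\,p_{t-s}(y',z')\,g(z,z')\,\Psi_T(s,z,z')\,dz\,dz'\,ds,
\end{align*}
with forcing $A_T(t,y,y'):=\E[F_T(t,y)V_1^\phi(t,y')]=O(T^{-1/2})$ by the first paragraph. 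I would then iterate this equation and recognise the $n$-fold iterated kernel as an expectation over two independent Brownian motions $(B,\widetilde B)$ weighted by $\exp(\int_0^t g(B_r,\widetilde B_r)\,dr)$; the bound \eqref{e1} (with $q=q(d,\alpha)$ as fixed in the text) makes this Neumann series summable and bounded by a fixed multiple of the forcing, so $\Psi_T(t,y,y')=O(T^{-1/2})$ uniformly in $t$, and taking $y=y'$ and integrating in $x$ (convergent for $d\ge5$) finishes this part.

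For $\E[I_3(T,\phi)^2]\to0$: since the inner integral in \eqref{ae9.32} is a genuine stochastic integral against $W$, the It\^o isometry gives $\E[I_3(T,\phi)^2]=\tfrac1T\int_0^T\E\big[\int\!\int v_T^\phi(t,x)v_T^\phi(t,x')g(x,x')\,dx\,dx'\big]\,dt$. Using $g\ge0$ and $0\le v_T^\phi\le V_1^\phi$, the inner expectation is at most $\int\!\int g(y,y')\,\Psi_T(t,y,y')\,dy\,dy'$, which by the bound $\Psi_T=O(T^{-1/2})$ just obtained (and the $g$-integrability of the limiting kernel, valid for $d\ge5$) is $O(T^{-1/2})$ uniformly in $t$; hence $\E[I_3(T,\phi)^2]=O(T^{-1/2})\to0$.

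The hard part will be the \emph{$t$-uniformity} of $\Psi_T=O(T^{-1/2})$: the multiplicative-noise feedback in \eqref{ae7.34} must not inflate $v_T^\phi$ by an exponential-in-$t$ factor, which a crude Gronwall estimate using only $g\le\eps$ would produce. This is exactly what the weak-environment hypothesis \eqref{e0.0}, via Lemma \ref{l1.1}, excludes: the Neumann series for $\Psi_T$ assembles into the two-path exponential functional $\Pi_{(y,y')}\big[(\cdots)\,e^{\int_0^t g(B_r,\widetilde B_r)\,dr}\big]$, which \eqref{e1} bounds uniformly in $t$. The remaining ingredients — the monotone limit $V_\infty$, the Green-function bound, and the finiteness (and $g$-integrability) of the third-/fourth-moment quantities entering the forcing — are supplied by the moment formulas and estimates of Sections \ref{as3}–\ref{as4}, which is also where the restriction $d\ge5$ is used.
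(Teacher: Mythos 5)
Your treatment of $I_2$ is essentially the paper's: you bound $\E[V_1^\phi v_T^\phi]$ by the solution of a linear renewal equation with small forcing and close it via a Neumann series whose kernel is controlled by the exponential moment estimate of Lemma \ref{l1.1}. The paper organizes the same idea through the comparison Lemma \ref{l4.3} plus Feynman--Kac instead of an explicit series, and its display \eqref{ae7.36} uses a slightly cruder product bound on the forcing, but these are presentational differences; your decomposition $\Psi_T = A_T + (\text{linear part})$ with $A_T := \E[F_T V_1^\phi]$ is in fact cleaner in that it absorbs the cross-term $\E[F_T\cdot(\text{stochastic integral of }V_1^\phi)]$ into the forcing. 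Provided one controls $A_T$ via the third-moment estimate (in the spirit of Lemma \ref{al6.3}), this part is fine.

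Your treatment of $I_3$, however, has a genuine gap. You replace $\E[v_T^\phi(t,x)v_T^\phi(t,y)]$ by $\E[v_T^\phi(t,x)V_1^\phi(t,y)] = \Psi_T(t,x,y)$, which carries only \emph{one} factor of $T^{-1/2}$ from the inhomogeneous term $F_T$. You then claim $\int\!\int \Psi_T(t,y,y')g(y,y')\,dy\,dy' = O(T^{-1/2})$ \emph{uniformly in} $t$, invoking ``$g$-integrability of the limiting kernel, valid for $d\ge 5$''. That uniform bound is false for $\alpha\in(2,4]$: the spatial decay of the kernels produced by the Neumann series is of the type $I_t(y)I_t(y')$, and by Lemma \ref{al5.1}(a) one only has $\int\!\int I_t(x)I_t(y)g(x,y)\,dx\,dy \le C\,t^{2+d-\frac{d}{p}-\frac{\gamma}{2}}$ with $\gamma<\alpha\wedge 4$, an exponent that is strictly positive (close to $1-\delta/2$ after optimizing $p$) once $\alpha\le 4$; the uniform bound of Lemma \ref{al5.1}(b) requires $d\wedge\alpha>4$. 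Plugging this into your estimate gives $\E[I_3^2]\lesssim T^{-1}\cdot T^{-1/2}\int_0^T t^{1-\delta/2}\,dt \sim T^{1/2-\delta/2}$, which diverges for the small $\delta$ forced when $\alpha$ is close to $2$. The point you miss is that $\E[v_T^\phi(t,x)v_T^\phi(t,y)]$ must be estimated \emph{keeping both factors of} $v_T^\phi$, so that the comparison function carries a $T^{-1}$ prefactor (as in the paper's $F_0$ from \eqref{ae9.2}, whose forcing is $T^{-1}H_0H_0$); then $\E[I_3^2]\lesssim T^{-2}\int_0^T t^{1-\delta/2}\,dt\sim T^{-\delta/2}\to 0$, which works for all $\alpha>2$. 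This requires a fourth-moment estimate on $V_1^\phi$ (Lemma \ref{al6.3}) rather than the third-moment control your $\Psi_T$-based argument uses, and the $T^{-1}$ factor, not just $T^{-1/2}$, is essential.
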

 
 We are ready to finish the proof of Theorem \ref{t2}.
 
\begin{proof}[Proof of Theorem \ref{t2}  assuming Lemmas \ref{al4.1} and \ref{al4.2}]
We conclude from \eqref{ea7.1}, \eqref{ea6.3}, \eqref{ea6.4} and \eqref{ea6.5} that for any sequence $T_n \to \infty$, there is a subsequence $T_{n_k}\to \infty$ such that
\begin{align*} 
T^{-1/2}\la \lambda, v^{\phi}_T(T) \ra= I_1({T_{n_k}}, \phi)- I_2({T_{n_k}}, \phi)+I_3({T_{n_k}}, \phi) \to \frac{1}{2}   \sigma(W,\phi)^2, \quad \P\text{-a.s.}
 \end{align*}
 Now recall \eqref{ea7.2} to see that
\begin{align*}  
   \lim_{{T_{n_k}}\to \infty} \E^W (e^{- Z_{T_{n_k}}^\phi})= e^{\frac{1}{2}   \sigma(W,\phi)^2}, \quad \P\text{-a.s.},
\end{align*}
thus giving \eqref{cea6.7} as required.
\end{proof}

 The following sections will give the proofs of the three Lemmas \ref{al2.1}-\ref{al4.2}.

\section{Moment formulas}\label{as3}

To prove the remaining lemmas in Section \ref{as3}, we need the second-moment formulas for $Y_t$. Let $\mu \in \cM(\R^d)$. Recall from \eqref{ed1.1} the conditional Laplace transform that for any $\phi \in C_c^+(\R^d)$ and $\theta \geq 0$, we have
\begin{align} \label{e2.2}
 \E_\mu^W (e^{-\theta Y_t(\phi)  })= e^{-\la \mu, U^{\theta\phi}(t,\cdot)\ra},
\end{align}
where $U^{\theta\phi}\geq 0$ satisfies
\begin{align}\label{e9.1}
U^{\theta\phi}(t,x)=  \theta  Q_t \phi(x)  &-\frac{1}{2}\int_0^t  ds \int p_{t-s}(x,z) (U^{\theta\phi}(s,z))^2  dz\nn \\
&+\int_0^t  \int p_{t-s}(x,z)  U^{\theta\phi}(s,z) W(ds,z) dz.
\end{align}
For each $n\geq 1$, define
\begin{align}\label{e2.1}
V_n^\phi(t, x)=(-1)^{n-1}\frac{\partial^n}{\partial \theta^n} U^{\theta \phi}(t,x)|_{\theta=0}.
\end{align}
  By differentiating \eqref{e9.1} with respect to $\theta$ and letting $\theta=0$, one can check that $V_1^\phi(t)$ satisfies 
\begin{align}\label{e10.5}
V_1^\phi(t,x)=  &Q_{t} \phi(x) +\int_0^t  \int p_{t-s}(x,z)  V_1^\phi(s, z) W(ds,z) dz.
\end{align}
Similar to the derivation of Lemma 2.1, we may iteratively differentiate \eqref{e9.1} with respect to $\theta$ to get that for any $n\geq 2$,  
\begin{align}\label{ae6.11}
V_n^\phi(t,x)=  &\sum_{k=1}^{n-1} \binom{n-1}{k}   \int_0^t ds \int p_{t-s} (x,z)V_{n-k}^\phi(s,z)V_k^\phi(s,z) dz\nn\\
&+\int_0^t  \int p_{t-s}(x,z)  V_n^\phi(s, z) W(ds,z) dz.
\end{align}

Although we only need the second-moment formulas for the proof in the current work, we state the following lemmas of all moments for completeness. 

\begin{lemma}\label{l2.1}
For any $t\geq 0$ and $\phi\in C_c^+(\R)$, we have
\begin{align}\label{e10.22}
 \E_\mu^W [Y_t(\phi)^n]= L^{(n)}_t, \quad \forall n\geq 1,
\end{align}
where
\begin{align}\label{edd10.11}
L^{(0)}_t=&1, \quad L^{(1)}_t=   \la \mu, V_1^\phi(t)\ra; \\
L^{(n)}_t=&\sum_{k=0}^{n-1} \binom{n-1}{k}   \la \mu, V_{n-k}^\phi(t) \ra \cdot L_t^{(k)} , \quad \forall n\geq 2.\nn
\end{align}
\end{lemma}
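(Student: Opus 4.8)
The plan is to proceed by induction on $n$, differentiating the conditional Laplace transform \eqref{e2.2} exactly $n$ times with respect to $\theta$ and evaluating at $\theta=0$. For $n\geq 1$, since $\E_\mu^W(e^{-\theta Y_t(\phi)}) = e^{-\la\mu, U^{\theta\phi}(t,\cdot)\ra}$, one has
\[
\E_\mu^W[Y_t(\phi)^n] = (-1)^n \frac{\partial^n}{\partial\theta^n}\E_\mu^W(e^{-\theta Y_t(\phi)})\Big|_{\theta=0} = (-1)^n \frac{\partial^n}{\partial\theta^n} e^{-\la\mu, U^{\theta\phi}(t,\cdot)\ra}\Big|_{\theta=0}.
\]
So the combinatorial heart of the matter is to compute the $n$-th derivative of the composite function $\theta \mapsto e^{-\la\mu, U^{\theta\phi}(t)\ra}$. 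Writing $G(\theta) = -\la\mu, U^{\theta\phi}(t)\ra$, one has $G^{(k)}(0) = (-1)\la\mu, \partial_\theta^k U^{\theta\phi}(t)\ra|_{\theta=0} = -(-1)^{k-1}(-1)\la\mu, V_k^\phi(t)\ra$ for $k\geq 1$ by the definition \eqref{e2.1}; tracking signs, $G^{(k)}(0) = (-1)^k\la\mu, V_k^\phi(t)\ra$. Note also $U^{0\cdot\phi}\equiv 0$, so $G(0)=0$ and $e^{G(0)}=1$, giving $L_t^{(0)}=1$.

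The key step is then to differentiate the relation $\frac{d}{d\theta}e^{G(\theta)} = G'(\theta) e^{G(\theta)}$ repeatedly. Set $H(\theta) = e^{G(\theta)} = \E_\mu^W(e^{-\theta Y_t(\phi)})$, so $\E_\mu^W[Y_t(\phi)^n] = (-1)^n H^{(n)}(0)$. From $H' = G' H$ and Leibniz's rule,
\[
H^{(n)}(\theta) = \sum_{k=0}^{n-1}\binom{n-1}{k} G^{(n-k)}(\theta)\, H^{(k)}(\theta).
\]
Evaluating at $\theta=0$ and multiplying by $(-1)^n$: since $G^{(n-k)}(0) = (-1)^{n-k}\la\mu, V_{n-k}^\phi(t)\ra$ and $H^{(k)}(0) = (-1)^k L_t^{(k)}$, the sign $(-1)^n = (-1)^{n-k}\cdot(-1)^k$ distributes perfectly, yielding
\[
L_t^{(n)} = (-1)^n H^{(n)}(0) = \sum_{k=0}^{n-1}\binom{n-1}{k}\la\mu, V_{n-k}^\phi(t)\ra\, L_t^{(k)},
\]
which is exactly \eqref{edd10.11}; the case $n=1$ gives $L_t^{(1)} = \la\mu, V_1^\phi(t)\ra$. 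The formulas \eqref{e10.5} and \eqref{ae6.11} for $V_1^\phi$ and $V_n^\phi$ themselves are obtained, as indicated in the text, by differentiating the mild form \eqref{e9.1} of the SPDE the corresponding number of times in $\theta$ and evaluating at $\theta=0$, again via Leibniz's rule applied to the quadratic term $(U^{\theta\phi})^2$.

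The main obstacle is justifying the interchange of differentiation (in $\theta$) with the conditional expectation $\E_\mu^W$ and, more delicately, with the stochastic integral against $W$ appearing in \eqref{e9.1} — i.e. showing that $\theta\mapsto U^{\theta\phi}(t,x)$ is genuinely $C^\infty$ in $\theta$ with derivatives satisfying the claimed linear SPDEs, and that all the relevant moments are finite so that the formal Leibniz expansion is legitimate. This is a standard-but-technical regularity argument for the parameter-dependent SPDE \eqref{e9.1}; the text explicitly says the derivation follows "similar to the derivation of Lemma 2.1" and refers to \cite{MX07}, so I would cite that and the smoothness of solutions of linear SPDEs (moment bounds obtained by Gronwall/BDG estimates, which also reappear quantitatively in Section \ref{as4}) rather than redo it from scratch. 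Everything else is bookkeeping with binomial coefficients and signs.
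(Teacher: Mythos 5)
Your proof is correct and follows the same route as the paper: differentiate the conditional Laplace transform $n$ times, write the derivative of $e^{G(\theta)}$ as $G'(\theta)e^{G(\theta)}$, and apply Leibniz's rule to get the recursion $H^{(n)} = \sum_{k=0}^{n-1}\binom{n-1}{k}G^{(n-k)}H^{(k)}$, then evaluate at $\theta=0$ and cancel signs using the definition \eqref{e2.1} of $V_n^\phi$ and $L_t^{(n)}$. (There is a small sign slip in your intermediate line ``$-(-1)^{k-1}(-1)\la\mu,V_k^\phi(t)\ra$'', which should read $-(-1)^{k-1}\la\mu,V_k^\phi(t)\ra$, but your stated conclusion $G^{(k)}(0)=(-1)^k\la\mu,V_k^\phi(t)\ra$ is correct, so this is only a typo.)
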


\begin{proof}
 Taking the $n$-th derivative with respect to $\theta$ on both sides of \eqref{e2.2}, we obtain that
\begin{align}\label{ea9.2}
 \E_\mu^W \Big[(-Y_t(\phi))^n e^{-\theta Y_t(\phi)}\Big]=  \frac{\partial^n}{\partial \theta^n} e^{-\la \mu, U^{\theta\phi}(t)\ra}.
\end{align}
Define
\begin{align}\label{e9.2}
L^{(n)}_t:=(-1)^n \frac{\partial^n}{\partial \theta^n} e^{-\la \mu, U^{\theta \phi}(t)\ra}|_{\theta=0}.
\end{align}
Let $\theta=0$ in \eqref{ea9.2} to see that
 \begin{align*} 
 \E_\mu^W [Y_t(\phi)^n]=  L^{(n)}_t .
\end{align*}
To obtain \eqref{edd10.11}, we note that
 \begin{align*} 
 \frac{\partial}{\partial \theta} e^{-\la \mu, U^{\theta\phi}(t)\ra}=-\Big\la \mu,  \frac{\partial}{\partial \theta} U^{\theta\phi}(t)\Big\ra e^{-\la \mu, U^{\theta\phi}(t)\ra} 
\end{align*}
Hence, for any $n\geq 1$, by Leibniz rule (i.e., high order product rule), we get
 \begin{align*} 
 \frac{\partial^n}{\partial \theta^n} e^{-\la \mu, U^{\theta\phi}(t)\ra}&=-\sum_{k=0}^{n-1} \binom{n-1}{k}  \frac{\partial^{n-1-k}}{\partial \theta^{n-1-k}}  \Big\la \mu,  \frac{\partial}{\partial \theta} U^{\theta\phi}(t)\Big\ra  \frac{\partial^{k}}{\partial \theta^{k}}  e^{-\la \mu, U^{\theta\phi}(t)\ra} \\
 &=-\sum_{k=0}^{n-1} \binom{n-1}{k}   \Big \la \mu,  \frac{\partial^{n-k}}{\partial \theta^{n-k}} U^{\theta\phi}(t)\Big\ra  \frac{\partial^{k}}{\partial \theta^{k}}  e^{-\la \mu, U^{\theta\phi}(t)\ra}.
\end{align*}
Given \eqref{e2.1} and \eqref{e9.2}, we may let $\theta=0$ in the above to complete the proof.
\end{proof}

In particular, when $n=2$, the above lemma implies that
\begin{align}\label{e10.11}
 \E_\mu^W [Y_t(\phi)^2]= \la \mu, V_2^\phi(t)\ra+\la \mu, V_1^\phi(t)\ra^2. 
\end{align}
Hence, we need the following moment formulas for $V_1^\phi(t,x)$ and $V_2^\phi(t,x)$.

\begin{lemma}\label{l2.2}
For any $t\geq 0$ and $\phi, \psi \in C_c^+(\R^d)$, we have for all $x,y\in \R^d$,
\begin{align}\label{ea10.11}
 \E [V_1^\phi(t,x)]=  Q_t\phi(x), \quad  \E [V_1^\phi(t,x) V_1^\psi(t,y)]=  V^{\phi, \psi}_t(x,y),
 \end{align}
and
\begin{align}\label{ea10.12}
 \E [V_2^\phi(t,x)]=\int_0^t ds  \int p_{t-s}(x,z) V^{\phi, \phi}_s(z,z) dz,
 \end{align}
 where
\begin{align} \label{e10.23}
V^{\phi, \psi}_t(x,y):=\Pi_{(x,y)}\Big\{\int_0^t  \Big[\phi(B_s) Q_{t-s}\psi(\tilde{B}_s)+\psi(\tilde{B}_s) Q_{t-s}\phi({B}_s)\Big] e^{\int_0^s g(B_u, \tilde{B}_u) du} ds\Big\}.
\end{align}
and $B_t, \tilde{B}_t$ are independent Brownian motions starting respectively  from $x,y\in \R^d$ under $\Pi_{(x,y)}$.  
\end{lemma}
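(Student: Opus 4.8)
The plan is to derive the three identities in Lemma~\ref{l2.2} directly from the mild forms \eqref{e10.5} and \eqref{ae6.11} by taking expectations and iterating, turning the stochastic-integral recursions into deterministic integral equations whose solutions are the claimed Brownian functionals.

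First I would compute $\E[V_1^\phi(t,x)]$. Taking expectations in \eqref{e10.5}, the stochastic integral $\int_0^t\int p_{t-s}(x,z)V_1^\phi(s,z)W(ds,z)dz$ is a mean-zero martingale (provided the relevant $L^2$ bounds hold, which can be checked by a Gronwall/Picard argument or cited from \cite{MX07}), so $\E[V_1^\phi(t,x)]=Q_t\phi(x)$. For the second-moment identity, I would apply It\^o's formula to the product $V_1^\phi(t,x)V_1^\psi(t,y)$ using the two mild equations; the cross-variation term of the two noise integrals contributes, by the covariance structure \eqref{aee2.2} of $W$, a term $\int_0^t ds \int\int p_{t-s}(x,z)p_{t-s}(y,z')g(z,z')V_1^\phi(s,z)V_1^\psi(s,z')\,dz\,dz'$. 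Taking expectations and setting $m_t(x,y):=\E[V_1^\phi(t,x)V_1^\psi(t,y)]$, one gets a closed linear integral equation:
\begin{align*}
m_t(x,y)= Q_t\phi(x)Q_t\psi(y) &+ \text{(drift cross terms)} \\
&+\int_0^t ds\int\!\!\int p_{t-s}(x,z)p_{t-s}(y,z')g(z,z')\,m_s(z,z')\,dz\,dz'.
\end{align*}
The cleanest route, mirroring Iscoe's computation, is instead to expand $V_1^\phi$ itself as a Picard/Wiener chaos series from \eqref{e10.5}, multiply the series for $V_1^\phi(t,x)$ and $V_1^\psi(t,y)$, and take expectations term by term; the Brownian-motion Feynman--Kac representation \eqref{e10.23} then emerges because pairing up $n$ noise factors produces exactly the $n$-fold time-ordered integral of $g(B_u,\tilde B_u)$ against the two independent Brownian semigroups, and summing over $n$ gives the exponential $e^{\int_0^s g(B_u,\tilde B_u)du}$. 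One verifies directly that $V^{\phi,\psi}_t(x,y)$ defined by \eqref{e10.23} solves the integral equation above, and uniqueness (again Gronwall, using $\|g\|_\infty<\infty$) identifies it with $m_t(x,y)$.

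For \eqref{ea10.12}, I would take expectations in the $n=2$ case of \eqref{ae6.11}: the noise integral vanishes in mean, the sum over $k$ has the single term $k=1$ giving $\int_0^t ds\int p_{t-s}(x,z)V_1^\phi(s,z)^2\,dz$, and by Fubini together with the already-established $\E[V_1^\phi(s,z)^2]=V^{\phi,\phi}_s(z,z)$ one obtains precisely $\int_0^t ds\int p_{t-s}(x,z)V^{\phi,\phi}_s(z,z)\,dz$.

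The main obstacle I anticipate is the analytic bookkeeping needed to justify the formal manipulations: interchanging expectation with the stochastic and ordinary integrals, showing the Picard/chaos series for $V_1^\phi$ converges in $L^2$ uniformly on compact time intervals, and controlling the iterated $g$-integrals so that the exponential in \eqref{e10.23} is finite (here $\|g\|_\infty<\infty$ suffices for finiteness, while Lemma~\ref{l1.1} is what later upgrades this to the quantitative bounds needed in Sections~\ref{as4}--\ref{as5}). None of these is deep, but they require the standard Gronwall-type estimates for the mild SPDE \eqref{e10.5}; I would either carry out one representative estimate and cite \cite{MX07} or \cite{Par} for the rest.
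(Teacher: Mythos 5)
Your plan is essentially the paper's proof. The first identity $\E[V_1^\phi(t,x)]=Q_t\phi(x)$ and the third, \eqref{ea10.12}, are obtained exactly as you describe (expectation of the mild form kills the stochastic integral; for $V_2^\phi$ the $n=2$ case of \eqref{ae6.11} plus the second-moment identity gives the result). For the middle identity the paper also starts from the closed equation for $m_t(x,y)=\E[V_1^\phi(t,x)V_1^\psi(t,y)]$ and identifies its solution with the Feynman--Kac expression; the paper differentiates the mild integral equation in $t$ to obtain the PDE \eqref{e1.5} and invokes Feynman--Kac, whereas you propose verifying that \eqref{e10.23} solves the mild integral equation and invoking Gronwall uniqueness (or a chaos expansion). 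These are equivalent in substance; yours avoids justifying the time-differentiation, while the paper's is a one-line application of Feynman--Kac.

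One small slip to fix: if you derive the closed equation directly from the mild form \eqref{e10.5}, the inhomogeneous term is precisely $Q_t\phi(x)Q_t\psi(y)$ and there are \emph{no} additional drift cross terms — the mixed terms $Q_t\phi(x)\cdot(\text{stochastic integral})$ vanish in mean. If instead you run It\^o on the SPDE form \eqref{ae10.33}, then the drift forcing $\phi(x)V_1^\psi+\psi(y)V_1^\phi$ does appear, but upon Duhamel + taking expectations it integrates up to exactly $Q_t\phi(x)Q_t\psi(y)$, so you should not list both. Your placeholder ``(drift cross terms)'' alongside $Q_t\phi(x)Q_t\psi(y)$ double-counts; drop one of the two depending on which form you use.
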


 \begin{proof}
 Let $t\geq 0$, $x,y\in \R^d$ and $\phi, \psi\in C_c^+(\R^d)$.
 It is immediate from \eqref{e10.5} that  $$\E [V_1^\phi(t,x)]=  Q_t\phi(x).$$ Next, use \eqref{e10.5} again to see that 
\begin{align*}  
\E( V_1^\phi(t,x) &  V_1^\psi(t,y))= Q_t \phi(x) \cdot Q_t \psi(y)\nn\\
&+\int_0^t ds\int_{\R^{2d}} p_{t-s}(x,z)p_{t-s}(y,w) \E( V_1^\phi(s,z)  V_1^\psi(s,w)) g(z,w) dz dw.
\end{align*}
Write $F_t(x,y)=\E( V_1^\phi(t,x)    V_1^\psi(t,y))$. One may easily derive from the above that
\begin{align} \label{e1.5}
 \frac{\partial F_t(x,y)}{\partial t}=\frac{\Delta_{(x,y)}}{2} F_t(x,y)+g(x,y) F_t(x,y)+  \phi(x)  Q_t \psi(y)+  \psi(y)  Q_t \phi(x).
\end{align}
By Feyman-Kac's formula applied to \eqref{e1.5}, along with $F_0(x,y)=0$ we get
\begin{align*} 
\E( V_1^\phi(t,x) &  V_1^\phi(t,y))=F_t(x,y)=V^{\phi, \psi}_t(x,y),
\end{align*}
where $V^{\phi, \psi}_t(x,y)$ is as in \eqref{e10.23}, and hence \eqref{ea10.11} follows.
 
 Next, by using \eqref{e6.11} with $n=2$, we get 
 \begin{align}\label{e6.11}
V_2^\phi(t,x)=  &  \int_0^t ds \int p_{t-s} (x,z)V_{1}^\phi(s,z)^2 dz\nn\\
&+\int_0^t  \int p_{t-s}(x,z)  V_2^\phi(s, z) W(ds,z) dz.
\end{align}
By taking expectations on both sides, we obtain \eqref{ea10.12} from \eqref{ea10.11}.
  \end{proof}

The following is immediate by combining the above two lemmas.
 
\begin{corollary}\label{c2.1}
For every $t\geq 0$ and $\phi \in C_c^+(\R^d)$, we have
\begin{align} \label{e3}
 \E_\mu [Y_t(\phi)]= \la \mu, Q_t\phi\ra,
\end{align}
and
\begin{align} \label{e10.7}
 \E_\mu [Y_t(\phi)^2]= \int &\int V^{\phi, \phi}_t(x,y)\mu(dx) \mu(dy)\nn\\
&+\int \mu(dx) \int_0^t ds \int p_{t-s}(x,y) V^{\phi, \phi}_s(y,y) dy,
\end{align}
 
\end{corollary}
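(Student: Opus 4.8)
\textbf{Proof proposal for Corollary \ref{c2.1}.} The plan is to simply specialize Lemmas \ref{l2.1} and \ref{l2.2} to the case $n=1,2$ and integrate the moment formulas for $V_1^\phi$ and $V_2^\phi$ against $\mu$. For the first moment, recall that taking $\theta$-derivative of \eqref{e2.2} at $\theta=0$ and using \eqref{edd10.11} gives $\E_\mu^W[Y_t(\phi)] = \la\mu, V_1^\phi(t)\ra$; taking $\P$-expectations and invoking the first identity in \eqref{ea10.11}, namely $\E[V_1^\phi(t,x)] = Q_t\phi(x)$, yields $\E_\mu[Y_t(\phi)] = \la\mu, Q_t\phi\ra$, which is \eqref{e3}. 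Here one should note that $Y_t(\phi) = \int_0^t X_s(\phi)\,ds$ is nonnegative for $\phi\in C_c^+(\R^d)$, and the finiteness of $\E_\mu[Y_t(\phi)]$ (needed to justify $\E_\mu = \E\circ\E_\mu^W$ and the interchange of $\E$ with the spatial integral) follows from the explicit bound $Q_t\phi(x) = \int_0^t P_s\phi(x)\,ds < \infty$ together with $\mu$ being a Radon measure and $\phi$ compactly supported.

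\textbf{Second moment.} For $n=2$, Lemma \ref{l2.1} (specifically \eqref{e10.11}) gives $\E_\mu^W[Y_t(\phi)^2] = \la\mu, V_2^\phi(t)\ra + \la\mu, V_1^\phi(t)\ra^2$. I would then take $\P$-expectations of both sides. The cross term: expand $\la\mu, V_1^\phi(t)\ra^2 = \int\int V_1^\phi(t,x)V_1^\phi(t,y)\,\mu(dx)\,\mu(dy)$ and use Fubini (justified by nonnegativity of $V_1^\phi$, which follows from \eqref{e10.5} / the comparison principle, or directly from $V_1^\phi(t,x) = \E_{\delta_x}^W[\la Y_t,\phi\ra]\ge 0$ as in \eqref{cea6.6}) to bring $\E$ inside, obtaining $\int\int \E[V_1^\phi(t,x)V_1^\phi(t,y)]\,\mu(dx)\,\mu(dy) = \int\int V_t^{\phi,\phi}(x,y)\,\mu(dx)\,\mu(dy)$ by the second identity in \eqref{ea10.11} with $\psi=\phi$. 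The linear term: similarly $\E[\la\mu, V_2^\phi(t)\ra] = \int \E[V_2^\phi(t,x)]\,\mu(dx) = \int \mu(dx)\int_0^t ds\int p_{t-s}(x,y)V_s^{\phi,\phi}(y,y)\,dy$ by \eqref{ea10.12}. Adding the two pieces gives exactly \eqref{e10.7}.

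\textbf{Main obstacle.} There is essentially no hard analytic content here — the corollary is a direct bookkeeping consequence of the two preceding lemmas. The only points requiring a word of care are (i) the measurability and integrability justifications for the various applications of Fubini and of the tower property $\E_\mu = \E[\E_\mu^W[\,\cdot\,]]$, all of which are clean because every quantity in sight is nonnegative (superprocess occupation measures and their first/second conditional moments $V_1^\phi, V_2^\phi$ are nonnegative), so Tonelli applies without integrability hypotheses and the identities hold in $[0,\infty]$; and (ii) checking the finiteness of the right-hand sides of \eqref{e3} and \eqref{e10.7}, which reduces to the boundedness of $Q_t\phi$ and of $V_t^{\phi,\phi}$ — the latter being controlled by \eqref{e1} of Lemma \ref{l1.1}, which bounds the exponential functional $\Pi_{(x,y)}[e^{\int_0^s g(B_u,\tilde B_u)\,du}]$ uniformly, combined with the compact support of $\phi$. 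Thus the proof is a two-paragraph verification and I would present it exactly as the two displayed computations above.
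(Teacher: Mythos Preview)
Your proposal is correct and follows exactly the route the paper takes: the paper simply states that the corollary ``is immediate by combining the above two lemmas'' (Lemmas \ref{l2.1} and \ref{l2.2}), and your argument spells out precisely that combination, with the added care of justifying the Fubini/Tonelli steps via nonnegativity.
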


 \section{Preliminary bounds of the second moments}\label{as4}

Using the moment formulas from the last section, we give the proof of Lemma  \ref{al4.1}  in this section. {\bf 
Throughout the rest of the paper, we fix $\phi \in C_c^+(\R^d)$ and let $K>2$ be large such that the function $\phi$ is supported on $\{x\in \R^d: |x|\leq K\}$.\\}

Recall from \eqref{ea10.11} that for any $t\geq 0$ and $x\in \R^d$,
\begin{align*}  
\E[V_1^\phi(t, x)^2]=V^{\phi, \phi}_t(x,x).
\end{align*}
Combining the monotonicity of $V_1^\phi(t, x)$ in $t$ (see \eqref{ea6.6}), we may reduce to proof of Lemma \ref{al4.1} to showing that when $d\geq 5$ and $\alpha>2$,
\begin{align}  \label{ae8.1}
\sup_{t\geq 0} \int V^{\phi, \phi}_t(x,x) dx <\infty.
\end{align}
The following bounds on $V^{\phi, \phi}_t(x,y)$ will be the key to the proof. \\

Let $p\in  (1,\frac{10}{9})$ and $q>1$ satisfy  $\frac{1}{p}+\frac{1}{q}=1$.  We will choose $p=p(d,\alpha)$ close to $1$ below.  So for $q=q(d,\alpha)>1$, we have \eqref{e1} from Lemma \ref{l1.1} holds by letting $g$ satisfy \eqref{e0.0} with $\eps$ small.\\

For each $t\geq 0$ and $x\in \R^d$, we define 
\begin{align}\label{ae9.41}
 \widetilde{Q}(t,x):=\int_0^t  \Big[\int_{|z|\leq K} p_s(x,z) dz\Big]^{1/p} ds,
\end{align}
where the dependence of $ \widetilde{Q}(t,x)$ on $p\in  (1,\frac{10}{9})$ and $K>0$ is suppressed for notation ease.\\

\no ${\bf Convention\ on\ Constants.}$ Constants whose value is unimportant and may change from line to line are denoted $C$. All these constants may depend on the dimension $d$, the covariance function $g$ and $\alpha$, the test function $\phi$, and the constant $p=p(d,\alpha)$. All these parameters, $d,\alpha, \phi, p$, will be fixed before picking $C$. Our constants $C$ will never depend on $t\geq 0$ or  $x\in \R^d$.\\

\begin{lemma} \label{al4.6}
Let  $d\geq 3$ and $\alpha>2$. For any $t\geq 0$ and $x,y \in \R^d$, we have
\[
V^{\phi, \phi}_t(x,y) \leq  C\cdot \widetilde{Q}(t,x)\widetilde{Q}(t,y).
\]
\end{lemma}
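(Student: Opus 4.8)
The plan is to bound $V^{\phi,\phi}_t(x,y)$ directly from its Feynman--Kac representation \eqref{e10.23}, separating the exponential functional of the random environment from the rest of the integrand via Hölder's inequality, and then controlling each piece. Recall from \eqref{e10.23} that
\[
V^{\phi,\phi}_t(x,y)=\Pi_{(x,y)}\Big\{\int_0^t\big[\phi(B_s)Q_{t-s}\phi(\widetilde B_s)+\phi(\widetilde B_s)Q_{t-s}\phi(B_s)\big]e^{\int_0^s g(B_u,\widetilde B_u)\,du}\,ds\Big\}.
\]
By symmetry in $(x,y)$ it suffices to bound the contribution of the first term $\phi(B_s)Q_{t-s}\phi(\widetilde B_s)$; the second is handled identically with the roles of $B$ and $\widetilde B$ interchanged. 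The first step is to apply Fubini (the integrand is nonnegative) to pull the time integral outside, and then for each fixed $s$ apply Hölder's inequality on the probability space $\Pi_{(x,y)}$ with exponents $p$ and $q$, writing
\[
\Pi_{(x,y)}\!\Big[\phi(B_s)Q_{t-s}\phi(\widetilde B_s)\,e^{\int_0^s g\,du}\Big]\le \Pi_{(x,y)}\!\Big[\big(\phi(B_s)Q_{t-s}\phi(\widetilde B_s)\big)^{p}\Big]^{1/p}\cdot \Pi_{(x,y)}\!\Big[e^{q\int_0^s g(B_u,\widetilde B_u)\,du}\Big]^{1/q}.
\]
Lemma \ref{l1.1} immediately gives $\Pi_{(x,y)}[e^{q\int_0^s g\,du}]\le \Pi_{(x,y)}[e^{q\int_0^\infty g\,du}]\le 2$ uniformly in $s,x,y$, so the second factor is bounded by $2^{1/q}\le 2$.

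For the first factor, I would use that $\phi$ is bounded, say by $\|\phi\|_\infty$, and is supported in $\{|z|\le K\}$, so $\phi(B_s)\le \|\phi\|_\infty \mathbf 1_{\{|B_s|\le K\}}$; similarly $Q_{t-s}\phi(\widetilde B_s)\le \|\phi\|_\infty\, Q_{t-s}\mathbf 1_{\{|\cdot|\le K\}}(\widetilde B_s)=\|\phi\|_\infty\int_0^{t-s}\!\!\int_{|z|\le K}p_r(\widetilde B_s,z)\,dz\,dr$. Since $B$ and $\widetilde B$ are independent under $\Pi_{(x,y)}$, the $p$-th moment factorizes:
\[
\Pi_{(x,y)}\big[(\phi(B_s)Q_{t-s}\phi(\widetilde B_s))^p\big]^{1/p}\le \|\phi\|_\infty^2\,\Pi_x\big[\mathbf 1_{\{|B_s|\le K\}}^p\big]^{1/p}\,\Pi_y\big[(Q_{t-s}\mathbf 1_{\{|\cdot|\le K\}}(\widetilde B_s))^p\big]^{1/p}.
\]
The first single-particle factor is exactly $\big(\int_{|z|\le K}p_s(x,z)\,dz\big)^{1/p}$, which is the integrand of $\widetilde Q(t,x)$ in \eqref{ae9.41}. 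For the second, one either bounds $Q_{t-s}\mathbf 1_{\{|\cdot|\le K\}}\le Q_\infty \mathbf 1_{\{|\cdot|\le K\}}$ (finite pointwise when $d\ge 3$ since $\int_0^\infty p_r(y,z)\,dr=c_d|y-z|^{2-d}$ is locally integrable) and uses that $\mathbf 1_{\{|B_s|\le K\}}\le 1$ trivially, or — to get the cleaner $\widetilde Q(t,y)$ form — observes that after integrating $s$ from $0$ to $t$, the combined $s$- and $r$-integrals reorganize into $\widetilde Q(t,y)$ times a uniformly bounded factor. Concretely, integrating the bound over $s\in[0,t]$ gives a double integral $\int_0^t (\int_{|z|\le K}p_s(x,z))^{1/p}\,\Pi_y[(\int_0^{t-s}\!\int_{|z|\le K}p_r(\widetilde B_s,z)\,dz\,dr)^p]^{1/p}\,ds$; the inner $r$-integral is at most $\int_0^\infty\int_{|z|\le K}p_r(\widetilde B_s,z)\,dz\,dr=:h(\widetilde B_s)$, a function that is bounded on $\R^d$ when $d\ge5$ and has at most a locally integrable singularity when $d\ge3$, so $\Pi_y[h(\widetilde B_s)^p]^{1/p}$ is itself controlled by $C\,(\int_{|z|\le K}p_s'(y,z))^{1/p}$-type quantities integrated against $s$, yielding the factor $\widetilde Q(t,y)$.

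The main obstacle I anticipate is precisely this bookkeeping of which particle carries the $\widetilde Q(t,x)$ factor and which carries $\widetilde Q(t,y)$, together with making sure the auxiliary $r$-integral $\int_0^\infty\int_{|z|\le K}p_r(w,z)\,dz\,dr$ is handled correctly: it is genuinely bounded in $w$ only for $d\ge5$ (decay $|w|^{2-d}$ away from the ball, integrable singularity $|w|^{2-d}$ inside when $d\ge3$), so for the stated range $d\ge3$ one must keep it inside the Brownian expectation and absorb its singularity via the density $p_s$, using $p>1$ close to $1$ so that $(\,\cdot\,)^p$ does not worsen integrability. Since $p\in(1,\tfrac{10}{9})$ is exactly the range fixed before the lemma, the $L^p$ norms of these locally-$L^{d/(d-2)-}$ functions remain finite, and one arrives at $V^{\phi,\phi}_t(x,y)\le C\,\widetilde Q(t,x)\widetilde Q(t,y)$ with $C$ depending only on $d,\alpha,\phi,p$ as claimed. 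All other steps — Fubini, Hölder, independence, the trivial bounds on $\phi$ — are routine.
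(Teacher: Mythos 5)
Your high-level strategy — Hölder in $L^p/L^q$ to separate the exponential functional (controlled by Lemma \ref{l1.1}) from the Brownian functional, then factor by independence — matches the paper's. But the crucial bookkeeping step is not carried out, and the ``concrete'' route you sketch does not actually deliver the stated bound.

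The gap is this: after applying Hölder with $s$ fixed but the inner $r$-integral $Q_{t-s}\phi(\widetilde B_s)=\int_0^{t-s}P_r\phi(\widetilde B_s)\,dr$ still inside the $L^p$-norm, you are left with $\Pi_y\big[(\int_0^{t-s}P_r\phi(\widetilde B_s)\,dr)^p\big]^{1/p}$, and you need to reorganize this into $\widetilde Q(t,y)$. Your ``concrete'' proposal bounds the inner integral by $h(\widetilde B_s):=\int_0^\infty\!\int_{|z|\le K}p_r(\widetilde B_s,z)\,dz\,dr$, the Green potential of the ball, which is in fact bounded on $\R^d$ for \emph{all} $d\ge3$, not only $d\ge5$. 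But this bound is fatal: $h(w)\sim c|w|^{2-d}$ for $|w|$ large, and one computes $\int_0^t\Pi_y\big[h(\widetilde B_s)^p\big]^{1/p}ds\sim C|y|^{4-d}$ for large $|y|$ (and the $s$-integral actually \emph{diverges} when $d=3,4$). Since $\widetilde Q(t,y)\lesssim |y|^{2-d/p}$ with $p\in(1,\tfrac{10}{9})$, and $4-d>2-\tfrac{d}{p}$ for all $d\ge3$ in that range, your bound is strictly weaker than $C\,\widetilde Q(t,x)\widetilde Q(t,y)$ (and for $d=3,4$ not finite). So the statement is not proved.

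The missing ingredients, which the paper uses, are: \emph{first} pull the $r$-integral outside by Fubini, so Hölder is applied to $\Pi_{(x,y)}\big[\phi(B_s)P_r\phi(\widetilde B_s)\,e^{\int_0^s g}\big]$ with \emph{both} $s$ and $r$ fixed; then, in the single-particle factor $\int p_s(y,w)[P_r\phi(w)]^p dw$, apply Jensen's inequality to get $[P_r\phi(w)]^p\le P_r(\phi^p)(w)$, followed by Chapman--Kolmogorov $\int p_s(y,w)p_r(w,u)\,dw=p_{s+r}(y,u)$. This yields $\big[\int_{|u|\le K}p_{s+r}(y,u)\,du\big]^{1/p}$, and integrating in $r$ over $[0,t-s]$ (change of variable $u=s+r$) gives precisely $\widetilde Q(t,y)$ after dropping the constraint $u\ge s$. (Equivalently, one could keep your ordering of steps but then invoke Minkowski's integral inequality to move the $r$-integral outside the $L^p$-norm, followed by the same Jensen/Chapman--Kolmogorov argument; you did not invoke Minkowski, and without it or the Fubini-first reordering the proof does not close.)
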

\begin{proof}
Fix any $t\geq 0$ and $x,y \in \R^d$.
Recall $V^{\phi, \phi}_t(x,y)$ from \eqref{e10.23}. By symmetry, it suffices to prove that
\begin{align*} 
I:=& \int_0^t    \Pi_{(x,y)}\Big\{ \phi(B_s) Q_{t-s}\phi(\tilde{B}_s) e^{\int_0^s g(B_u, \tilde{B}_u) du}  \Big\} ds\leq  C\cdot \widetilde{Q}(t,x)\widetilde{Q}(t,y).
\end{align*}
To do this, we note that (recall $Q_t\phi$ from \eqref{ae10.99})
\begin{align*} 
I=&  \int_0^t  ds \int_0^{t-s} dr   \Pi_{(x,y)}\Big\{ \phi(B_s) P_r\phi(\tilde{B}_s) e^{\int_0^s g(B_u, \tilde{B}_u) du}  \Big\} ds\\
\leq&  \int_0^t  ds \int_0^{t-s} dr   \Big[ \Pi_{(x,y)}\Big( \phi(B_s)^p [P_r\phi(\tilde{B}_s)]^p\Big)\Big]^{1/p}  \Big[\Pi_{(x,y)}\Big(e^{q  \int_0^s g(B_u, \tilde{B}_u) du} \Big)\Big]^{1/q},
\end{align*}
where the last inequality uses H\"older's inequality. Now apply Lemma \ref{l1.1} to bound the above by
\begin{align} \label{eb5.1}
I \leq& 2^{1/q} \int_0^t  ds \int_0^{t-s}  dr   \Big[ \Pi_{(x,y)}\Big( \phi(B_s)^p [P_r\phi(\tilde{B}_s)]^p\Big)\Big]^{1/p}  \nn\\
= & 2^{1/q} \int_0^t  ds \int_0^{t-s}  dr \  \Big[\int p_s(x,z) \phi(z)^p dz\Big]^{1/p}  \Big[\int p_s(y,w) [P_r\phi(w)]^p dw\Big]^{1/p}.
\end{align}
Use that $\phi$ is supported on $\{|x|\leq K\}$ to get 
\begin{align*} 
 \Big[\int p_s(x,z) \phi(z)^p dz\Big]^{1/p}  \leq \|\phi\|_\infty \Big[\int_{|z|\leq K} p_s(x,z) dz\Big]^{1/p}.
\end{align*}
Next,  apply Jensen's inequality to bound $[P_r\phi(w)]^p$ by $\int p_r(w,u) \phi(u)^p du$ to see that
\begin{align} \label{ae7.51}
\Big[\int p_s(y,w) [P_r\phi(w)]^p dw\Big]^{1/p}& \leq \Big[\int p_{s+r}(y,u) \phi(u)^p du\Big]^{1/p} \nn\\
&\leq  \|\phi\|_\infty \Big[\int_{|z|\leq K} p_{s+r}(y,u) du\Big]^{1/p}.
\end{align}

Returning to \eqref{eb5.1}, we conclude that 
\begin{align} \label{eb5.2}
I \leq  & C \int_0^t  ds \int_0^{t-s}  dr \  \Big[\int_{|z|\leq K} p_s(x,z) dz\Big]^{1/p} \Big[\int_{|z|\leq K} p_{s+r}(y,u) du\Big]^{1/p}\nn\\
  \leq   & C\Big(\int_0^t  \Big[\int_{|z|\leq K} p_s(x,z) dz\Big]^{1/p} ds  \Big) \Big(\int_0^t  \Big[\int_{|w|\leq K} p_r(y,w) dw\Big]^{1/p} dr  \Big),
\end{align}
as required.
\end{proof}

Define
\begin{align} \label{ae9.11}
I_t(x):=1\wedge\Big( \int_0^t s^{\frac{d}{2}-\frac{d}{2p}}  p_{8s}(x)   ds\Big), \quad t\geq 0, x\in \R^d.
\end{align}

\begin{lemma}\label{la4.1}
For all $t\geq 0$ and $x\in \R^d$, we have
\[
\widetilde{Q}(t,x)  \leq C I_t(x).
\]
\end{lemma}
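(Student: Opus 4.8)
The plan is to control the inner quantity $h(s,x):=\int_{|z|\leq K}p_s(x,z)\,dz$ in two complementary ways and then integrate in $s$.

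First, a crude size bound. Since trivially $h(s,x)\leq 1$ and also $h(s,x)\leq (2\pi s)^{-d/2}|\{z:|z|\leq K\}| = c_dK^d s^{-d/2}$, we get $[h(s,x)]^{1/p}\leq 1\wedge(Cs^{-d/(2p)})$. Because $p\in(1,\tfrac{10}{9})$ forces $2p<\tfrac{20}{9}<3\leq d$, i.e. $d/(2p)>1$, the tail integral $\int_1^\infty s^{-d/(2p)}\,ds$ converges; combined with $\int_0^1 1\,ds=1$ this already yields the uniform bound $\widetilde{Q}(t,x)\leq C$ for all $t\geq 0$ and $x\in\R^d$, which accounts for the factor $1$ in $I_t(x)$.

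Second, I would extract the spatial decay by bounding the Gaussian by its supremum over the ball: $h(s,x)\leq c_dK^d(2\pi s)^{-d/2}\exp\!\big(-(|x|-K)_+^2/(2s)\big)$. Here $p<2$ is used crucially. When $|x|\geq 2K$ one has $(|x|-K)^2\geq |x|^2/4$, so $\exp(-(|x|-K)^2/(2s))\leq\exp(-|x|^2/(8s))\leq\exp(-p|x|^2/(16s))$, whence $h(s,x)\leq Cs^{-d/2}\exp(-p|x|^2/(16s))$ and, raising to the power $1/p$, $[h(s,x)]^{1/p}\leq Cs^{-d/(2p)}\exp(-|x|^2/(16s)) = C\,s^{d/2-d/(2p)}p_{8s}(x)$. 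For $|x|\leq 2K$ the same bound holds as soon as $s\geq 1$, because then $p|x|^2/(16s)\leq pK^2/4$ is bounded and can be absorbed into $C$. Thus the pointwise comparison $[h(s,x)]^{1/p}\leq C\,s^{d/2-d/(2p)}p_{8s}(x)$ holds for all $(s,x)$ with $s\geq 1$, and for all $s>0$ when $|x|\geq 2K$.

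Finally I would assemble the estimates by splitting $\widetilde{Q}(t,x)=\int_0^{t\wedge 1}[h(s,x)]^{1/p}\,ds+\int_{t\wedge 1}^{t}[h(s,x)]^{1/p}\,ds$. The second integral ranges over $s\geq 1$, so by the comparison above it is at most $C\int_0^t s^{d/2-d/(2p)}p_{8s}(x)\,ds$. For the first integral, if $|x|\geq 2K$ the comparison applies for all $s\in(0,t)$ and bounds it by the same quantity, whereas for $|x|<2K$ it is at most $t\wedge 1\leq 1$. Combining these with the uniform bound $\widetilde{Q}(t,x)\leq C$ and a short case distinction on whether $\int_0^t s^{d/2-d/(2p)}p_{8s}(x)\,ds$ exceeds $1$, one arrives at $\widetilde{Q}(t,x)\leq C\big(1\wedge \int_0^t s^{d/2-d/(2p)}p_{8s}(x)\,ds\big) = C\,I_t(x)$. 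The step I expect to be most delicate is precisely the small-$s$ regime with $|x|$ of order $K$: there the heat kernel concentrates, the trivial bound $h(s,x)\leq 1$ is essentially sharp, and the Gaussian-decay comparison degenerates, so the contribution cannot be compared directly to the second term — this is exactly what the truncation at $1$ in the definition of $I_t$ is meant to absorb, and it is also why the genuine $x$-decay is harvested only from the regime $s\gtrsim 1$ (or $|x|\gtrsim K$).
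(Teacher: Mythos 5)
Your strategy coincides with the paper's: establish the uniform bound $\widetilde Q(t,x)\le C$ from $d/(2p)>1$, and for $|x|\ge 2K$ use $|z-x|\ge |x|/2$ on the support $\{|z|\le K\}$ together with $p<2$ to obtain $[h(s,x)]^{1/p}\le C\,s^{d/2-d/(2p)}p_{8s}(x)$. The paper does exactly these two steps and closes with ``the conclusion follows by adjusting the constant''; your additional harvesting of Gaussian decay in the regime $|x|<2K$, $s\ge 1$ is a mild refinement that does not change the substance.

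You were right to single out the small-$t$, $|x|\sim K$ regime as the delicate point, but the ``short case distinction'' you invoke does not actually close it, and neither does the paper's argument. When $|x|<2K$ and $\int_0^ts^{d/2-d/(2p)}p_{8s}(x)\,ds<1$, your estimates only give $\widetilde Q(t,x)\le (t\wedge1)+C\int_0^t s^{d/2-d/(2p)}p_{8s}(x)\,ds$, which is not $\le C\int_0^t s^{d/2-d/(2p)}p_{8s}(x)\,ds$. In fact the stated inequality fails as $t\downarrow 0$ for fixed $0<|x|<K$: there $h(s,x)\to1$, so $\widetilde Q(t,x)\sim t$, whereas a substitution $u=|x|^2/(16s)$ shows $\int_0^ts^{d/2-d/(2p)}p_{8s}(x)\,ds\sim C(x)\,t^{2-d/(2p)}e^{-|x|^2/(16t)}$, so the ratio $\widetilde Q(t,x)/I_t(x)$ diverges. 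The paper's proof contains the identical gap, so your argument is faithful to it; and the gap is harmless downstream, since the lemma is only invoked through the $t$-uniform consequence $I_t(x)\le|x|^{2-d/p}\wedge1$ (via \eqref{ae8.25} and Corollary \ref{ac4.6}), and the weaker estimate $\widetilde Q(t,x)\le C(|x|^{2-d/p}\wedge1)$ does follow from your two bounds. A clean fix is to restrict the statement to $t\ge1$: then $\int_{1/2}^1 s^{d/2-d/(2p)}p_{8s}(x)\,ds\ge c_K>0$ uniformly over $|x|\le 2K$, so $I_t(x)\ge c_K\wedge1$ and $\widetilde Q(t,x)\le C$ finishes that case; alternatively, replace the target by the $t$-uniform bound $C(|x|^{2-d/p}\wedge1)$.
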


  \begin{proof}
 Recall from \eqref{ae9.41} that 
 \begin{align*}
 \widetilde{Q}(t,x)=\int_0^t  \Big[\int_{|y|\leq K} p_s(x,y) dy\Big]^{1/p} ds.
\end{align*}
If $|x|\leq 2K$, then we have
\begin{align*} 
\widetilde{Q}(t,x) & \leq \int_0^1 1 ds +\int_1^{t \vee 1}  \Big[\int_{|y|\leq K} \frac{1}{s^{d/2}} dy\Big]^{1/p} ds\\
&\leq 1+C \int_1^{t \vee 1}  {s^{-\frac{d}{2p}}} ds \leq C,
\end{align*}
where the last inequality follows by $\frac{d}{2p}>1$ (recall $p<\frac{5}{4}\leq \frac{d}{2}$ for $d\geq 3$). \\

Turning to $|x|>2K$, we use $|y-x|\geq |x|-K\geq |x|/2$ to get 
 \begin{align*} 
\widetilde{Q}(t,x) & \leq  \int_0^t  \Big[\int_{|y|\leq K} \frac{1}{s^{d/2}} e^{-\frac{|x|^2}{8s}} dy\Big]^{1/p} ds\\
&\leq C \int_0^t s^{-\frac{d}{2p}}  e^{-\frac{|x|^2}{8ps}}   ds\leq C   \int_0^t s^{\frac{d}{2}-\frac{d}{2p}}  p_{8s}(x)   ds.
\end{align*}
The conclusion follows by adjusting the constant $C>0$.
\end{proof}

The following is the final piece needed for the proof of Lemma \ref{al4.1}.

\begin{corollary}\label{ac4.6}
Let  $d\geq 3$ and $\alpha>2$. For any $t\geq 0$ and $x,y \in \R^d$, we have
\[
V^{\phi, \phi}_t(x,y) \leq  CI_t(x) I_t(y) \leq C(|x|^{2-\frac{d}{p}} \wedge 1) (|y|^{2-\frac{d}{p}} \wedge 1).
\]
\end{corollary}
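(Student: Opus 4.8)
The plan is to chain together the two preceding results, \Cref{al4.6} and \Cref{la4.1}, and then estimate the elementary integral defining $I_t(x)$. First I would combine \Cref{al4.6}, which gives $V^{\phi,\phi}_t(x,y)\leq C\widetilde Q(t,x)\widetilde Q(t,y)$, with \Cref{la4.1}, which gives $\widetilde Q(t,x)\leq CI_t(x)$; multiplying the two pointwise bounds immediately yields $V^{\phi,\phi}_t(x,y)\leq CI_t(x)I_t(y)$, absorbing the product of constants into a single $C$. This establishes the first inequality in the corollary with no real work.

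The substance is the second inequality, for which it suffices to show $I_t(x)\leq C(|x|^{2-d/p}\wedge 1)$ uniformly in $t\geq 0$. Since $I_t(x)\leq 1$ by definition, the bound is trivial for $|x|\leq 1$ (adjusting $C$), so I would assume $|x|>1$ and bound $\int_0^\infty s^{\frac d2-\frac d{2p}}p_{8s}(x)\,ds$, which dominates $\int_0^t$. Plugging in $p_{8s}(x)=(16\pi s)^{-d/2}e^{-|x|^2/(16s)}$, the integrand is a constant times $s^{-\frac d{2p}}e^{-|x|^2/(16s)}$; after the substitution $u=|x|^2/(16s)$ (so $s=|x|^2/(16u)$, $ds=-|x|^2/(16u^2)\,du$) the integral becomes $C|x|^{2-d/p}\int_0^\infty u^{\frac d{2p}-2}e^{-u}\,du$. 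The remaining $u$-integral is a Gamma function $\Gamma(\tfrac d{2p}-1)$, which is finite precisely because $\frac d{2p}>1$ — exactly the condition already invoked in the proof of \Cref{la4.1} (valid for $d\geq 5$ and $p<5/4$, or indeed for $d\geq 3$ with $p$ close to $1$). This gives $I_t(x)\leq C|x|^{2-d/p}$ for $|x|>1$, and combining the two regimes yields $I_t(x)\leq C(|x|^{2-d/p}\wedge 1)$; taking the product over $x$ and $y$ finishes the proof.

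The main (though modest) obstacle is purely bookkeeping: verifying that the change of variables is legitimate and that the exponent $\frac d{2p}-2>-1$ ensures integrability of $u^{\frac d{2p}-2}e^{-u}$ near $u=0$, together with keeping track of which regime ($|x|\leq 1$ versus $|x|>1$) one is in so as to match the stated $|x|^{2-d/p}\wedge 1$ form. One should also note that the constant $2-d/p$ is negative (again since $d/p>2$), so $|x|^{2-d/p}\to 0$ as $|x|\to\infty$, consistent with the bound being an honest decay estimate. No new tools beyond \Cref{al4.6}, \Cref{la4.1}, and the Gamma integral are needed.
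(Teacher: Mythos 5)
Your proof is correct and follows exactly the same route as the paper: the first inequality is just the composition of Lemma \ref{al4.6} with Lemma \ref{la4.1}, and the second is the substitution $u=|x|^2/(16s)$ turning the integral into $C|x|^{2-d/p}\,\Gamma\bigl(\tfrac{d}{2p}-1\bigr)$, finite because $\tfrac{d}{2p}>1$, together with the trivial bound $I_t(x)\leq 1$ for $|x|\leq 1$. This matches the paper's display \eqref{ae8.25} essentially line for line, so there is nothing to add.
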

\begin{proof}
The first inequality is immediate from Lemma \ref{al4.6} and Lemma \ref{la4.1}. To check the second inequality, we notice that if $|x|>1$, then for any $t\geq 0$,
\begin{align} \label{ae8.25}
 &  \int_0^t s^{\frac{d}{2}-\frac{d}{2p}}  p_{8s}(x)   ds \leq C\int_0^\infty s^{-\frac{d}{2p}}  e^{-\frac{|x|^2}{16s}}   ds\nn\\
&= C|x|^{2-\frac{d}{p}} \int_0^\infty r^{\frac{d}{2p}-2}  e^{-r}   dr \leq C |x|^{2-\frac{d}{p}},
\end{align}
where the last inequality follows by $\frac{d}{2p}-2>-1$. So the proof is complete
\end{proof}

We can prove Lemma \ref{al4.1} using the above result.

\begin{proof}[Proof of Lemma \ref{al4.1}]
Let $d\geq 5$ and $\alpha>2$.  Apply Corollary \ref{ac4.6} to get
\begin{align} \label{ae8.59}
\int V^{\phi, \phi}_t(x,x) dx \leq      C +C\int_{|x|>1} |x|^{4-\frac{2d}{p}} dx\leq  C+ C\int_{1}^\infty r^{4-\frac{2d}{p}} r^{d-1} dr <\infty,
\end{align}
where the last inequality is due to 
\begin{align*}
p<\frac{10}{9}\leq \frac{2d}{4+d} \quad \Rightarrow  \quad 4-\frac{2d}{p}+d-1<-1.
\end{align*}
 Hence, \eqref{ae8.1}  follows. The proof is complete, as noted above.
\end{proof}

  \section{Estimates of the second moments}\label{as5}

We will prove Lemma \ref{al2.1} in this section, thus finishing the proof of Theorem \ref{t1}. Fix $d\geq 3$ and $\alpha>2$.  
By using
\begin{align*}
 \Big(Y_T(\phi)-T \la \lambda, \phi \ra\Big)^2\leq 2 \Big(Y_T(\phi)-\la \lambda, V_1^\phi(T) \ra\Big)^2 +2\Big(\la \lambda, V_1^\phi(T) \ra-T \la \lambda, \phi \ra\Big)^2,
\end{align*}
we may reduce the proof of Lemma \ref{al2.1} to proving that there is some $\delta\in (0,1)$ such that
\begin{align}\label{ae2.11}
\E \Big[\Big(Y_T(\phi)-\la \lambda, V_1^\phi(T) \ra\Big)^2\Big]\leq CT^{2-\delta}, \quad \forall T>0,
\end{align}
and 
\begin{align}\label{ae2.10}
\E \Big[\Big(\la \lambda, V_1^\phi(T) \ra-T \la \lambda, \phi \ra\Big)^2\Big]\leq CT^{2-\delta}, \quad \forall T>0,
\end{align}

 We will prove the above in the following two steps.\\

{\it Step 1.}  
By using \eqref{ea6.2} and \eqref{e10.11}, one can check that 
\begin{align} 
 \E^W \Big[  \Big(Y_T(\phi)-\la \lambda, V_1^\phi(T) \ra\Big)^2\Big]=  \la \lambda, V_2^\phi(T)\ra,
\end{align}
and hence  Lemma \ref{l2.2} implies that
\begin{align}  \label{ea4.3}
 &\E  \Big[  \Big(Y_T(\phi)-\la \lambda, V_1^\phi(T) \ra\Big)^2\Big]=  \E \Big[ \la \lambda, V_2^\phi(T)\ra \Big]\nn\\
 &= \int dx \int_0^T dt  \int p_{T-t}(x,z) V^{\phi, \phi}_t(z,z) dz = \int_0^T dt \int    V^{\phi, \phi}_t (z, z) dz,
\end{align}
where in the last equality, we have used Fubini's theorem and that $\int p_{T-t}(x,z) dx=1$.\\

When $d\geq 5$, we get from Lemma \ref{al4.1} that \eqref{ea4.3} becomes
\begin{align} \label{ae2.12}
 \E  \Big[  \Big(Y_T(\phi)-\la \mu, V_1^\phi(T) \ra\Big)^2\Big]   \leq CT,
\end{align}
as required.\\

When $d=3$ or $4$, we apply Lemma \ref{al4.6} and Lemma \ref{la4.1} to see that
\begin{align} \label{ea4.2}
\int V^{\phi, \phi}_t(x,x) dx \leq&     C +C\int_{|x|>1} \Big[\int_0^t s^{\frac{d}{2}-\frac{d}{2p}}  p_{8s}(x)   ds\Big]^2 dx.
\end{align}
The integral on the right-hand side of \eqref{ea4.2} is equal to
\begin{align*} 
&\int_0^t s^{\frac{d}{2}-\frac{d}{2p}} ds \int_0^t  r^{\frac{d}{2}-\frac{d}{2p}} dr  \int_{|x|>1}  p_{8s}(x)  p_{8r}(x)    dx\\
&\leq C \int_0^t s^{\frac{d}{2}-\frac{d}{2p}} ds \int_0^t  r^{\frac{d}{2}-\frac{d}{2p}} dr  \cdot \frac{1}{(s+r)^{d/2}}.
\end{align*}
By letting $u=s+r$ and $s=s$, the right-hand side above is equal to
\begin{align} \label{ae2.1}
&C \int_0^{2t}  du \int_0^u    s^{\frac{d}{2}-\frac{d}{2p}}   (u-s)^{\frac{d}{2}-\frac{d}{2p}}  \cdot \frac{1}{u^{d/2}}  ds\nn\\
&\leq C\int_0^{2t}   u^{-\frac{d}{2p}} du \int_0^u   s^{\frac{d}{2}-\frac{d}{2p}}    ds \leq C\int_0^{2t}   u^{1+\frac{d}{2}-\frac{d}{p}} du.
\end{align}
Since $p>1\geq \frac{2d}{4+d}$ for $d=3$ or $4$, we get 
\begin{align*}
1+\frac{d}{2}-\frac{d}{p}>-1,
\end{align*}
and so \eqref{ae2.1} is at most $Ct^{2+\frac{d}{2}-\frac{d}{p}}$. It follows that \eqref{ea4.2} becomes
\begin{align*}
\int V^{\phi, \phi}_t(x,x) dx \leq&     C +Ct^{2+\frac{d}{2}-\frac{d}{p}}\leq C  (t \vee 1)^{2+\frac{d}{2}-\frac{d}{p}}.
\end{align*}


Recalling \eqref{ea4.3}, we obtain that for any $T>1$,
\begin{align} \label{ae2.13}
 \E_\lambda \Big[  \Big(Y_T(\phi)-\la \lambda, V_1^\phi(T) \ra\Big)^2\Big] \leq \int_0^T C_K (t \vee 1)^{1+\frac{d}{2}-\frac{d}{p}} dt  \leq CT^{3+\frac{d}{2}-\frac{d}{p}}\leq CT^{3/2},
\end{align}
where the last inequality follows by (recall $p\in (1, \frac{10}{9})$ and $d=3$ or $4$)
\[
\frac{d}{2}-\frac{d}{p} \in (-2, -\frac{3}{2}).
\]
 
The proof of \eqref{ae2.11} is now complete in view of \eqref{ae2.12} and \eqref{ae2.13}.\\

{\it Step 2.} 
Recall $V_1^\phi(t,x)$ from \eqref{ae1.33} to see that
\begin{align}  \label{ea5.1}
\la \lambda, V_1^\phi(T) \ra=T \la \lambda, \phi \ra+\int_0^T \int V_1^\phi(t,y) W(dt, y)dy.
\end{align}
We used Fubini's theorem and stochastic Fubini's theorem for equality above.
It follows that
\begin{align}  \label{ae5.1}
  &\E \Big[\Big(\la \lambda, V_1^\phi(T) \ra-T \la \lambda, \phi \ra\Big)^2\Big]  \leq \E\Big[\int_0^T dt \int  \int V_1^\phi(t,x)V_1^\phi(t,y) g(x,y) dx dy \Big]\nn\\
  &=\int_0^T dt \int  \int V^{\phi, \phi}_t(x,y) g(x,y) dx dy \leq C \int_0^T dt \int  \int I_t(x)I_t(y) g(x,y) dx dy, 
\end{align}
where the equality follows by \eqref{ea10.11} and the last inequality uses Corollary \ref{ac4.6}.\\

 It remains to bound the last integral.

 \begin{lemma}\label{al5.1}
 Let $d\geq 3$ and $\alpha>2$. \\

\no (a) For any $2<\gamma< d\wedge \alpha \wedge 4$  and any $p \in (1, \frac{d}{d-(\gamma-2)})$, there is some constant $C>0$  depending on $d$, $\alpha$, $\gamma$, $p$ and $\phi$ such that
  \begin{align*}   
  \int_{\R^d}  \int_{\R^d}  I_t(x) I_t(y) g(x,y) dx dy \leq Ct^{2+d-\frac{d}{p}-\frac{\gamma}{2}}, \quad \forall t\geq 1.
 \end{align*}  
 \no  (b) If $d\wedge \alpha>4$, then 
 \begin{align*}   
  \int_{\R^d}  \int_{\R^d}  I_t(x) I_t(y) g(x,y) dx dy \leq C , \quad \forall t\geq 1.
 \end{align*}  
 
 \end{lemma}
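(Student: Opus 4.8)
\textbf{Proof proposal for Lemma \ref{al5.1}.}

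The plan is to bound the double integral by splitting the spatial domain according to where the singularity of $g$ is relevant and where the decay of $I_t$ takes over. Recall from \eqref{ae9.11} that $I_t(x) \le 1$ always, and from Corollary \ref{ac4.6} that $I_t(x) \le C(|x|^{2-d/p}\wedge 1)$; moreover, from Lemma \ref{la4.1} and \eqref{ae8.25}, for $|x|>1$ we have the sharper time-dependent bound $I_t(x) \le C\int_0^t s^{d/2-d/(2p)}p_{8s}(x)\,ds$. I would first substitute the hypothesis $g(x,y)\le \eps(|x-y|^{-\alpha}\wedge 1)$, so that the integral is controlled by $\int\int I_t(x)I_t(y)(|x-y|^{-\alpha}\wedge 1)\,dx\,dy$; since $\gamma < \alpha$ and $\gamma<4$, we also have $(|x-y|^{-\alpha}\wedge 1) \le |x-y|^{-\gamma}$ (using $|x-y|^{-\alpha}\le|x-y|^{-\gamma}$ when $|x-y|\ge 1$ and $1\le |x-y|^{-\gamma}$ when $|x-y|\le 1$). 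This reduces everything to estimating $\int\int I_t(x)I_t(y)|x-y|^{-\gamma}\,dx\,dy$, at which point the singularity is locally integrable in $\R^d$ because $\gamma<d$.

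For part (a), I would insert the integral representation $I_t(x)\le C\int_0^t s^{d/2-d/(2p)}p_{8s}(x)\,ds$ (valid for $|x|>1$; the contribution of the bounded region $|x|\le 1$, or more precisely of $I_t$ capped at $1$, is absorbed into lower-order terms and ultimately dominated since the target exponent $2+d-d/p-\gamma/2$ exceeds those contributions for $t\ge 1$). Then, after using the semigroup-type bound $\int_{\R^d} p_{8s}(x-w)p_{8r}(w-y)\,dw = p_{8(s+r)}(x-y)$ — or rather, convolving against $|x-y|^{-\gamma}$ — the key computation becomes $\int_{\R^d} p_{8s}(x)|x|^{-\gamma}\,dx \le C s^{-\gamma/2}$, which holds precisely because $\gamma<d$. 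This collapses the double space integral into $C\int_0^t\int_0^t s^{d/2-d/(2p)}r^{d/2-d/(2p)}(s+r)^{-\gamma/2}\,ds\,dr$, and the change of variables $u=s+r$, $s=s$ (exactly as in \eqref{ae2.1}) turns it into $C\int_0^{2t}u^{d/2-d/p-\gamma/2}\int_0^u s^{d/2-d/(2p)}\,ds\,du$. The inner integral is finite and of order $u^{1+d/2-d/(2p)}$ since $d/(2p)<d/2$ ensures $d/2-d/(2p)>-1$; the outer integral then has integrand of order $u^{1+d-d/p-\gamma/2}$, and the condition $p<\frac{d}{d-(\gamma-2)}$ is exactly what guarantees $1+d-d/p-\gamma/2 > -1$, so the outer integral is of order $t^{2+d-d/p-\gamma/2}$, giving the claim.

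For part (b), when $d\wedge\alpha>4$ one can afford to drop the time-dependence entirely: use $I_t(x)\le C(|x|^{2-d/p}\wedge 1)$ uniformly in $t$ together with $g(x,y)\le\eps(|x-y|^{-\alpha}\wedge 1)$, and show $\int\int (|x|^{2-d/p}\wedge 1)(|y|^{2-d/p}\wedge 1)(|x-y|^{-\alpha}\wedge 1)\,dx\,dy<\infty$. Splitting into $|x-y|\le 1$ and $|x-y|>1$: on the former, $|x-y|^{-\alpha}$ is locally integrable (here we need only $\alpha$ slightly controlled, but $\alpha>4>d$-independent local integrability is fine when paired with one decaying factor); on the latter, $|x-y|^{-\alpha}\le 1$ and the finiteness follows from $\int_{|x|>1}|x|^{4-2d/p}\,dx<\infty$, which is the same computation as in \eqref{ae8.59} and uses $p<\frac{10}{9}\le\frac{2d}{4+d}$, i.e. $4-2d/p+d-1<-1$. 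One should be a little careful pairing the decay factors with the singular kernel so that each region gets at least enough decay; the cleanest route is Hölder or a direct two-region split with the substitution $z=x-y$.

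The main obstacle I anticipate is bookkeeping the exponents in part (a): one must verify simultaneously that $d/2-d/(2p)>-1$ (so the inner $s$-integral converges at $0$), that $\gamma<d$ (so $\int p_{8s}(x)|x|^{-\gamma}dx\lesssim s^{-\gamma/2}$ with no boundary issue), and that the precise range $p<\frac{d}{d-(\gamma-2)}$ yields $1+d-d/p-\gamma/2>-1$ for the final $u$-integral. Reconciling these with the freedom in choosing $\gamma\in(2, d\wedge\alpha\wedge 4)$ — and checking that such a pair $(\gamma,p)$ actually exists, which it does since as $\gamma\downarrow 2$ the constraint on $p$ relaxes toward $p<1$... wait, one must instead take $\gamma$ close to its upper range to open up room for $p>1$ — is the delicate part, but it is purely arithmetic once the integral estimates above are in place.
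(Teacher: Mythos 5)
Your route for part (a) is genuinely different from the paper's and works, but your part (b) has a real gap, and your explanation of where the hypothesis $p<\frac{d}{d-(\gamma-2)}$ is used in (a) is incorrect.

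On part (a): the paper splits the spatial domain into four pieces (depending on whether $|x|,|y|$ are small or large) and then, in the dominant region, splits the time integrals at $s,r=1$ so that Lemma \ref{al5.4} applies. You instead replace $(|x-y|^{-\alpha}\wedge 1)$ by $|x-y|^{-\gamma}$ and use the scaling identity $\int p_{8(s+r)}(z)|z|^{-\gamma}\,dz = C(s+r)^{-\gamma/2}$ (valid for all $s+r>0$ because $\gamma<d$), which eliminates the time split; this is a cleaner calculation. However, your claim that the condition $p<\frac{d}{d-(\gamma-2)}$ ``is exactly what guarantees $1+d-\frac{d}{p}-\frac{\gamma}{2}>-1$'' is false: that inequality is equivalent to $d(1-\frac1p)>\frac{\gamma}{2}-2$, and since $p>1$ makes the left side positive while $\gamma<4$ makes the right side negative, it holds automatically. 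Where the constraint on $p$ is actually needed in your approach is in the region you hand-wave away: for $|x|\le 1$ and $|y|>2$ one uses $I_t(y)\le C|y|^{2-d/p}$ and $|x-y|^{-\gamma}\le C|y|^{-\gamma}$, and the integral $\int_{|y|>2}|y|^{2-d/p-\gamma}\,dy$ converges precisely when $2-\frac{d}{p}-\gamma+d-1<-1$, i.e.\ $p<\frac{d}{d-(\gamma-2)}$. So the constraint is real, it just enters elsewhere than you claim, and the ``absorbed into lower-order terms'' remark needs to be filled in with exactly this computation.

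On part (b), the argument as stated fails. You propose to split at $|x-y|=1$ and, on $|x-y|>1$, to bound $(|x-y|^{-\alpha}\wedge 1)\le 1$ and conclude finiteness from $\int_{|x|>1}|x|^{4-2d/p}\,dx<\infty$. But $\int\!\int_{|x-y|>1}\bigl(|x|^{2-d/p}\wedge 1\bigr)\bigl(|y|^{2-d/p}\wedge 1\bigr)\,dx\,dy$ is not finite: take $|x|\le 1$ and $|y|>2$, giving at least $\mathrm{vol}(B(0,1))\int_{|y|>2}|y|^{2-d/p}\,dy$, which diverges since $2-\frac{d}{p}+d-1\ge-1$ for every $p>1$. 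The one-variable integral $\int_{|x|>1}|x|^{4-2d/p}\,dx$ only controls the diagonal $x=y$; it does not control the off-diagonal mass. You must keep the decay of $g$ for $|x-y|$ large — this is where $\alpha>4$ actually does work. One fix in the spirit of your part-(a) argument: keep the capped kernel $(|x-y|^{-\alpha}\wedge 1)$ when passing to the Gaussian representation (so the $u$-integral has no singularity at $u=0$), bound $\int p_u(z)(|z|^{-\alpha}\wedge 1)\,dz\le 1\wedge(Cu^{-\gamma'/2})$ with $\gamma'\in(4,d\wedge\alpha)$, and then split the $u$-integral at $u=1$ exactly as the paper does for $J_1,J_2$. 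This is essentially the paper's route.
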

 
Assuming the above, we first finish the proof of Lemma \ref{al2.1}.

\begin{proof}[Proof of Lemma \ref{al2.1}]

 When $d\geq 3$ and $\alpha>2$, we let $\gamma=2+\delta$ for some $\delta\in (0, \frac{\alpha-2}{2}\wedge \frac{1}{2})$ such that $\gamma<d\wedge \alpha \wedge 4$. Pick $p \in (1, \frac{10}{9} \wedge \frac{d}{d-\frac{\delta}{2}})$ so that
 \[
d-\frac{d}{p}-\frac{\delta}{2}<0.
\]
Apply Lemma \ref{al5.1} (a) with $\gamma=2+\delta$ to see that \eqref{ae5.1} with $T>1$ becomes
 \begin{align*}  
 &  \E \Big[\Big(\la \lambda, V_1^\phi(T) \ra-T \la \lambda, \phi \ra\Big)^2\Big]  \leq   C+\int_1^T C t^{2+d-\frac{d}{p}-\frac{2+\delta}{2}} dt \leq CT^{2+d-\frac{d}{p}-\frac{\delta}{2}}.
\end{align*}
Therefore \eqref{ae2.10} follows and the proof of Lemma \ref{al2.1} is complete given \eqref{ae2.11} and \eqref{ae2.10}.
\end{proof}

It remains to prove Lemma \ref{al5.1}.\\

To bound the integral in \eqref{ae5.1}, we note that
\begin{align*} 
\iint    I_t(x) I_t(y) g(x,y) dx dy&\leq  \int_{|x|\leq 4}  \int_{|y|\leq 4} I_t(x) I_t(y) g(x,y) dx dy\\
&+ \int_{|x|\leq 2}  \int_{|y|> 4} I_t(x) I_t(y) g(x,y) dx dy\\
&+ \int_{|x|> 4}  \int_{|y|\leq 2} I_t(x) I_t(y)  g(x,y) dx dy\\
&+ \int_{|x|> 2}  \int_{|y|> 2} I_t(x) I_t(y)  g(x,y) dx dy=: I_1+I_2+I_3+I_4.
\end{align*}
We will use Lemma \ref{al4.6} and Lemma \ref{la4.1} to bound the above integrals.\\

First, to bound $I_1$, we simply use  $I_t(x) I_t(y)$ by $1$ and  $g(x,y)\leq C$ to  see that 
\begin{align}  \label{ea4.11}
 &I_1\leq C.
\end{align}

Turning to $I_2$, we apply Corollary \ref{ac4.6}  to see that
\begin{align*} 
I_t(x) I_t(y) \leq     C |y|^{2-\frac{d}{p}}.
\end{align*}
 Noticing that when $|x|\leq 2$ and $|y|> 4$, we have $|x-y|\geq |y|-|x|\geq |y|/2$, thus giving $g(x,y)\leq C|y|^{-\alpha}$. We conclude that
\begin{align}  \label{ea5.77}
 I_2&\leq    C      \int_{|y|> 4} |y|^{-\alpha}|y|^{2-\frac{d}{p}} dy=C\int_{4}^\infty r^{-\alpha+2-\frac{d}{p}+d-1} dr.
\end{align}
Since $\alpha>2$, we may take $p>1$ such that  $p(d-\alpha+2)<d$. It follows that 
\begin{align*} 
-\alpha+2-\frac{d}{p}+d-1<-1,
\end{align*}
and hence \eqref{ea5.77} becomes
\begin{align}  \label{ea4.12}
 I_2&\leq    C.
\end{align}
 
By symmetry, we get that
\begin{align}  \label{ea4.13}
 I_3=I_2  \leq C.
 \end{align}
 
For the last integral $I_4$,  we have
 \begin{align*}  
 I_4 &\leq    C  \int_0^t s^{\frac{d}{2}-\frac{d}{2p}}   ds \int_0^t r^{\frac{d}{2}-\frac{d}{2p}} dr  \int_{\R^d}   \int_{\R^d}    p_{8s}(x)       p_{8r}(y) (|x-y|^{-\alpha} \wedge 1) dx dy. 
\end{align*}
By letting $z=x-y, x=x$ and applying Chapman–Kolmogorov's equation, we get
 \begin{align}  \label{ea4.6}
 I_4 &\leq  C     \int_0^t s^{\frac{d}{2}-\frac{d}{2p}}   ds \int_0^t r^{\frac{d}{2}-\frac{d}{2p}} dr  \int_{\R^d}     p_{8s+8r}(z)   (|z|^{-\alpha} \wedge 1) dz:=C   \cdot J. 
\end{align}
It suffices to bound $J$.\\

If $t\leq 1$, we get
\begin{align}  \label{ae7.61}
 & J \leq \int_0^1 s^{\frac{d}{2}-\frac{d}{2p}}   ds \int_0^1 r^{\frac{d}{2}-\frac{d}{2p}} dr\leq 1.
\end{align}
If $t>1$, we use \eqref{ae7.61} to see that
\begin{align}  \label{ea4.7}
J \leq 1 & +2 \int_0^1 s^{\frac{d}{2}-\frac{d}{2p}}   ds \int_1^t r^{\frac{d}{2}-\frac{d}{2p}} dr \int_{\R^d}     p_{8s+8r}(z)   (|z|^{-\alpha} \wedge 1) dz\nn\\
 &+ \int_1^t s^{\frac{d}{2}-\frac{d}{2p}}   ds \int_1^t r^{\frac{d}{2}-\frac{d}{2p}} dr \int_{\R^d}     p_{8s+8r}(z)   (|z|^{-\alpha} \wedge 1) dz:=1+2J_1+J_2.
\end{align}

\begin{lemma}\label{al5.4}
For any $t\geq 1$ and $\alpha>0$, we have 
\begin{align*}  
 \int_{\R^d} p_t(x)(|x|^{-\alpha} \wedge 1) dx \leq 
 \begin{cases}
 Ct^{-\alpha/2}, &\text{ if } \alpha<d;\\
  Ct^{-d/2} \log (2t), &\text{ if } \alpha=d;\\
   Ct^{-d/2}, &\text{ if } \alpha>d;\\
 \end{cases}
\end{align*}
\end{lemma}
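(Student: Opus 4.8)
The statement simplifies once we notice that $|x|^{-\alpha}\wedge 1$ equals $1$ on the unit ball and equals $|x|^{-\alpha}$ outside it, so that
\begin{align*}
\int_{\R^d} p_t(x)(|x|^{-\alpha}\wedge 1)\,dx = \int_{|x|\le 1} p_t(x)\,dx + \int_{|x|>1} p_t(x)|x|^{-\alpha}\,dx .
\end{align*}
The plan is to bound the two pieces separately. For the first I would simply use $p_t(x)\le(2\pi t)^{-d/2}$ to get $\int_{|x|\le 1}p_t(x)\,dx\le Ct^{-d/2}$, which is already dominated by the claimed right-hand side in every case (for $t\ge1$ this is $\le Ct^{-\alpha/2}$ when $\alpha<d$, equals $Ct^{-d/2}\le Ct^{-d/2}\log(2t)$ when $\alpha=d$, and equals $Ct^{-d/2}$ when $\alpha>d$).

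For the second piece I would pass to polar coordinates and then substitute $r=\sqrt t\,u$:
\begin{align*}
\int_{|x|>1} p_t(x)|x|^{-\alpha}\,dx = C t^{-d/2}\int_1^\infty e^{-r^2/(2t)} r^{d-1-\alpha}\,dr = C t^{-\alpha/2}\int_{1/\sqrt t}^\infty e^{-u^2/2} u^{d-1-\alpha}\,du .
\end{align*}
Thus everything reduces to estimating the auxiliary integral $A_t:=\int_{1/\sqrt t}^\infty e^{-u^2/2}u^{d-1-\alpha}\,du$ uniformly for $t\ge1$. The tail $\int_1^\infty$ is harmless because of the Gaussian factor (on $u\ge1$ one has $u^{d-1-\alpha}\le 1$, so that part is $\le\int_1^\infty e^{-u^2/2}\,du=C$), so all the action is in $\int_{1/\sqrt t}^1 u^{d-1-\alpha}\,du$, and this is exactly where the trichotomy comes from: if $\alpha<d$ then $d-1-\alpha>-1$ is integrable at $0$ and $A_t\le\int_0^\infty e^{-u^2/2}u^{d-1-\alpha}\,du=C$; if $\alpha=d$ the integrand is $u^{-1}e^{-u^2/2}$ and $\int_{1/\sqrt t}^1 u^{-1}\,du=\tfrac12\log t$, giving $A_t\le C(1+\log t)\le C\log(2t)$; if $\alpha>d$ then $\int_{1/\sqrt t}^1 u^{d-1-\alpha}\,du\le C t^{(\alpha-d)/2}$, giving $A_t\le C t^{(\alpha-d)/2}$.

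Feeding these back into the displayed identity gives the three stated bounds: $Ct^{-\alpha/2}$ when $\alpha<d$, $Ct^{-d/2}\log(2t)$ when $\alpha=d$, and $Ct^{-\alpha/2}\cdot t^{(\alpha-d)/2}=Ct^{-d/2}$ when $\alpha>d$; in each case the first piece $Ct^{-d/2}$ is absorbed. There is no real obstacle here — the estimate is elementary — so the only thing to be careful about is the bookkeeping of the lower-endpoint contribution at $u=1/\sqrt t$ in $A_t$, which is precisely what produces the logarithm at the critical exponent $\alpha=d$ and the compensating factor $t^{(\alpha-d)/2}$ when $\alpha>d$. When writing it out I would also record the elementary inequality $1+\log t\le C\log(2t)$ for $t\ge1$ used in the case $\alpha=d$.
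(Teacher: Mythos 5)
Your proof is correct and follows essentially the same route as the paper: pass to polar coordinates, split the radial integral at $r=1$ (you split $|x|\lessgtr 1$ first and then go polar, the paper goes polar first and then splits, which is the same thing), bound the inner piece crudely by $Ct^{-d/2}$, and rescale the outer piece to isolate the $t$-dependence. The only cosmetic difference is the substitution — you use $u=r/\sqrt t$ where the paper uses $x=r^2/t$ — and both give the identical trichotomy at the lower endpoint.
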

\begin{proof}
Set
\begin{align*}  
I_{\alpha,d}(t):= \int_{\R^d} p_t(x)(|x|^{-\alpha} \wedge 1) dx.
\end{align*}
We have
\begin{align*}  
I_{\alpha,d}(t)& \leq C   \int_0^\infty (r^{-\alpha} \wedge 1) \frac{1}{ t^{d/2}} e^{-\frac{r^2}{2t}} r^{d-1} dr \\
&\leq C t^{-d/2}+C t^{-d/2} \int_1^\infty r^{d-1-\alpha}   e^{-\frac{r^2}{2t}}   dr.
\end{align*}
Denote the integral on the right-hand side above by
\begin{align}  \label{ae6.1}
J_{\alpha,d}(t):= \int_1^\infty r^{d-1-\alpha}   e^{-\frac{r^2}{2t}}   dr.
\end{align} 
 By using a change of variable, we get
\begin{align*}  
 J_{\alpha,d}(t)&=\frac{1}{2}  \int_{t^{-1}}^\infty (tx)^{\frac{d-1-\alpha}{2}}   e^{-\frac{x}{2}}  t^{\frac{1}{2}} x^{-\frac{1}{2}} dx\\
 &   =2^{-1} t^{\frac{d-\alpha}{2}}\int_{t^{-1}}^\infty x^{\frac{d-2-\alpha}{2}}   e^{-\frac{x}{2}}   dx.
 \end{align*}
If $\alpha<d$, then $J_{\alpha,d}(t)$ is bounded by $C   t^{\frac{d-\alpha}{2}}$ and hence
\begin{align*}  
I_{\alpha,d}(t)\leq C t^{-d/2}+C  t^{-d/2} \cdot C   t^{\frac{d-\alpha}{2}} \leq   C  t^{-\alpha/2}.
\end{align*}
If $\alpha=d$, then
\begin{align*}
	J_{\alpha,d}(t)=&\frac{1}{2}\int_{t^{-1}}^\infty x^{-1}   e^{-\frac{x}{2}}   dx\\
	\leq & \frac{1}{2} \int_{t^{-1}}^1 x^{-1}     dx +\frac{1}{2}\int_1^\infty   e^{-\frac{x}{2}}   dx  \leq  \frac{\log t}{2}+1.
\end{align*}
It follows that
\begin{align*}  
I_{\alpha,d}(t)\leq  C  t^{-d/2}+C  t^{-d/2} \cdot \Big(\frac{\log t}{2}+1\Big) \leq  C t^{-d/2}\log (2t).
\end{align*}
If $\alpha>d$, then we recall \eqref{ae6.1} to see that
  \begin{align*}  
 J_{\alpha,d}(t)=\int_1^\infty r^{d-1-\alpha}     dr \leq C,
\end{align*}
and hence
\begin{align*}  
I_{\alpha,d}(t)\leq  C t^{-d/2}+C  t^{-d/2} \cdot C  \leq   C t^{-d/2}.
\end{align*}
The proof is complete by adjusting the constant.
\end{proof}

We are ready to finish the proof of Lemma \ref{al5.1}.

 \begin{proof}[Proof of Lemma \ref{al5.1}]
For any $\alpha>2$ and $d\geq 3$, we let  $2<\gamma< d\wedge \alpha \wedge 4$.  For all $t\geq 1$, one can check by Lemma \ref{al5.4} that in all three cases, we have
\begin{align}   \label{ea4.21}
 \int_{\R^d} p_t(x)(|x|^{-\alpha} \wedge 1) dx \leq C  t^{-\frac{\gamma}{2}}.
\end{align}

Apply the above to get that (recall $J_1$ from \eqref{ea4.7})
\begin{align}  \label{ea4.8}
 J_1 &\leq  C \int_0^1 s^{\frac{d}{2}-\frac{d}{2p}}   ds \int_1^t r^{\frac{d}{2}-\frac{d}{2p}} (s+r)^{-\frac{\gamma}{2}} dr \nn\\
 &\leq C    \int_1^t r^{\frac{d}{2}-\frac{d}{2p}-\frac{\gamma}{2}}   dr \leq C,
\end{align}
where the second inequality follows by $s^{\frac{d}{2}-\frac{d}{2p}}  (s+r)^{-\frac{\gamma}{2}} \leq (2r)^{-\frac{\gamma}{2}}$ for $0\leq s\leq 1 \leq r$, and the last inequality follows by letting $p \in (1, \frac{d}{d-(\gamma-2)})$ close to $1$ such that
\[
\frac{d}{2}-\frac{d}{2p}-\frac{\gamma}{2}<-1.
\]

Turning to $J_2$, we use \eqref{ea4.21} to get
\begin{align*}  
J_2 &\leq  C  \int_1^t s^{\frac{d}{2}-\frac{d}{2p}}   ds \int_1^t r^{\frac{d}{2}-\frac{d}{2p}} (s+r)^{-\frac{\gamma}{2}} dr \nn\\
 &=C  \int_2^{2t} u^{-\frac{\gamma}{2}} du   \int_1^u s^{\frac{d}{2}-\frac{d}{2p}}   (u-s)^{\frac{d}{2}-\frac{d}{2p}}  ds \nn\\
 &\leq C \int_2^{2t} u^{-\frac{\gamma}{2}}     u^{1+d-\frac{d}{p}}  du,
 \end{align*}
 where the equality follows by letting $u=s+r$ and $s=s$. Again we pick $p \in (1, \frac{2d}{2d-(\gamma-2)})$ close to $1$ such that
\[
1+d-\frac{d}{p}-\frac{\gamma}{2}<0.
\]
Moreover, since $\gamma<4$, we get
\[
1+d-\frac{d}{p}-\frac{\gamma}{2}>-1,
\]
and hence
 \begin{align}  \label{ea4.9}
J_2 &\leq   C  t^{2+d-\frac{d}{p}-\frac{\gamma}{2}}.
 \end{align}
 
  Combine \eqref{ea4.7},  \eqref{ea4.8} and  \eqref{ea4.9} to see that if $t>1$, then 
\begin{align}  \label{ea4.10}
J\leq      C+C t^{2+d-\frac{d}{p}-\frac{\gamma}{2}}\leq Ct^{2+d-\frac{d}{p}-\frac{\gamma}{2}},
 \end{align}
 where the last inequality follows by $\gamma<4$.

 Together with \eqref{ae7.61} for $t\leq 1$, we conclude that (recall \eqref{ea4.6})
  \begin{align}   
 I_4 \leq C \cdot J\leq C (t\vee 1)^{2+d-\frac{d}{p}-\frac{\gamma}{2}}.
\end{align} 
 
  Finally, we collect the four pieces \eqref{ea4.11}, \eqref{ea4.12}, \eqref{ea4.13} and the above to see that
  \begin{align*}   
  \int_{\R^d}  \int_{\R^d}  I_t(x) I_t(y) g(x,y) dx dy \leq C(t\vee 1)^{2+d-\frac{d}{p}-\frac{\gamma}{2}}.
 \end{align*} 
 The proof of (a) is now complete.\\

 The proof of (b) follows similarly. Pick $\gamma' \in (4, d\wedge \alpha)$ such that for all $t\geq 1$, one can check by Lemma \ref{al5.4} that in all three cases, we have
\begin{align}   \label{ea4.216}
 \int_{\R^d} p_t(x)(|x|^{-\alpha} \wedge 1) dx \leq C  t^{-\frac{\gamma'}{2}}.
\end{align}
Repeat the above calculations to bound $J_1$ by $C$ and $J_2$ by $C$ and so $I_4\leq C$. It follows that
\begin{align*}   
  \int_{\R^d}  \int_{\R^d}  I_t(x) I_t(y) g(x,y) dx dy \leq C,
 \end{align*} 
 as required.
 \end{proof}

\section{Convergence of the conditional Laplace transform}\label{as6}

We will prove Lemma \ref{al4.2} in this section, that is,
\begin{align}\label{ea6.21}
  \lim_{T\to \infty} \E[|I_2(T,\phi)|]=0, \quad \text{ and } \quad  \lim_{T\to \infty} \E[I_3^2(T,\phi)]=0.
\end{align}
where $I_2(T,\phi)$ and $I_3(T,\phi)$ are as in \eqref{ae7.32}.

\subsection{Convergence of $I_2(T,\phi)$}
Recall from \eqref{ae7.32} that
\begin{align*}
I_2(T, \phi)=\frac{1}{2T}   \int_0^T dt \int  \Big(V_1^\phi(t, x)^2-u_T^\phi(t,x)^2\Big) dx.
\end{align*}
where $u_T^\phi(t,x)$ is defined as in \eqref{ea6.1}. In view of \eqref{ae7.31}, we get 
\begin{align*} 
0\leq I_2(T, \phi)= &\frac{1}{2T}  \int_0^T ds \int  \Big(V_1^\phi(t, x)^2-[V_1^\phi(t, x)-v_T^\phi(t, x)]^2\Big) dx\\
\leq &\frac{1}{T} \int_0^T dt \int  V_1^\phi(t, x)v_T^\phi(t, x)  dx.
  \end{align*}
  It suffices to prove that 
\begin{align}\label{ae7.33}
  \lim_{T\to \infty}\frac{1}{T} \int_0^T dt  \int  \E[ V_1^\phi(t, x)v_T^\phi(t, x)]  dx=0.
\end{align}
  By using \eqref{ae7.34} and $u_T^\phi(t,x)\leq V_1^\phi(t,x)$, we get
 \begin{align}\label{ae7.35}
0\leq v_T^\phi(t,x)\leq \frac{1}{T^{1/2}} & \int_0^t ds \int p_{t-s}(x,y) V_1^\phi(s,y)^2 dy\nn\\
&+\int_0^t ds \int p_{t-s}(x,y) v_T^\phi(s,y) \dot{W}(s,y) dy.
\end{align}
Hence for any $x,y\in \R^d$ and $t>0$, we get
\begin{align*} 
0\leq &V_1^\phi(t, x)v_T^\phi(t, y)\leq \Big[Q_t \phi(x)+ \int_0^t ds \int p_{t-s}(x,z) V_1^\phi(s,z) \dot{W}(s,z) dz\Big]\\
&\times \Big[  \frac{1}{T^{1/2}}  \int_0^t ds \int p_{t-s}(y,w) V_1^\phi(s,w)^2 dw+\int_0^t ds \int p_{t-s}(y,w) v_T^\phi(s,w) \dot{W}(s,w) dw\Big].
\end{align*}
By taking expectations, we have
\begin{align}\label{ae7.36} 
 \E[V_1^\phi(t, x)& v_T^\phi(t, y)]  \leq    T^{-1/2}  Q_t \phi(x)  \int_0^t ds \int p_{t-s}(y,w) \E[V_1^\phi(s, w)^2] dw\nn \\
 &+ \int_0^t ds \int \int p_{t-s}(x,z)   p_{t-s}(y,w) \E[V_1^\phi(s, z)  v_T^\phi(s,w)] g(z,w) dz dw.
\end{align}

Define  
 \begin{align*} 
 H(t,y)= \int_0^t ds \int p_{t-s}(y,w) \E[V_1^\phi(s, w)^2] dw=\int_0^t ds \int p_{t-s}(y, w)  V_s^{\phi,\phi}(w,w)dw.
\end{align*}
Set $F(t, x, y)$ to be the solution of 
\begin{align}\label{ae7.37} 
 F(t, x, y)  =   & T^{-1/2}   Q_t \phi(x)  H(t,y) \nn\\
 &+ \int_0^t ds \int \int p_{t-s}(x,z)   p_{t-s}(y,w)  F(s, z, w) g(z,w) dz dw.
\end{align}

The following comparison lemma allows us to bound $\E[V_1^\phi(t, x) v_T^\phi(t, y)]$ by $ F(t, x, y) $.

  \begin{lemma}\label{l4.3}
Let $d\geq 1$. For any two continuous functions $F(t,x)$ and $G(t,x)$ defined on $[0, \infty) \times \R^d$, if there is some continuous function $\{\alpha(t,x): t\geq 0, x\in \R^d\}$ such that for all $t\geq 0$ and $x\in \R^d$,
\begin{align*}
 G(t,x)\leq \alpha(t,x)+ \int_0^t ds \int p_{t-s}(x,y) G(s,y)dy,
\end{align*}
 and
\begin{align*}
F(t,x)= \alpha(t,x)+ \int_0^t ds \int p_{t-s}(x,y) F(s,y)dy,
\end{align*}
 then 
   \begin{align}\label{e5.45}
 G(t,x)\leq F(t,x), \quad \forall  t\geq 0, x\in \R^d.
\end{align}
\end{lemma}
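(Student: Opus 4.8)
The plan is to reduce the statement to a Gronwall-type iteration. Write
\[
\mathcal{K}h(t,x):=\int_0^t ds\int_{\R^d} p_{t-s}(x,y)\,h(s,y)\,dy ,
\]
and set $D:=F-G$. The key structural fact is that $\mathcal{K}$ is order-preserving, because $p_{t-s}\ge 0$: if $h_1\ge h_2$ then $\mathcal{K}h_1\ge \mathcal{K}h_2$.

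First I would subtract the two relations in the hypothesis: since $G(t,x)\le \alpha(t,x)+\mathcal{K}G(t,x)$ while $F(t,x)=\alpha(t,x)+\mathcal{K}F(t,x)$, nonnegativity of the heat kernel gives
\[
D(t,x)=F(t,x)-G(t,x)\ \ge\ \mathcal{K}F(t,x)-\mathcal{K}G(t,x)=\mathcal{K}D(t,x),\qquad\forall\, t\ge 0,\ x\in\R^d .
\]
Applying the order-preserving map $\mathcal{K}$ repeatedly to $D\ge \mathcal{K}D$ yields $\mathcal{K}D\ge \mathcal{K}^2D$ and, by induction, $D\ge \mathcal{K}^n D$ for every $n\ge 1$.

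Next I would compute the $n$-fold iterate. Using the Chapman--Kolmogorov identity $\int_{\R^d} p_r(x,y)p_{r'}(y,z)\,dy=p_{r+r'}(x,z)$ (equivalently the Markov property of $d$-dimensional Brownian motion), an easy induction on $n$ shows
\[
\mathcal{K}^n D(t,x)=\int_0^t \frac{(t-s)^{n-1}}{(n-1)!}\Big(\int_{\R^d} p_{t-s}(x,y)\,D(s,y)\,dy\Big)\,ds .
\]
Since $\int_{\R^d}p_{t-s}(x,y)\,dy=1$, and since in every instance where the lemma is used below $D=F-G$ is bounded on $[0,t]\times\R^d$ for each fixed $t$ (this is the only place a mild spatial growth control on $F,G$ enters), one gets $|\mathcal{K}^n D(t,x)|\le \frac{t^n}{n!}\,M_t$ with $M_t:=\sup_{[0,t]\times\R^d}|D|<\infty$, so $\mathcal{K}^n D(t,x)\to 0$ as $n\to\infty$. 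Combining this with $D(t,x)\ge \mathcal{K}^n D(t,x)\ge -\frac{t^n}{n!}M_t$, letting $n\to\infty$ gives $D(t,x)\ge 0$, which is \eqref{e5.45}.

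The computation of $\mathcal{K}^n D$ and the final bound are routine; the only genuine subtlety is justifying $\mathcal{K}^n D\to 0$, which needs some control on $D$ in the spatial variable. This is harmless in the present paper, because the lemma is applied to conditional second moments such as $\E[V_1^\phi(t,\cdot)\,v_T^\phi(t,\cdot)]$ and to the solution $F$ of \eqref{ae7.37}, all of which are bounded in $x$ on bounded time intervals; if one wishes to keep the statement for arbitrary continuous $F,G$, one first truncates in $x$ (replacing $\R^d$ by a large ball in the kernel, comparing, and then letting the radius grow) before running the iteration.
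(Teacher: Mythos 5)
Your proof is correct and takes a genuinely different route from the paper's. The paper works with $H=G-F$, which satisfies $H\leq \mathcal{K}H$ (with $\mathcal{K}$ the heat-convolution operator), replaces $H$ by $H^+=H\vee 0$, integrates over $\R^d$ using $\int p_{t-s}(x,y)\,dx=1$ to obtain the scalar inequality $h_t\leq \int_0^t h_s\,ds$ for $h_t:=\int H^+(t,x)\,dx$, and then invokes the ordinary Gronwall lemma to conclude $h_t\equiv 0$, hence $H^+\equiv 0$ by continuity. You instead run a Picard-type iteration: from $D\geq \mathcal{K}D$ you deduce $D\geq \mathcal{K}^n D$, compute $\mathcal{K}^n D$ in closed form via Chapman--Kolmogorov and Fubini, and use the factorial decay $|\mathcal{K}^n D(t,x)|\leq M_t\,t^n/n!$ to send the lower bound to zero. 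Your computation of $\mathcal{K}^n D$ and the resulting bound are correct. The two arguments trade off different implicit regularity: the paper's proof tacitly needs $\int_{\R^d} H^+(t,x)\,dx<\infty$ (otherwise $h_t\leq \int_0^t h_s\,ds$ is vacuous and Gronwall does not apply), while yours needs $\sup_{[0,t]\times\R^d}|F-G|<\infty$. Neither condition follows from continuity alone, so your observation that the lemma as stated is slightly incomplete is a fair one, and it applies to the paper's proof just as much as to yours; in all the applications in the paper the relevant functions are nonnegative, bounded, and integrable in $x$, so either route goes through. Your truncation remark at the end is a sensible way to make the statement self-contained if one cares to.
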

 \begin{proof}
The proof is deferred to Appendix \ref{a2}.
 \end{proof}

In view of \eqref{ae7.36} and \eqref{ae7.37}, we apply Lemma \ref{l4.3} to conclude that 
\[
 \E[V_1^\phi(t, x) v_T^\phi(t, y)] \leq F(t, x, y), \quad \forall t\geq 0, x,y\in \R^d.
\]
Therefore \eqref{ae7.33} is reduced to proving that
\begin{align}\label{ae7.38}
  \lim_{T\to \infty}\frac{1}{T} \int_0^T dt  \int  F(t, x, x)  dx=0.
\end{align}

One can easily check that
 \begin{align*} 
 \frac{\partial}{\partial t}H(t,y)= \frac{\Delta_y}{2} H(t,y)+V_t^{\phi,\phi}(y,y).
\end{align*}
Hence we get
 \begin{align} \label{ea9.91}
 \frac{\partial}{\partial t} F(t, x, y)&=   T^{-1/2}  P_t \phi(x)  H(t,y)+  T^{-1/2}  Q_t \phi(x) \Big[\frac{\Delta_y}{2} H(t,y)+V_t^{\phi,\phi}(y,y)\Big]\nn\\
 &+\frac{\Delta_{(x,y)}}{2} \Big[F(t, x, y)-T^{-1/2}  Q_t \phi(x) H(t,y)\Big]+F(t, x, y) g(x,y).
\end{align}
Notice that
 \begin{align*} 
 \frac{\Delta_{(x,y)}}{2} & \Big[T^{-1/2}Q_t \phi(x) H(t,y)\Big]= T^{-1/2}H(t,y) \frac{\Delta_{x}}{2} Q_t \phi(x)+ T^{-1/2} Q_t \phi(x) \frac{\Delta_{y}}{2} H(t,y)\\
 &=T^{-1/2} H(t,y) \Big[ P_t\phi(x)-\phi(x)\Big]+  T^{-1/2}Q_t \phi(x) \frac{\Delta_{y}}{2} H(t,y).
\end{align*}
Plug in the above  to see that \eqref{ea9.91} becomes
 \begin{align} \label{ea9.99}
 \frac{\partial}{\partial t} F(t, x, y)&= \frac{\Delta_{(x,y)}}{2} F(t, x, y)  +F(t, x, y) g(x,y)\nn\\
  &+  T^{-1/2}  Q_t \phi(x)  V_t^{\phi,\phi}(y,y) +T^{-1/2}   \phi(x)H(t,y).
\end{align}
By Feynman-Kac's formula, we conclude that
 \begin{align*} 
 F(t, x, y)&=T^{-1/2}\int_0^t   \Pi_{(x,y)} \Big[  Q_{t-s} \phi(B_s)  V_{t-s}^{\phi,\phi}(\widetilde{B}_s,\widetilde{B}_s) e^{\int_0^s g(B_u, \widetilde{B}_u) du}\Big] ds\\
 &+ T^{-1/2}\int_0^t   \Pi_{(x,y)} \Big[   \phi(B_s)H({t-s},\widetilde{B}_s) e^{\int_0^s g(B_u, \widetilde{B}_u) du} \Big] ds.
\end{align*}
Define
\begin{align}\label{ae8.28}
F_1(t,x)=&\int_0^t   \Pi_{(x,x)} \Big[  Q_{t-s} \phi(B_s)  V_{t-s}^{\phi,\phi}(\widetilde{B}_s,\widetilde{B}_s) e^{\int_0^s g(B_u, \widetilde{B}_u) du}\Big] ds,\nn\\
F_2(t,x)=&\int_0^t   \Pi_{(x,x)} \Big[   \phi(B_s)H({t-s},\widetilde{B}_s) e^{\int_0^s g(B_u, \widetilde{B}_u) du} \Big] ds.
\end{align}
Using the above, we may reduce the proof of \eqref{ae7.38} to proving
\begin{align}\label{ae7.39}
  &\lim_{T\to \infty}T^{-3/2} \int_0^T dt  \int  F_1(t,x)  dx=0,
  \end{align}
  and
\begin{align}\label{ae8.27}
 \lim_{T\to \infty}T^{-3/2} \int_0^T dt  \int  F_2(t,x)  dx=0.
\end{align}

We will prove the above in two steps.\\

\no {\bf Step 1.} For any $x\in \R^d$ and $t\geq 0$, we have
 \begin{align*} 
&\Pi_{(x,x)} \Big[  Q_{t-s} \phi(B_s)  V_{t-s}^{\phi,\phi}(\widetilde{B}_s,\widetilde{B}_s) e^{\int_0^s g(B_u, \widetilde{B}_u) du}\Big]\\
&=\int_0^{t-s} \Pi_{(x,x)} \Big[  P_r \phi(B_s)  V_{t-s}^{\phi,\phi}(\widetilde{B}_s,\widetilde{B}_s) e^{\int_0^s g(B_u, \widetilde{B}_u) du}\Big] dr\\
&\leq \int_0^{t-s} \Big(\Pi_{(x,x)} \Big[  (P_r \phi(B_s))^p  V_{t-s}^{\phi,\phi}(\widetilde{B}_s,\widetilde{B}_s)^p\Big]\Big)^{1/p} \Big(\Pi_{(x,x)} \Big[  e^{q\int_0^s g(B_u, \widetilde{B}_u) du}\Big]\Big)^{1/q} dr\\
&\leq C\int_0^{t-s} \Big(  \int p_{s}(x,z) (P_r \phi(z))^p  dz  \Big)^{1/p}  dr \cdot \Big(\int p_{s}(x,w)  V_{t-s}^{\phi,\phi}(w,w)^p dw \Big)^{1/p}.
\end{align*}
In the first inequality, we have used H\"older's inequality with some $p\in (1, \frac{10}{9})$ and $q>1$ such that $1/p+1/q=1$. We will pick $p=p(d, \alpha)$ close $1$ below. The last inequality uses Lemma \ref{l1.1}. By using \eqref{ae7.51}, we get
 \begin{align*} 
\int_0^{t-s} \Big(  \int p_{s}(x,z) [P_r \phi(z)]^p  dz  \Big)^{1/p}  dr &\leq \int_0^{t-s} \Big(  \int_{|z|\leq K} p_{s+r}(x,z)    dz   \Big)^{1/p}  dr\\
&\leq C(|x|^{2-\frac{d}{p}} \wedge 1),
 \end{align*}
 where the last inequality follows by Corollary \ref{ac4.6}. 
 On the other hand, again by Corollary \ref{ac4.6}, we have 
 \[
 V_{t-s}^{\phi,\phi}(w,w)^p \leq C\Big[|w|^{4p-2d}\wedge 1\Big], \quad \forall s,t\geq 0, w\in \R^d.
 \]
 We conclude from the above that 
 \begin{align} \label{ae8.23}
F_1(t,x)&\leq C \Big(|x|^{2-\frac{d}{p}}\wedge 1\Big)  \int_0^t \Big(\int p_{s}(x,w)  \Big[|w|^{4p-2d}\wedge 1\Big] dw \Big)^{1/p}  ds.
\end{align}
 
 Define
 \begin{align} \label{ae9.10}
 J_t(x):=\int_0^t \Big(\int p_{s}(x,y)  \Big[|y|^{4p-2d}\wedge 1\Big] dy \Big)^{1/p}  ds.
\end{align}
\begin{lemma}\label{al6.1}
Let $d\geq 5$ and $p\in (1, \frac{10}{9})$. For any $t\geq 1$ and $x\in \R^d$, we have
 \begin{align} \label{ae8.22}
J_t(x) \leq   C I_t(x),
 \end{align}
 where $I_t(x)$ are as in \eqref{ae9.11}.
\end{lemma}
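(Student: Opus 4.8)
The plan is to split the claim according to the minimum defining $I_t(x)$ in \eqref{ae9.11}: it suffices to prove separately that $J_t(x)\le C$ and that $J_t(x)\le C\int_0^t s^{\frac d2-\frac{d}{2p}}p_{8s}(x)\,ds$, since combining the two then gives $J_t(x)\le C\bigl(1\wedge\int_0^t s^{\frac d2-\frac{d}{2p}}p_{8s}(x)\,ds\bigr)=CI_t(x)$. Write $\beta:=2d-4p$, so the inner factor in $J_t$ (see \eqref{ae9.10}) is $\int p_s(x,y)f(y)\,dy$ with $f(y):=|y|^{-\beta}\wedge1$. The hypotheses $d\ge5$ and $p\in(1,\tfrac{10}{9})$ enter only through the numerical consequences $\beta>d$, $\tfrac{d}{2p}>\tfrac94>1$, and $\tfrac dp>4>2$; each estimate below reduces to one of these.

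For the uniform bound I would use the two crude estimates $\int p_s(x,y)f(y)\,dy\le\|f\|_\infty=1$ and $\int p_s(x,y)f(y)\,dy\le\|p_s\|_\infty\|f\|_1\le Cs^{-d/2}$, the finiteness of $\|f\|_1$ being where $\beta>d$ is used; then $J_t(x)\le\int_0^11\,ds+C\int_1^\infty s^{-d/(2p)}\,ds<\infty$ because $\tfrac{d}{2p}>1$. This step uses neither $t\ge1$ nor the location of $x$.

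For the second bound I would follow the scheme of Lemma~\ref{la4.1}, treating $|x|\le2$ and $|x|>2$ separately. For $|x|\le2$ and $t\ge1$, a one-line lower bound on the $s$-integrand of $I_t$ (which equals $(16\pi)^{-d/2}s^{-d/(2p)}e^{-|x|^2/(16s)}$ and is bounded below on $[\tfrac12,1]$) gives $I_t(x)\ge c>0$, and the uniform bound already finishes this case. For $|x|>2$ I would estimate $\int p_s(x,y)f(y)\,dy$ by cutting the $y$-integral at $|y|=|x|/2$: on $\{|y|\le|x|/2\}$, using $|x-y|\ge|x|/2$ and $\|f\|_1<\infty$, the contribution is $\le Cs^{-d/2}e^{-|x|^2/(8s)}\le Cp_{8s}(x)$; on $\{|y|>|x|/2\}$, using $f(y)\le(|x|/2)^{-\beta}$ together with the $L^1$--$L^\infty$ bound, the contribution is $\le C\min(|x|^{-\beta},\,s^{-d/2}|x|^{d-\beta})$. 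Raising to the power $1/p$ (subadditivity of $u\mapsto u^{1/p}$), integrating over $s\in(0,t)$ with the split at $s=|x|^2$, and converting to Gamma integrals via $u=|x|^2/(16s)$, each piece should be dominated by $C\int_0^t s^{\frac d2-\frac{d}{2p}}p_{8s}(x)\,ds$: convergence of tails uses $\tfrac{d}{2p}>1$, the polynomial pieces produce the correct $|x|$-power because $\tfrac dp>4$, and $p<2$ lets $e^{-|x|^2/(8ps)}$ be absorbed into $e^{-|x|^2/(16s)}$.

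I expect the main obstacle to be the estimate of $\int p_s(x,y)f(y)\,dy$ for $|x|$ large and $s$ of order $1$ or below: there $p_s(x,\cdot)$ concentrates near $y=x$, where $f$ is only of polynomial size $|x|^{-\beta}$, so the annulus $\{|y|\asymp|x|\}$ contributes essentially $|x|^{-\beta}$ and the naive bound cannot be improved there. Reconciling the resulting contribution with the heat-kernel decay built into $I_t(x)$ is what forces the careful split above — Gaussian tail from $\{|y|\le|x|/2\}$ together with the crossover at $s\sim|x|^2$ where the $L^1$--$L^\infty$ bound takes over — rather than estimating either contribution in isolation. Once that $s$-pointwise estimate is in hand, the remaining steps are routine manipulations of Gamma integrals under the exponent constraints noted above.
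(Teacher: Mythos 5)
Your proposal follows the paper's proof structure essentially verbatim: the constant bound $J_t(x)\le C$ for small $|x|$ paired with a lower bound on $I_t(x)$, and for large $|x|$ a split of the $y$-integral into a Gaussian-tail piece and a polynomial piece near $|y|\asymp|x|$. (Cutting at $|y|=|x|/2$ vs.\ $|y-x|=|x|/2$, and at $|x|=2$ vs.\ $|x|=4$, are immaterial variants.) However, the step you flag as ``the main obstacle'' is not in fact resolved, and the inequality $J_t(x)\le CI_t(x)$ appears to fail as stated. For $s\in(\tfrac12,1]$ and $|x|$ large, $p_s(x,\cdot)$ puts a fixed amount of mass on the unit ball around $x$, where $|y|^{4p-2d}\wedge 1\asymp|x|^{4p-2d}$; this gives $J_t(x)\ge c\,|x|^{4-\frac{2d}{p}}$ for every $t\ge1$, a \emph{polynomial} lower bound in $|x|$. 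On the other hand, for $t$ fixed and $|x|\gg\sqrt t$, the integrand $s^{-\frac{d}{2p}}e^{-|x|^2/(16s)}$ is increasing on $(0,t]$ (its maximizer $p|x|^2/(8d)$ lies above $t$), so $\int_0^t s^{\frac d2-\frac{d}{2p}}p_{8s}(x)\,ds\le C\,t^{1-\frac{d}{2p}}e^{-|x|^2/(16t)}$, a Gaussian tail in $|x|$. No constant bounds a polynomial by a Gaussian, so $J_t(x)\le CI_t(x)$ cannot hold uniformly in $x$ for fixed $t$; the crossover at $s\sim|x|^2$ you invoke does not help because the whole range $(0,t]$ lies below it.

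For context, the paper's own proof has exactly this gap: in the case $|x|>4$ it asserts $\int_{|x|^2/16}^\infty r^{\frac{d}{2p}-2}e^{-r}\,dr\ge\int_1^\infty r^{\frac{d}{2p}-2}e^{-r}\,dr$, but for $|x|>4$ one has $|x|^2/16>1$, so the inequality points the other way. The contrast with Lemma~\ref{la4.1} explains why the scheme cannot be transplanted unchanged: there $\widetilde Q(t,x)$ integrates $p_s(x,\cdot)$ against the \emph{compactly supported} weight (the indicator of $\{|z|\le K\}$), so for $|x|$ large there is no contribution from the region near $y\approx x$ and only the Gaussian-tail term survives. Replacing it by the polynomially decaying $|y|^{4p-2d}\wedge1$ produces a contribution near $y\approx x$ that persists for all $s$ and cannot be absorbed into the $t$-truncated Gaussian $\int_0^t s^{\frac d2-\frac{d}{2p}}p_{8s}(x)\,ds$. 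What does hold, by the same computation you outline, is the $t$-uniform bound $J_t(x)\le C\bigl(|x|^{6-\frac{2d}{p}}\wedge1\bigr)$, i.e.\ $J_t(x)\le CI_\infty(x)$; one would need to re-examine whether the downstream estimate~\eqref{ae10.1} survives with only this weaker, $t$-independent input.
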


\begin{proof}
First consider $|x|\leq 4$. Note that 
 \begin{align*} 
 \int_0^1 \Big(\int p_{s}(x,y)   \Big[|y|^{4p-2d}\wedge 1\Big]  dy \Big)^{1/p}  ds\leq \int_0^1 1^{1/p}  ds=1.
\end{align*}
Turning to $s\geq 1$, we have
 \begin{align*} 
 &\int p_{s}(x,y)   \Big[|y|^{4p-2d}\wedge 1\Big]  dy\leq  \int_{|y|\leq 1} \frac{1}{s^{d/2}}  dy+\frac{1}{s^{d/2}}  \int_{|y|> 1}  |y|^{4p-2d}  dy.
 \end{align*}
Since $p<\frac{10}{9} \leq \frac{d}{4}$, we get $4p-2d+d-1<-1$ and hence
 \begin{align} \label{ae8.2}
\int_{|y|> 1}  |y|^{4p-2d}  dy=C \int_1^\infty r^{4p-2d} r^{d-1} dr \leq C.
\end{align}
 It follows that
  \begin{align*} 
 &\int p_{s}(x,y)   \Big[|y|^{4p-2d}\wedge 1\Big]  dy \leq C\frac{1}{s^{d/2}},
\end{align*}
thus giving
 \begin{align} \label{ae9.39}
J_t(x) \leq 1+\int_1^\infty C s^{-\frac{d}{2p}} ds\leq C, \quad \forall |x|\leq 4.
\end{align}

Next, for any $|x|>4$, we have 
 \begin{align*} 
J_t(x)&\leq \int_0^t \Big(\int_{|y-x|\geq \frac{|x|}{2}} p_{s}(x,y)   \Big[|y|^{4p-2d}\wedge 1\Big]  dy \Big)^{1/p}  ds\\
 &+\int_0^\infty \Big(\int_{|y-x|<\frac{|x|}{2}} p_{s}(x,y)   \Big[|y|^{4p-2d}\wedge 1\Big]  dy \Big)^{1/p}  ds:= I_1+ I_2.
\end{align*}
For $I_1$, we use $|y-x|\geq \frac{|x|}{2}$ to get
\[
p_s(x,y) \leq Cs^{-\frac{d}{2}} e^{-\frac{|x|^2}{8s}},
\]
and so
 \begin{align} \label{ae9.21}
 I_1 &\leq \int_0^t \Big(Cs^{-\frac{d}{2}} e^{-\frac{|x|^2}{8s}} \int   \Big[|y|^{4p-2d}\wedge 1\Big]  dy \Big)^{1/p}  ds\nn\\
 &\leq C\int_0^t s^{-\frac{d}{2p}} e^{-\frac{|x|^2}{16s}} ds.
 \end{align}
The last inequality above follows by \eqref{ae8.2} and $p<2$.\\
 
 For $I_2$, we use $|y|\geq |x|-|y-x|\geq \frac{|x|}{2}>1$ to get
 \begin{align} \label{ae8.21}
 I_2 &\leq |x|^{4-\frac{2d}{p}} \int_0^\infty \Big(\int_{|y-x|<\frac{|x|}{2}} p_{s}(x,y)   dy \Big)^{1/p}  ds.
\end{align}
Notice that
 \begin{align*} 
&\int_{|y-x|<\frac{|x|}{2}} p_{s}(x,y)   dy  \leq C s^{-d/2} \int_{|y-x|<\frac{|x|}{2}}  e^{-\frac{|y-x|^2}{2s}} dy\\
&\leq C s^{-d/2} \int_{0}^{\frac{|x|}{2}}  e^{-\frac{r^2}{2s}} r^{d-1} dr=C   \int_{0}^{\frac{|x|^2}{8s}}  e^{-r} r^{\frac{d-2}{2}} dr.
\end{align*}
Therefore \eqref{ae8.21} becomes
 \begin{align*} 
 I_2 &\leq C|x|^{4-\frac{2d}{p}} \int_0^\infty \Big(\int_{0}^{\frac{|x|^2}{8s}}  e^{-r} r^{\frac{d-2}{2}} dr\Big)^{1/p}  ds \\
 &\leq C|x|^{6-\frac{2d}{p}} \int_0^\infty \frac{1}{u^2} \Big(\int_{0}^{u}  e^{-r} r^{\frac{d-2}{2}} dr\Big)^{1/p}  du.
\end{align*}
When $u\leq 1$, we have 
\begin{align*} 
\int_{0}^{u}  e^{-r} r^{\frac{d-2}{2}} dr\leq Cu^{\frac{d}{2}}.
\end{align*}
The integral of $0\leq u\leq 1$ is then bounded by
\begin{align*} 
\int_0^1 \frac{1}{u^2} Cu^{\frac{d}{2}} du \leq C.
\end{align*}
For $u\geq 1$, we simply use 
\begin{align*} 
\int_{0}^{u}  e^{-r} r^{\frac{d-2}{2}} dr\leq \int_{0}^{\infty}  e^{-r} r^{\frac{d-2}{2}} dr\leq C
\end{align*}
to get  the integral of $u\geq 1$ is at most
\begin{align*} 
\int_1^\infty  C\frac{1}{u^2}    du \leq C.
\end{align*}
 We conclude that $I_2  \leq  C|x|^{6-\frac{2d}{p}}$. One can check that when $t\geq 1$, we have
\begin{align*} 
\int_0^t s^{-\frac{d}{2p}} e^{-\frac{|x|^2}{16s}} ds &\geq \int_0^1 s^{-\frac{d}{2p}} e^{-\frac{|x|^2}{16s}} ds=C |x|^{2-\frac{d}{p}} \int_{\frac{|x|^2}{16}}^\infty r^{-2+\frac{d}{2p}} e^{-r} dr \\
&\geq C |x|^{2-\frac{d}{p}} \int_{1}^\infty r^{-2+\frac{d}{2p}} e^{-r} dr=C|x|^{2-\frac{d}{p}}\geq C|x|^{6-\frac{2d}{p}},
\end{align*}
where the last inequality follows by $|x|>4$ and $p<\frac{d}{4}$. The above implies that
 \begin{align*} 
I_2\leq \int_0^t s^{-\frac{d}{2p}} e^{-\frac{|x|^2}{16s}} ds.
\end{align*}

Together with \eqref{ae9.21}, we get
 \begin{align*} 
J_t(x)\leq C\int_0^t s^{-\frac{d}{2p}} e^{-\frac{|x|^2}{16s}} ds\leq C\int_0^t s^{\frac{d}{2}-\frac{d}{2p}} p_{8s}(x) ds, \quad \forall |x|>4.
\end{align*}
Recall from \eqref{ae9.39} that  $J_t(x)\leq C$ for all $|x|\leq 4$. Recalling that \eqref{ae8.25} gives
 \begin{align*} 
 \int_0^t s^{\frac{d}{2}-\frac{d}{2p}} p_{8s}(x) ds \leq C|x|^{2-\frac{d}{p}} <1 \quad \text{ when $|x|>1$ is large},
\end{align*}
we may complete the proof by adjusting the constant.
\end{proof}

 Apply the above lemma in \eqref{ae8.23} to see that  
 \begin{align} \label{ae8.24}
F_1(t,x)&\leq C  (|x|^{4-\frac{2d}{p}} \wedge 1),
\end{align}
  We conclude that
 \begin{align} \label{ae8.33}
\int  F_1(t,x)  dx\leq C+C\int_{|x|>1} |x|^{4-\frac{2d}{p}} dx \leq C,
\end{align}
where the last inequality follows by \eqref{ae8.59}. So the proof of \eqref{ae7.39} is complete.\\

\no {\bf Step 2.}  Now we deal with $F_2(t,x)$ from \eqref{ae8.28}. For any $x\in \R^d$ and $t\geq s\geq 0$, we have
   \begin{align} \label{ae8.32}
&\Pi_{(x,x)} \Big[    \phi(B_s)H({t-s},\widetilde{B}_s) e^{\int_0^s g(B_r, \widetilde{B}_r) dr}\Big]\nn\\
&=\int_0^{t-s}     \Pi_{(x,x)} \Big[   \phi(B_s) \Big( \int    p_{u}(\widetilde{B}_s, z)V_{t-s-u}^{\phi,\phi}(z,z) dz\Big)   e^{\int_0^s g(B_r, \widetilde{B}_r) dr}\Big] du.
 \end{align}
 Apply H\"older's inequality to see that the expectation above is at most
 \begin{align} \label{ae8.31}
&  \Big(\Pi_{(x,x)} \Big[  \phi(B_s)^p \Big( \int   p_{u}(\widetilde{B}_s, z)  V_{t-s-u}^{\phi,\phi}(z,z) dz \Big)^p  \Big]\Big)^{1/p} \Big(\Pi_{(x,x)} \Big[  e^{q\int_0^s g(B_r, \widetilde{B}_r) dr}\Big]\Big)^{1/q} \nn \\
&\leq C \Big(\int p_s(x,z)  \phi(z)^p dz\Big)^{1/p}  \Big(\Pi_{(x,x)} \Big[  \Big( \int   p_{u}(\widetilde{B}_s, z)  V_{t-s-u}^{\phi,\phi}(z,z) dz \Big)^p \Big]\Big)^{1/p},
 \end{align}
 where the last inequality follows by Lemma \ref{l1.1}.
 Using Jensen's inequality to see the second term in \eqref{ae8.31} is bounded by
  \begin{align*} 
&   \Big(\Pi_{(x,x)} \Big[ \int   p_{u}(\widetilde{B}_s, z)  \Big(V_{t-s-u}^{\phi,\phi}(z,z)\Big)^p dz  \Big]\Big)^{1/p} \\
&=    \Big(  \int     p_{s+u}(x, z)   \Big(V_{t-s-u}^{\phi,\phi}(z,z)\Big)^p dz  \Big)^{1/p}\leq  \Big(  \int     p_{s+u}(x, z)   (|z|^{4p-2d} \wedge 1) dz  \Big)^{1/p},
 \end{align*}
 where the last inequality uses Corollary \ref{ac4.6}.
 
 Returning to \eqref{ae8.32}, we get 
   \begin{align*}  
&\Pi_{(x,x)} \Big[    \phi(B_s)H({t-s},\widetilde{B}_s) e^{\int_0^s g(B_r, \widetilde{B}_r) dr}\Big]\\
&\leq C  \Big(\int p_s(x,z)  \phi(z)^p dz\Big)^{1/p}\int_0^{t-s}      \Big(  \int     p_{s+u}(x, z)   (|z|^{4p-2d} \wedge 1) dz  \Big)^{1/p} du\\
&\leq C \Big(\int_{|z|\leq K} p_s(x,z)  dz\Big)^{1/p} \cdot J_t(x).
 \end{align*}
 Therefore we have
 \begin{align*} 
& F_2(t,x)\leq C \int_0^t \Big(\int_{|z|\leq K} p_s(x,z)  dz\Big)^{1/p} ds \cdot J_t(x) \leq C(|x|^{2-\frac{d}{p}} \wedge 1)  \cdot J_t(x),
 \end{align*}
 where the last inequality uses Lemma \ref{la4.1} and \eqref{ae8.25}. The above gives the same bound as in \eqref{ae8.23} for $F_1(t,x)$.
 Hence, \eqref{ae8.27} follows similarly.

\subsection{Convergence of $I_3(T,\phi)$}
 
Recall $I_3(T, \phi)$ from \eqref{ae7.32} to see that 
\begin{align}\label{ae9.0}
\E[I_3(T, \phi)^2]=T^{-1}\int_0^T dt \int \int \E[v_T^\phi(t, x) v_T^\phi(t, y)] g(x,y) dx dy.
\end{align}
By using \eqref{ae7.31} and \eqref{ae7.34}, we get that for each $t\geq 0$ and $x,y\in \R^d$, 
\begin{align*}
&0\leq   v_T^\phi(t, x) v_T^\phi(t, y)  \leq   \Big[\frac{1}{2T^{1/2}}  \int_0^t ds \int p_{t-s}(x,z) V_1^\phi(s,z)^2 dz\nn\\
 &+\int_0^t ds \int p_{t-s}(x,z) v_T^\phi(s,z) \dot{W}(s,z) dz\Big]  \cdot \Big[\frac{1}{2T^{1/2}}  \int_0^t du \int p_{t-u}(y,w) V_1^\phi(u,w)^2 dw\nn\\
 &+\int_0^t du \int p_{t-u}(y,w) v_T^\phi(u,w) \dot{W}(u,w) dw\Big].
\end{align*}
Take expectation to see that
\begin{align*}
&  \E[ v_T^\phi(t, x) v_T^\phi(t, y)]  \leq   \frac{1}{T}  \int_0^t ds \int p_{t-s}(x,z) \int_0^t du \int p_{t-u}(y,w)  \E[V_1^\phi(s,z)^2 V_1^\phi(u,w)^2] dz dw\nn\\
 &+ \int_0^t ds \int \int p_{t-s}(x,z) p_{t-s}(y,w) \E[v_T(s,z)  v_T(s,w)] g(z,w) dz dw
\end{align*}

 We need to bound $\E[V_1^\phi(s,z)^2 V_1^\phi(u,w)^2]$.
Lemma \ref{al4.6} implies that 
\begin{align*}
\E[V_1^\phi(t,x) V_1^\phi(t,y)]\leq C\cdot   \widetilde{Q}(t,x)   \widetilde{Q}(t,y).
\end{align*}
 The following fourth-moment bounds follow similarly to the derivation of Lemma \ref{al4.6}.

\begin{lemma}\label{al6.3}
For any $t\geq 0$, $x_i\in \R^d$ for $1\leq i\leq 4$, we have
\begin{align*}
\E\Big[V_1^\phi(t,x_1) V_1^\phi(t,x_2)V_1^\phi(t,x_3)V_1^\phi(t,x_4)\Big]\leq C   \prod_{i=1}^4 \widetilde{Q}(t,x_i).
\end{align*}
\end{lemma}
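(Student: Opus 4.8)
The plan is to mimic the proof of Lemma \ref{al4.6}, which handled the second moment, and push the same circle of ideas through to the fourth moment. First I would set up the moment recursion: differentiating the conditional Laplace transform \eqref{e2.2} four times (or equivalently iterating the SPDE \eqref{e10.5} in the spirit of \eqref{ae6.11}), one obtains a closed formula for $\E[\prod_{i=1}^4 V_1^{\phi}(t,x_i)]$ in terms of lower-order quantities. The cleaner route, however, is to expand $V_1^{\phi}(t,x)$ itself via its Feynman--Kac/mild representation \eqref{ae1.33} and take the product of four copies. Taking expectations and using the moment formulas of Lemma \ref{l2.2}, the fourth moment becomes a sum, over pairings and chains of the four spatial arguments, of iterated integrals whose integrands are products of the deterministic kernels $Q_t\phi$, transition densities $p_r$, the correlation kernel $g$, and the running exponential $\exp(\int_0^s g(B_u,\widetilde B_u)\,du)$ attached to pairs of independent Brownian motions. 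Concretely, after conditioning and using \eqref{e10.23}, each term carries at most one exponential factor per pair of particles, i.e.\ at most two such factors overall.

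The second step is to control those exponential factors. For each occurrence of $\exp(q\int_0^s g(B_u,\widetilde B_u)\,du)$ I would apply H\"older's inequality to separate it from the polynomial factors $\phi(B_\bullet)$ and $Q_r\phi(\widetilde B_\bullet)$, exactly as in the display leading to \eqref{eb5.1}, and then invoke Lemma \ref{l1.1} to bound the exponential moment by $2$ (here is where the smallness of $\eps$ and the choice of $q=q(d,\alpha)$ enter). Since there are at most two exponential factors, I would use H\"older with enough exponents --- say $p$ close to $1$ together with two copies of some large $q$ --- so that the $g$-dependent pieces all disappear and what remains is a product of heat-kernel integrals of $\phi^p$ against the various Brownian endpoints. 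Each such integral, by Jensen's inequality and the support assumption on $\phi$ (the argument around \eqref{ae7.51}), is bounded by $\|\phi\|_\infty^p [\int_{|z|\le K} p_{r}(x_i,z)\,dz]^{1/p}$ integrated in the appropriate time variables, which is precisely $\widetilde Q(t,x_i)$ up to constants. Multiplying the four factors gives $C\prod_{i=1}^4 \widetilde Q(t,x_i)$.

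The main obstacle will be bookkeeping rather than analysis: the fourth-moment formula has several combinatorial types of terms (the $g$-factors can pair the four particles in different ways, and the chained time integrals from iterating \eqref{ae6.11} proliferate), so I would want a uniform way to see that every term, regardless of its pairing structure, factorizes after H\"older into one $\widetilde Q(t,x_i)$ per index. The key structural observation making this work is that $g$ only ever couples \emph{pairs} of particles through an exponential that is itself bounded by Lemma \ref{l1.1}, so after peeling off all exponentials the remaining kernel genuinely splits as a product over the four particles; the chained time integrals are then each dominated by the single time integral defining $\widetilde Q$, using monotonicity in $t$ and Chapman--Kolmogorov to collapse composed heat kernels. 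Once this factorization template is in place, the bound follows by the same estimates as in Lemma \ref{al4.6}, and I would simply remark that the general $n$-th moment admits the analogous bound $C_n\prod_{i=1}^n\widetilde Q(t,x_i)$ by the same induction, though only $n=4$ is needed here.
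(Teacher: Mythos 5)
Your overall strategy lines up with the paper's: peel off the exponential $g$-integrals by H\"older's inequality, control them with Lemma \ref{l1.1}, and use Jensen plus the compact support of $\phi$ to manufacture the $\widetilde Q(t,x_i)$ factors. But there are two concrete gaps that would derail the argument if followed literally.

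First, your count of the exponential factors is wrong. You claim the fourth-moment formula carries ``at most two such factors overall,'' apparently because you picture decomposing the four arguments into two pairs each carrying one exponential as in \eqref{e10.23}. That decomposition is not available: $\E[\prod_{i=1}^4 V_1^\phi(t,x_i)]$ does not split as a product of two-point functions. The paper instead applies It\^o's formula to $\prod_{i=1}^n V_1^\phi(t,x_i)$, takes expectations to obtain a linear PDE, and solves it by Feynman--Kac (equation \eqref{ae10.31}). The resulting representation carries a \emph{single} exponential of $\sum_{1\leq i<j\leq n}\int_0^s g(B^i_r,B^j_r)\,dr$, i.e.\ a sum over all $\binom{n}{2}$ pairs (six for $n=4$), plus further pair exponentials appearing inside the lower-order moments one reaches by recursion. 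This is more than your proposed three-way H\"older with ``two copies of some large $q$'' can absorb.

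Second, you never explain how to decouple the exponential of a \emph{sum} of pair interactions. This is precisely where the paper's generalized H\"older inequality enters:
\begin{align*}
\Pi_{(x_1,\dots,x_n)}\Big[e^{q\sum_{i<j}\int_0^s g(B^i_r,B^j_r)\,dr}\Big]\le \prod_{i<j}\Big(\Pi_{(x_1,\dots,x_n)}\Big[e^{\binom{n}{2}q\int_0^s g(B^i_r,B^j_r)\,dr}\Big]\Big)^{1/\binom{n}{2}},
\end{align*}
and only \emph{then} does Lemma \ref{l1.1} apply, with $q$ replaced by $\binom{n}{2}q$. The constant $\eps$ in \eqref{e0.0} must therefore be chosen small enough for this larger exponent. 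Without this decoupling step, the coupled Brownian motions do not separate into the product form you need.

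Beyond these two points, the route the paper actually follows is cleaner than iterating the mild representation \eqref{ae1.33}: It\^o plus Feynman--Kac packages all the $g$-dependence into one exponential and reduces the bookkeeping to an induction on $n$ (bounding $\E[\prod_{i\neq k}V_1^\phi(t-s,B^i_s)]$ inside \eqref{ae10.31} by the $(n-1)$-moment estimate). You should replace your ``expand the product of four mild solutions and sum over pairings'' step with this Feynman--Kac plus induction structure; once that is in place, your remaining ingredients (H\"older, Lemma \ref{l1.1}, Jensen, support of $\phi$) do close the argument as you describe.
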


\begin{proof}
The proof will be given in Appendix \ref{ac}.
\end{proof}

By using the above, we get
\begin{align*}
  \E[V_1^\phi(s,z)^2 V_1^\phi(u,w)^2]  &\leq  \Big(\E[V_1^\phi(s,z)^4]\Big)^{1/2} \Big(\E[ V_1^\phi(u,w)^4] \Big)^{1/2}\\
&\leq C\cdot   \widetilde{Q}(s, z)^2 \widetilde{Q}(u, w)^2.
 \end{align*}
 Therefore
\begin{align}\label{ae9.1}
&  \E[ v_T^\phi(t, x) v_T^\phi(t, y)]  \leq   \frac{1}{T}  \int_0^t ds \int p_{t-s}(x,z) \widetilde{Q}(s, z)^2 dz \int_0^t du \int p_{t-u}(y,w) \widetilde{Q}(u, w)^2 dw \nn\\
 &+ \int_0^t ds \int \int p_{t-s}(x,z) p_{t-s}(y,w) \E[v_T(s,z)  v_T(s,w)] g(z,w) dz dw.
\end{align}
Set
\begin{align}\label{ae9.4}
  H_0(t,x)=  \int_0^t ds \int p_{t-s}(x,z) \widetilde{Q}(s, z)^2 dz.
 \end{align}
Let   $F_0(t, x, y)$ be the solution of 
\begin{align}\label{ae9.2}
 F_0(t, x, y)  =   & T^{-1}   H_0(t,x)  H_0(t,y) \nn\\
 &+ \int_0^t ds \int \int p_{t-s}(x,z)   p_{t-s}(y,w)  F_0(s, z, w) g(z,w) dz dw.
\end{align}
By comparing \eqref{ae9.1} and \eqref{ae9.2}, we may apply Lemma \ref{l4.3} to obtain  
\begin{align*} 
&  \E[ v_T^\phi(t, x) v_T^\phi(t, y)]  \leq   F_0(t, x, y), \quad \forall t\geq 0, x,y\in \R^d.
\end{align*}
Using \eqref{ae9.0}  and the above, we may reduce the proof of \eqref{ea6.5}  to proving that
\begin{align}\label{ae9.3}
\lim_{T\to \infty} T^{-1}\int_0^T dt \int \int F_0(t, x, y) g(x,y) dx dy=0.
\end{align}

 Similar to the derivation of \eqref{ea9.99}, one can check that  
 \begin{align*} 
 \frac{\partial}{\partial t} F_0(t, x, y)&= \frac{\Delta_{(x,y)}}{2} F_0(t, x, y)  +F_0(t, x, y) g(x,y)\\
  &+  T^{-1}   H_0(t,x) \widetilde{Q}(t, y)^2 +T^{-1}   H_0(t,y)  \widetilde{Q}(t, x)^2.
\end{align*}
By Feynman-Kac's formula, we get
 \begin{align*} 
  F_0(t, x, y)&=T^{-1}\int_0^t   \Pi_{(x,y)} \Big[  H_0(t-s,B_s)   \widetilde{Q}(t-s, \widetilde{B}_s)^2  e^{\int_0^s g(B_r, \widetilde{B}_r) dr}\\
 &\quad +   H_0(t-s,\widetilde{B}_s)   \widetilde{Q}(t-s, {B}_s)^2 e^{\int_0^s g(B_u, \widetilde{B}_u) du} \Big] ds
\end{align*}
Define
\begin{align}\label{ae9.9}
  F_1(t, x, y)&:= \int_0^t   \Pi_{(x,y)} \Big[  H_0(t-s,B_s)   \widetilde{Q}(t-s, \widetilde{B}_s)^2  e^{\int_0^s g(B_r, \widetilde{B}_r) dr}  \Big] ds.
\end{align}
By symmetry, the proof of \eqref{ae9.3} can be reduced to proving that
\begin{align}\label{ae9.8}
\lim_{T\to \infty} T^{-2}\int_0^T dt \int \int F_1(t, x, y) g(x,y) dx dy=0,
\end{align}

Using \eqref{ae9.4}, we get
\begin{align}\label{ae9.6}
& \Pi_{(x,y)} \Big[  H_0(t-s,B_s)   \widetilde{Q}(t-s, \widetilde{B}_s)^2  e^{\int_0^s g(B_r, \widetilde{B}_r) dr}\Big]\nn\\
&=  \int_0^{t-s}    \Pi_{(x,y)} \Big[   \Big( \int   p_{t-s-u}({B}_s, z)  \widetilde{Q}(u,z)^2dz\Big)  \widetilde{Q}(t-s, \widetilde{B}_s)^2 e^{\int_0^s g(B_r, \widetilde{B}_r) dr}\Big] du\nn \\
&\leq   \int_0^{t-s} du   \Big(\Pi_{(x,y)} \Big[  \Big( \int   p_{t-s-u}({B}_s, z)  \widetilde{Q}(u,z)^2 dz\Big)^p  \widetilde{Q}(t-s, \widetilde{B}_s)^{2p} \Big]\Big)^{1/p}\nn \\
&\quad  \quad \quad \times \Big(\Pi_{(x,y)} \Big[  e^{q\int_0^s g(B_r, \widetilde{B}_r) dr}\Big]\Big)^{1/q}.
 \end{align}
 By Lemma \ref{l1.1}, we get
 \[
 \Big(\Pi_{(x,y)} \Big[  e^{q\int_0^s g(B_r, \widetilde{B}_r) dr}\Big]\Big)^{1/q} \leq 2^{1/q}.
 \]
 By Jensen's inequality, we have
 \[
  \Big( \int   p_{t-s-u}({B}_s, z)  \widetilde{Q}(u,z)^2 dz\Big)^p \leq    \int   p_{t-s-u}({B}_s, z)  \widetilde{Q}(u,z)^{2p} dz.
 \]
 Now conclude that \eqref{ae9.6} becomes
 \begin{align}\label{ae9.7}
& \Pi_{(x,y)} \Big[  H_0(t-s,B_s)   \widetilde{Q}(t-s, \widetilde{B}_s)^2  e^{\int_0^s g(B_r, \widetilde{B}_r) dr}\Big]\nn\\
 &\leq C  \int_0^{t-s}     \Big(\Pi_{(x,y)} \Big[  \int   p_{t-s-u}({B}_s, z)  \widetilde{Q}(u,z)^{2p} dz \cdot  \widetilde{Q}(t-s, \widetilde{B}_s)^{2p} \Big]\Big)^{1/p} du \nn\\
 &=C  \int_0^{t-s}     \Big(  \int   p_{t-u}(x, z)  \widetilde{Q}(u,z)^{2p} dz \cdot  \int p_s(y,w) \widetilde{Q}(t-s, w)^{2p} dw \Big)^{1/p} du\nn\\
 &\leq   C  \int_0^{t}     \Big(  \int   p_{u}(x, z)  \widetilde{Q}(t-u,z)^{2p} dz\Big)^{1/p} du \cdot  \Big( \int p_s(y,w) \widetilde{Q}(t-s, w)^{2p} dw \Big)^{1/p}.
 \end{align}
 Returning to \eqref{ae9.9}, we get
 \begin{align*} 
  F_1(t, x, y) \leq  C & \int_0^{t}     \Big(  \int   p_{u}(x, z)  \widetilde{Q}(t-u,z)^{2p} dz\Big)^{1/p} du \\
  &\times  \int_0^t \Big( \int p_s(y,w) \widetilde{Q}(t-s, w)^{2p} dw \Big)^{1/p} ds.
\end{align*}
 Use Lemma \ref{la4.1} and Corollary \ref{ac4.6} to bound $ \widetilde{Q}(t-u,z)^{2p}$ so that the above becomes
  \begin{align} 
  F_1(t, x, y)& \leq  C  J_t(x) J_t(y),
\end{align}
where $J_t(x)$ are as in \eqref{ae9.10}. So the proof of \eqref{ae9.8} is immediate if we show that
\begin{align} \label{ae10.1}
\lim_{T\to \infty} T^{-2}\int_0^T dt \int \int J_t(x) J_t(y) g(x,y) dx dy=0.
\end{align}
 Lemma \ref{al6.1} gives that $J_t(x)\leq CI_t(x)$ if $t\geq 1$. Recall from  Lemma \ref{al5.1} that if we let $\gamma=2+\delta$ for some $\delta\in (0, \frac{\alpha-2}{2}\wedge \frac{1}{2})$, then
  \begin{align*}   
  \int_{\R^d}  \int_{\R^d}  I_t(x) I_t(y) g(x,y) dx dy \leq Ct^{2+d-\frac{d}{p}-\frac{2+\delta}{2}}, \quad \forall t\geq 1.
 \end{align*}  
Pick $p \in (1, \frac{10}{9} \wedge \frac{d}{d-\frac{\delta}{2}})$ so that
\begin{align} \label{ae10.2}
d-\frac{d}{p}-\frac{\delta}{2}<0.
\end{align}
Using the monotonicity of $J_t(x) J_t(y)$ in $t$, we get that  for any $T>1$,
 \begin{align*}
 \int_0^T dt \int \int J_t(x) J_t(y) g(x,y) dx dy \leq C \int_0^T (t\vee 1)^{1+d-\frac{d}{p}-\frac{\delta}{2}}dt \leq CT^{2+d-\frac{d}{p}-\frac{\delta}{2}}.
\end{align*}
Hence \eqref{ae10.1} follows in view of \eqref{ae10.2}. The proof is now complete.

\section{Proof of Proposition \ref{p1.1}}\label{as7}

Recall from \eqref{ea5.1} that
\begin{align} \label{e2.5.0}
\la \lambda, V_1^\phi(T) \ra=T \la \lambda, \phi \ra+\int_0^T \int V_1^\phi(t,y) W(dt, y) dy.
\end{align}
Define 
\begin{align*} 
N^\phi(t):=\int_0^t \int V_1^\phi(s,y) W(ds, y) dy, \quad \forall t\geq 0.
\end{align*}
 Then $\{N^\phi(t), t\geq 0\}$ is an $\cF_t^W$ martingale with quadratic variation given by
\begin{align*}  
\la N^\phi\ra_t= \int_0^t ds \int \int V_1^\phi(s,x) V_1^\phi(s,y) g(x,y) dx dy.
\end{align*}
By using the monotonicity of $s\mapsto V_1^\phi(s,x) V_1^\phi(s,y)$, one immediately gets that $\P$-a.s.,
\begin{align} \label{e2.5.1}
\lim_{t\to \infty} t^{-1}\la N^\phi\ra_t= \lim_{t\to \infty}  \int \int  V_1^\phi(t,x) V_1^\phi(t,y) g(x,y) dx dy=\xi(W,\phi).
\end{align}
The following lemma implies that the limit above is a.s. finite.
\begin{lemma}\label{l7.1}
If $d\geq 5$ and $\alpha>4$, we have
\[
\E[\xi(W,\phi)]<\infty.
\]
\end{lemma}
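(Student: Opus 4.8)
The plan is to push the limit inside the expectation using monotonicity, and then quote the uniform-in-$t$ bound already proved in Lemma~\ref{al5.1}(b). Recall from \eqref{ea6.6} that $t\mapsto V_1^\phi(t,x)$ is nonnegative and nondecreasing for every $x$, hence so is the product $t\mapsto V_1^\phi(t,x)V_1^\phi(t,y)$ for every $x,y\in\R^d$. Consequently the quantity
\[
Z_t:=\int_{\R^d}\int_{\R^d} V_1^\phi(t,x)V_1^\phi(t,y)\,g(x,y)\,dx\,dy
\]
is nondecreasing in $t$ (it is the integral of a nonnegative nondecreasing integrand against the positive measure $g(x,y)\,dx\,dy$), and $Z_t\uparrow \xi(W,\phi)$ a.s.\ as $t\to\infty$. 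By the monotone convergence theorem in $t$ together with Tonelli's theorem (all integrands being nonnegative), and then the second-moment formula \eqref{ea10.11} of Lemma~\ref{l2.2},
\begin{align*}
\E[\xi(W,\phi)]=\lim_{t\to\infty}\E[Z_t]=\lim_{t\to\infty}\int_{\R^d}\int_{\R^d}\E\big[V_1^\phi(t,x)V_1^\phi(t,y)\big]\,g(x,y)\,dx\,dy
=\lim_{t\to\infty}\int_{\R^d}\int_{\R^d}V_t^{\phi,\phi}(x,y)\,g(x,y)\,dx\,dy.
\end{align*}

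Next I would bound the integrand pointwise. By Corollary~\ref{ac4.6}, for all $t\geq 0$ and all $x,y\in\R^d$ we have $V_t^{\phi,\phi}(x,y)\leq C\,I_t(x)\,I_t(y)$ with $I_t$ as in \eqref{ae9.11}. Since the hypotheses $d\geq 5$ and $\alpha>4$ give $d\wedge\alpha>4$, Lemma~\ref{al5.1}(b) applies and yields a constant $C>0$, independent of $t$, such that
\[
\int_{\R^d}\int_{\R^d} I_t(x)\,I_t(y)\,g(x,y)\,dx\,dy\leq C,\qquad \forall\,t\geq 1.
\]
Combining the two displays above, $\int\int V_t^{\phi,\phi}(x,y)\,g(x,y)\,dx\,dy\leq C$ for every $t\geq 1$, and letting $t\to\infty$ we conclude $\E[\xi(W,\phi)]\leq C<\infty$, which is the assertion; in particular $\xi(W,\phi)<\infty$ a.s., as needed for \eqref{e2.5.1}.

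\textbf{Main obstacle.} There is no substantial obstacle: the two analytic inputs—the pointwise bound on $V_t^{\phi,\phi}$ and the uniform-in-$t$ integral estimate—are precisely Corollary~\ref{ac4.6} and Lemma~\ref{al5.1}(b), both established earlier. The only point deserving a line of justification is the interchange of limit, expectation, and spatial integration, which is legitimate here because every quantity involved is nonnegative and the relevant $t$-limits are monotone, so monotone convergence and Tonelli suffice; no uniform integrability argument is required.
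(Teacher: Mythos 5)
Your proof is correct and takes essentially the same approach as the paper: both rely on the pointwise bound $V_t^{\phi,\phi}(x,y)\leq C I_t(x)I_t(y)$ from Corollary~\ref{ac4.6} and the uniform-in-$t$ integral estimate from Lemma~\ref{al5.1}(b). The only cosmetic difference is that you invoke monotone convergence (exploiting the monotonicity of $V_1^\phi$ in $t$) where the paper simply applies Fatou's lemma; both routes close the argument identically.
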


\begin{proof}
Use Lemma \ref{ac4.6} to see that for any $t\geq 1$,
\begin{align*} 
\E\Big( \int \int  V_1^\phi(t,x) V_1^\phi(t,y) g(x,y) dx dy\Big) \leq C \int \int   I_t(x) I_t(y) g(x,y) dx dy \leq C,
\end{align*}
where the last inequality follows by Lemma \ref{al5.1} (b) as we are now in the scenario of the case $d\wedge \alpha>4$. Therefore, Fatou's Lemma gives that
\begin{align*} 
\E[\xi(W,\phi)] \leq \liminf_{t\to \infty} \E\Big( \int \int  V_1^\phi(t,x) V_1^\phi(t,y) g(x,y) dx dy\Big)  \leq C,
\end{align*}
as required.
\end{proof}

   By using the Dubins-Schwarz theorem (see, e.g., Revuz and Yor \cite{RY94}, Theorem V1.6 and V1.7), with an enlargement of the underlying probability space, we can construct some linear Brownian motion $(B_t, t\geq 0)$  such that
\begin{align}  
N^\phi(t)=B_{\la N^\phi\ra_t}.
\end{align}
It is clear that 
\begin{align}   \label{e2.5.2}
t^{-1/2}  N^\phi(t)=t^{-1/2}  B_{\la N^\phi\ra_t}\overset{d}{=} B_{t^{-1}\la N^\phi\ra_t}.
\end{align}
Now we are ready to give the proof of Proposition \ref{p1.1}.

\begin{proof}[Proof of Proposition \ref{p1.1}]

Combine \eqref{e2.5.0} and \eqref{e2.5.2} to see that it suffices to show that for any bounded and uniformly continuous function $h$, we have
\begin{align}  \label{ae4.21}
\lim_{t\to \infty} \E\Big[h\Big(B_{t^{-1}\la N^\phi\ra_t}\Big)\Big] = \E\Big[h\Big(B_{\xi(W,\phi)}\Big)\Big].
\end{align}

For any $\eps>0$, we choose $\delta>0$ such that $|h(x)-h(y)|<\eps$ holds for any $x,y\in \R$ with $|x-y|<\delta$. Then 
\begin{align*}  
 &\E\Big[\Big| h\Big(B_{t^{-1}\la N^\phi\ra_t}\Big)-h\Big(B_{\xi(W,\phi)}\Big)\Big|\Big]
  \leq \eps+2\|h\|_\infty \P\Big(\Big| B_{t^{-1}\la N^\phi\ra_t}-B_{\xi(W,\phi)}\Big|>\delta\Big)
\end{align*}
For any $\gamma \in (0,1)$, we have
\begin{align*}  
 & \P\Big(\Big| B_{t^{-1}\la N^\phi\ra_t}-B_{\xi(W,\phi)}\Big|>\delta\Big)\leq \P(|t^{-1}\la N^\phi\ra_t- \xi(W,\phi)|> \gamma)\\
 &+\P\Big(\Big| B_{t^{-1}\la N^\phi\ra_t}-B_{\xi(W,\phi)}\Big|>\delta, \ \Big|t^{-1}\la N^\phi\ra_t-\xi(W,\phi)\Big|\leq \gamma\Big)=:I_1+I_2.
\end{align*}
By using the a.s. convergence from \eqref{e2.5.1}, we get that when $t>0$ large, $I_1<\eps$.

To bound $I_2$, we first apply Lemma \ref{l7.1} to see that there is some $M>2$ such that
\begin{align*}  
 & \P(\xi(W,\phi)>M)<\eps.
\end{align*}
Then we have
\begin{align*}  
 I_2&\leq  \eps+\P\Big(\Big| B_{t^{-1}\la N^\phi\ra_t}-B_{\xi(W,\phi)}\Big|>\delta, |t^{-1}\la N^\phi\ra_t-\xi(W,\phi)|\leq \gamma, \xi(W,\phi)\leq M\Big) \\
 &\leq \eps+ \P\Big(\sup_{\substack{|s-t|\leq \gamma, \\0\leq s,t\leq M+1}} |B_s-B_t|>\delta\Big).
\end{align*}
By L\'evy's modulus of continuity, if we pick $\gamma$ small enough, then
\[
\P\Big(\sup_{\substack{|s-t|\leq \gamma, \\0\leq s,t\leq M+1}} |B_s-B_t|>\delta\Big)\leq \eps.
\]
Now we conclude that
\begin{align*}  
 &\E\Big[\Big| h\Big(B_{t^{-1}\la N^\phi\ra_t}\Big)-h\Big(B_{\xi(W,\phi)}\Big)\Big|\Big]
  \leq \eps+2\|h\|_\infty  (3\eps).
\end{align*}
Since $\eps>0$ is arbitrary, we finish the proof of \eqref{ae4.21}.
\end{proof}

 \bibliographystyle{plain}
\def\cprime{$'$}

 \appendix
 
 \section{Proof of Lemma \ref{l1.1}} \label{a1}

\begin{proof} [Proof of Lemma \ref{l1.1}]
 Fix any $\alpha>2$, $q>0$ and $d\geq 3$.
Define
\[
\bar{g}(z)=\eps (|z|^{-\alpha} \wedge 1), \quad \forall z\in \R^d,
\]
for some $\eps>0$.
By assumption \eqref{e0.0}, we have for any $t>0$ and $x,y\in \R^d$,
\begin{align}\label{ae6.71}
\Pi_{(x,y)} \Big(e^{q\int_0^t   g(B_s, \widetilde{B}_s) ds} \Big) \leq \Pi_{(x,y)} \Big(e^{q \int_0^t   \bar{g}(B_s-\widetilde{B}_s) ds} \Big)=E_{x-y} \Big(e^{q \int_0^t   \bar{g}(\beta_s) ds} \Big),
\end{align}
where $\beta=(\beta_t, t\geq 0)$ is a $d$-dimensional Brownian motion starting from $z\in \R^d$ under $E_z$ such that $Var(\beta_t)=2t$. 
One can check that $(q\int_0^t   \bar{g}(\beta_s)ds, {t\ge 0})$ is an additive functional of the Brownian motion $\beta$ (see Page 935 of \cite{Ying} for the precise definition). By applying Lemma 2.1 of \cite{Ying}, we get that for any $t>0$, if 
\begin{align}\label{ea4.5}
\sup_{z\in \R^d} E_{z} \Big( q \int_0^s   \bar{g}(\beta_s) ds \Big) \leq \frac{1}{2}, \quad \forall s<t,
\end{align}
then it follows that
\begin{align*}
\sup_{z\in \R^d} E_{z}  \Big(e^{q \int_0^t   \bar{g}(\beta_s) ds} \Big) \leq (1-\frac{1}{2})^{-1}=2.
\end{align*}

Once confirming \eqref{ea4.5}, we may easily conclude from \eqref{ae6.71} that for all $t>0$ and $x,y\in \R^d$, we have
\begin{align*} 
\Pi_{(x,y)} \Big(e^{q\int_0^t   g(B_s, \widetilde{B}_s) ds} \Big) \leq 2.
\end{align*}
The conclusion follows by letting $t\to \infty$ and taking supremum over all $x,y$. \\

It remains to prove \eqref{ea4.5}. To see this, we note that
\begin{align*} 
  E_{z} \Big( q \int_0^\infty \bar{g}(\beta_s) ds \Big)=q \int_{\R^d} \bar{g}(y) G(z,y) dy,
\end{align*}
where $G(z,y)$ is the Green function of $(\beta_t)$ given by
\begin{align*} 
 G(z,y)=\int_0^\infty p_{2s}(z,y) ds =C |z-y|^{2-d}.
\end{align*}
Therefore for any $z\in \R^d$, we have
\begin{align*} 
  &q \int_{\R^d} \bar{g}(y) G(z,y) dy= C \eps q  \int_{\R^d} (|y|^{-\alpha} \wedge 1) |z-y|^{2-d} dy\\
 &\leq C\eps q\int_{\R^d} (|y|^{-\alpha} \wedge 1) |y|^{2-d} dy \\
 &\leq  C \eps q \Big[C  +C \int_1^\infty r^{2-d-\alpha} r^{d-1} dr \Big]   \leq C(\alpha, d) \eps q,
\end{align*}  
where the first inequality follows by Lemma 3.6 of Sugitani \cite{Sug89}, and the last inequality follows by $\alpha>2$. 
By letting $\eps\leq  \frac{1}{2C(\alpha, d) q}$, we obtain that
\begin{align} 
\sup_{z\in \R^d}   E_{z} \Big( q \int_0^\infty \bar{g}(\beta_s) ds \Big) \leq \frac{1}{2},
\end{align}
as required.
\end{proof}

 \section{Generalized Gronwall's inequality} \label{a2}
 
This section will give the proof of Lemma \ref{l4.3}.
 
\begin{proof} [Proof of Lemma \ref{l4.3}]

 Define $H(t,x)= G(t,x)-F(t,x)$ for each $t\geq 0$ and $x\in \R^d$. By assumption we have for any $t\geq 0$ and $x\in {\R^d}$,
 \begin{align*}
 H(t,x)\leq   \int_0^t ds \int_{\R^d} p_{t-s}(x,y) H(s,y)dy.
 \end{align*}
 Set $H^+(t,x)=H(t,x) \vee 0$. Then the above implies that
   \begin{align*}
 H^+(t,x)\leq   \int_0^t ds \int_{\R^d} p_{t-s}(x,y) H^+(s,y)dy.
 \end{align*}
 Define $$h_t= \int_{\R^d} H^+(t,x) dx$$ so that
   \begin{align}\label{e5.44}
h_t &\leq   \int_{\R^d} \int_0^t ds \int_{\R^d} p_{t-s}(x,y) H^+(s,y)dy dx\nn\\
&= \int_0^t ds \int_{\R^d} H^+(s,y)dy=  \int_0^t h_s ds,
 \end{align}
 where the first equality follows by Fubini's theorem. Apply Gronwall's inequality (see, e.g., Theorem 5.1 in Appendix of Ethier-Kurtz \cite{EK86}) with \eqref{e5.44} to see that
 \begin{align*} 
h_t=0, \quad \forall t\geq 0.
 \end{align*}
 So for any $t\geq 0$, $H^+(t,x)=0$  for a.e. $x \in  {\R^d}$. By the continuity of $x\mapsto H^+(t,x)$, we get $H^+(t,x)=0$  for all $x \in  {\R^d}$, thus giving \eqref{e5.45}.
\end{proof}

\section{Fourth moment bounds} \label{ac}

In this section, we will prove Lemma \ref{al6.3}.
Recall from \eqref{ae10.33} that
 \begin{align} 
dV_1^{\phi}(t,x)=\phi(x)+  \frac{\Delta}{2} V_1^{\phi}(t,x) dt   +  V_1^{\phi}(t,x)   {W}(dt,x).
\end{align}
For any $x_1, x_2\in \R^d$, by Ito's formula, one can check that
\begin{align*} 
 & d(V_1^{\phi}(t, x_1)V_1^{\phi}(t, x_2)) =    \frac{\Delta_{(x_1,x_2)}}{2} V_1^{\phi}(t, x_1) V_1^{\phi}(t, x_2) dt\\
 &+g(x_1,x_2) V_1^{\phi}(t, x_1)V_1^{\phi}(t, x_2) dt+V_1^{\phi}(t, x_1) V_1^{\phi}(t, x_2) \Big[{W}(dt,x_1)+ {W}(dt,x_2)\Big]\\
 &+\Big[\phi(x_1) V_1^{\phi}(t, x_2) +\phi(x_2) V_1^{\phi}(t, x_1)\Big] dt.
 \end{align*}
Similarly, one can check by induction that for any $n\geq 1$ and any $x_1$, $\cdots$, $x_n \in \R^d$,
 \begin{align*} 
d\prod_{i=1}^n V_1^{\phi}(t,x_i)&= \frac{\Delta_{(x_1, \cdots, x_n)}}{2} \prod_{i=1}^n V_1^{\phi}(t,x_i) dt   +  \prod_{i=1}^n V_1^{\phi}(t,x_i) \Big[ \sum_{1\leq i< j \leq n} g(x_i, x_j)\Big] dt\\
&+ \prod_{i=1}^n V_1^{\phi}(t,x_i)  \Big[  \sum_{i=1}^n {W}(dt,x_i)\Big]+\sum_{k=1}^n \phi(x_k) \prod_{1\leq i\neq k \leq n} V_1^{\phi}(t,x_i) dt.
\end{align*}
 Take expectation to see that
  \begin{align*} 
&d \E\Big[\prod_{i=1}^n V_1^{\phi}(t,x_i)\Big]= \frac{\Delta_{(x_1, \cdots, x_n)}}{2} \E\Big[\prod_{i=1}^n V_1^{\phi}(t,x_i)\Big] dt\\
&+\E\Big[\prod_{i=1}^n V_1^{\phi}(t,x_i)\Big] \Big[ \sum_{1\leq i< j \leq n} g(x_i, x_j)\Big] dt    +\sum_{k=1}^n \phi(x_k) \E\Big[\prod_{1\leq i\neq k \leq n} V_1^{\phi}(t,x_i)\Big] dt.
\end{align*}
 Using Feymann-Kac's formula, we obtain that
   \begin{align} \label{ae10.31}
 \E\Big[\prod_{i=1}^n V_1^{\phi}(t,x_i)\Big]= \int_0^t \Pi_{(x_1, \cdots, x_n)} \Bigg\{ &\sum_{k=1}^n \phi(B_s^k) \E\Big[\prod_{1\leq i\neq k \leq n} V_1^{\phi}(t-s,B_s^i)\Big] \nn\\
&e^{\int_0^s\sum_{1\leq i< j \leq n} g(B_r^i, B_r^j) dr}\Bigg\}ds.
\end{align}
In the above, we let $\{B_s^k, s\geq 0\}_{1\leq k\leq n}$ be independent $d$-dimensional Brownian motion starting respectively from $(x_k, 1\leq k\leq n)$ under $\Pi_{(x_1, \cdots, x_n)}.$\\

We are ready to give the proof of Lemma \ref{al6.3}.
\begin{proof}[Proof of Lemma \ref{al6.3}]
Set $n=3$ in \eqref{ae10.31} to see that
  \begin{align} \label{ae9.14}
 \E\Big[\prod_{i=1}^3 V_1^{\phi}(t,x_i)\Big]= \int_0^t \sum_{k=1}^3 \Pi_{(x_1, x_2, x_3)} \Bigg\{ & \phi(B_s^k) \E\Big[\prod_{1\leq i\neq k \leq 3} V_1^{\phi}(t-s,B_s^i)\Big] \nn\\
&e^{\int_0^s\sum_{1\leq i< j \leq 3} g(B_r^i, B_r^j) dr}\Bigg\}ds.
\end{align}
Consider the case for $k=1$ above, and let
 \begin{align*} 
 I&:=\int_0^t ds\Pi_{(x_1, x_2, x_3)}  \Bigg\{  \phi(B_s^1) \E\Big[V_1^{\phi}(t-s,B_s^2)V_1^{\phi}(t-s,B_s^3)\Big] e^{\int_0^s\sum_{1\leq i< j \leq 3} g(B_r^i, B_r^j) dr}\Bigg\}.
 \end{align*}
Lemma \ref{l2.2} and Lemma \ref{al4.6} give that
 \begin{align*} 
 \E\Big[  V_1^{\phi}(t-s,B_s^2)V_1^{\phi}(t-s,B_s^3)\Big] \leq C  \cdot \widetilde{Q}(t-s,B_s^2)   \widetilde{Q}(t-s,B_s^3).
 \end{align*}
 Plug in the above and recall the definition of $\widetilde{Q}(t,x)$ from \eqref{ae9.41} to see that
  \begin{align} \label{ae9.13}
 I& \leq C\int_0^t ds \int_{0}^{t-s} dl \int_{0}^{t-s}  du \ \Pi_{(x_1, x_2, x_3)} \Bigg\{  \phi(B_s^1)  \Big[\int_{|z_2|\leq K} p_u(B_s^2, z_2) dz_2\Big]^{1/p}\nn \\
&\quad \quad\quad\quad\quad \Big[\int_{|z_3|\leq K} p_l(B_s^3, z_3) dz_3\Big]^{1/p} e^{\int_0^s\sum_{1\leq i< j \leq 3} g(B_r^i, B_r^j) dr}\Bigg\}\nn\\
&\leq C\int_0^t ds \int_{0}^{t-s} dl \int_{0}^{t-s}  du \ \Big(\Pi_{(x_1, x_2, x_3)}  \Big[e^{q\int_0^s\sum_{1\leq i< j \leq 3}   g(B_r^i, B_r^j) dr}\Big] \Big)^{1/q} \nn \\
&\quad \quad\quad\quad\quad \Big(\Pi_{(x_1, x_2, x_3)} \Big[ \phi(B_s^1)^p  \int_{|z_2|\leq K} p_u(B_s^2, z_2) dz_2 \int_{|z_3|\leq K} p_l(B_s^3, z_3) dz_3\Big]\Big)^{1/p}.
  \end{align}
  The last inequality above follows by  H\"older's inequality.\\
  
Apply a generalized H\"older's inequality (see, e.g., Exercise 6 of Chapter 8 in \cite{WZ15}) to see that 
 \begin{align*}
&  \Pi_{(x_1, x_2, x_3)}  \Big[e^{q\int_0^s\sum_{1\leq i< j \leq 3}  g(B_r^i, B_r^j) dr}\Big] 
\leq  \prod_{1\leq i< j \leq 3} \Big( \Pi_{(x_1, x_2, x_3)}  \Big[e^{3q\int_0^s  g(B_r^i, B_r^j) dr}\Big] \Big)^{1/3}\leq 2,
\end{align*}
where the last inequality follows from  Lemma \ref{l1.1} with $q=3q.$ Hence \eqref{ae9.13} becomes
  \begin{align*}  
 I& \leq C \int_0^t ds\int_{0}^{t-s} dl \int_{0}^{t-s}  du  \ \Big(\int_{|z_1|\leq K} p_s(x_1, z_1)   dz_1 \\
 &\quad \quad\quad\quad\quad \int_{|z|\leq K} p_{s+u}(x_2, z_2) dz_2 \int_{|z_3|\leq K} p_{s+l}(x_3, z_3) dz_3 \Big)^{1/p}\\
 &\leq C \cdot \widetilde{Q}(t, x_1)  \widetilde{Q}(t, x_2) \widetilde{Q}(t, x_3).
   \end{align*}
 Returning to \eqref{ae9.14}, by symmetry we may conclude that
  \begin{align} \label{ae9.15}
 \E\Big[\prod_{i=1}^3 V_1^{\phi}(t,x_i)\Big]\leq C \cdot \widetilde{Q}(t, x_1)  \widetilde{Q}(t, x_2) \widetilde{Q}(t, x_3).
\end{align}

Set $n=4$ in \eqref{ae10.31}  to see that
  \begin{align} \label{ae9.16}
 \E\Big[\prod_{i=1}^4 V_1^{\phi}(t,x_i)\Big]= \int_0^t \sum_{k=1}^4 \Pi_{(x_1, x_2, x_3)} \Bigg\{ & \phi(B_s^k) \E\Big[\prod_{1\leq i\neq k \leq 4} V_1^{\phi}(t-s,B_s^i)\Big] \nn\\
&e^{\int_0^s\sum_{1\leq i< j \leq 4} g(B_r^i, B_r^j) dr}\Bigg\}ds.
\end{align}
By using \eqref{ae9.15} and \eqref{ae9.16}, one may repeat the above arguments for $n=3$ to get that 
  \begin{align*}  
 \E\Big[\prod_{i=1}^4 V_1^{\phi}(t,x_i)\Big]\leq C \cdot \widetilde{Q}(t, x_1)  \widetilde{Q}(t, x_2) \widetilde{Q}(t, x_3) \widetilde{Q}(t, x_4).
\end{align*}
The proof is now complete.
  \end{proof}

\end{document}